\numberwithin{equation}{section}
\title{Non-Autonomous Conformal Graph Directed Markov Systems}
\author{Jason Atnip}
\address{Department of Mathematics, University of North Texas, Denton, TX 76203-1430, USA}
\email{jason.atnip@unt.edu  \hspace*{0.42cm} \it Web: \rm http://atnipmath.com}
\date{}
\begin{document}
\begin{abstract}
	In this paper we introduce and develop the theory of non-autonomous graph directed Markov systems which is a generalization of the theory of conformal graph directed Markov systems of Mauldin and Urba\'nski, first presented in their book \cite{mauldin_graph_2003}, and the theory of non-autonomous conformal iterated function systems set forth by Rempe-Gillen and Urba\'nski in \cite{rempe-gillen_non-autonomous_2016}. We exhibit several large classes of functions for which Bowen's formula for Hausdorff dimension holds. In particular we consider weakly balanced finite systems, where we have some control over the growth of the derivatives, and ascending systems. Our results, particularly for ascending systems, generalize and go well beyond what is currently known for autonomous graph directed Markov systems and non-autonomous iterated function systems. We also provide an application to non-autonomous conformal dynamics by estimating the Hausdorff dimension of the Julia set of non-autonomous affine perturbations of an elliptic function from below.
\end{abstract}

\maketitle

\tableofcontents

\section{Introduction}
	The theory of iterated function systems and their many generalizations has a rich history dating back as far as the late 1940's to Moran  
	\cite{moran_additive_1946}, while their contemporary construction	begins with Hutchinson's 1981 paper, \cite{hutchinson_fractals_1981}. In the classical theory of iterated function systems one considers a finite collection of uniformly contracting similarity maps on a compact subset of Euclidean space. This theory was then expanded to consider infinite collections of maps which are allowed to be conformal mappings as opposed to only considering similarities. A map $\phi$ defined on an open connected subset of Euclidean space is said to be conformal if its derivative is a similarity map at each point. See \cite{mauldin_dimensions_1996} for a detailed development of this theory, which paved the way for conformal graph directed systems. 
	Graph directed systems were first pioneered by Mauldin and Williams in \cite{mauldin_hausdorff_1988} and then by Edgar and Mauldin in \cite{edgar_multifractal_1992}. The book of Mauldin and Urba\'nski on conformal graph directed Markov systems, \cite{mauldin_graph_2003}, represents the full generality of such autonomous systems. Recently, work has been done to extend the theory of autonomous conformal graph directed Markov systems to Carnot groups in \cite{chousionis_conformal_2016}. Conformal graph directed Markov systems have also been considered in the random setting by Roy and Urba\'nski in \cite{roy_random_2011}.
	
	The main focus of this article is to generalize the notion of non-autonomous conformal iterated function systems (NCIFS) first developed in \cite{rempe-gillen_non-autonomous_2016} by Rempe-Gillen and Urba\'nski to the setting of non-autonomous conformal graph directed Markov systems. In an autonomous system we consider a fixed collection of contraction mappings, and then look at all admissible compositions to define a limit set. In a non-autonomous system, we again look at the limit set generated by admissible compositions of contraction mappings, however we no longer consider a fixed set of mappings. The collections of mappings are in fact allowed to change at each time step. In our article we go beyond the generality of non-autonomous systems as presented in \cite{rempe-gillen_non-autonomous_2016} and allow for mappings whose domain and codomain are not the same space and allowed to change with each time step. In particular, our construction allows for a countable collection of compact connected subsets of Euclidean space rather than the finite collection of such spaces considered with autonomous graph directed Markov systems. We would also like to mention that, as a consequence of this increased generality, in this article we require many sequences of constants that were previously fixed values in lesser generality to be subexponential in growth, e.g. $\lim_{n\to\infty}1/n\log \#V_n=0$ where $V_n$ is the collection of vertices at time $n$. Some such constants are required in \cite{rempe-gillen_non-autonomous_2016}, but we will require many more here.     

	In this article we seek to prove that the Hausdorff dimension of the limit set of a system $\Phi$ is equal to the unique ``zero", $B_\Phi$, of the pressure function. Such a formula is often referred to as Bowen's formula, being first discovered by Rufus Bowen for the context of quasi-Fuchsian groups in 1979 in \cite{bowen_hausdorff_1979}. Since its conception Bowen's formula,
\begin{align*}
	\HD(J_\Phi)=B_\Phi,
\end{align*}
has been shown for all finite and infinite autonomous, as well as random, conformal iterated functions systems, and their generalizations graph directed Markov systems. See, for example, \cite{mauldin_graph_2003} for the autonomous setting and \cite{roy_random_2011} for the random setting. However, this is not the case for non-autonomous systems. In fact, there are examples of finite non-autonomous conformal iterated function systems for which Bowen's formula does not hold. For a general finite system we will need some sort of control over the growth rate of the alphabets at each time step. Specifically, Rempe-Gillen and Urba\'nski showed that, for finite systems, if $\lim_{n\to\infty}1/n \log\#I^{(n)}=0$, i.e. the system is subexponentially bounded, then Bowen's formula holds. For non-autonomous graph directed systems we obtain a similar result, however, we will need an additional balancing condition on the growth of the derivatives and a condition on the incidence matrices that allows for sufficiently many admissible words, whose definitions will be given in Section \ref{Sec: Subexp}. 
The main result of our paper is the following. 
\begin{theorem}\label{MainThm}
	If $\Phi$ is a finitely primitive non-autonomous conformal graph directed Markov system of subexponential growth
	\begin{align*}
		\limty{n}\frac{1}{n}\log\#I^{(n)}=0,
	\end{align*} 
	 such that
	 \begin{align*}
		 \limty{n}\frac{1}{n}\log\sup_{a,b\in I^{(n)}}\frac{\norm{D\phi_a^{(n)}}}{\norm{D\phi_b^{(n)}}}=0,
	 \end{align*}
	 then Bowen's formula holds, i.e. $\HD(J_\Phi)=B_\Phi$.
\end{theorem}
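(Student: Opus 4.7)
The plan is to prove Bowen's formula by establishing the two inequalities $\HD(J_\Phi)\le B_\Phi$ and $\HD(J_\Phi)\ge B_\Phi$ separately, in the spirit of \cite{rempe-gillen_non-autonomous_2016} but adapted to the graph-directed setting where both the vertex set and the alphabet vary in time.

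For the upper bound I would argue by a direct Hausdorff covering. Fix $t>B_\Phi$. By definition of $B_\Phi$ as the zero of the pressure, the partition sums over length-$n$ admissible words $\omega$ of $\norm{D\phi_\omega}^{t}$ decay exponentially in $n$ modulo subexponential corrections. By the bounded distortion estimate established in the preceding sections, the images $\phi_\omega(X)$ form natural covers of $J_\Phi$ with diameters comparable to $\norm{D\phi_\omega}$, so these sums dominate the $t$-dimensional Hausdorff pre-measure up to a multiplicative constant. Together with the hypothesis $\limty{n}(1/n)\log\#I^{(n)}=0$, which keeps the number of covering pieces at each level subexponential, one concludes $\mathcal{H}^t(J_\Phi)=0$ and hence $\HD(J_\Phi)\le t$; letting $t\downarrow B_\Phi$ finishes this direction.

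The main difficulty lies in the lower bound. Fix $s<B_\Phi$. I would construct a Borel probability measure $\mu_s$ supported on (a truncation of) $J_\Phi$ with a controlled local mass decay and then invoke the mass distribution principle. The natural construction is by truncation: for large $N$, place mass on length-$N$ admissible words $\omega$ proportional to $\norm{D\phi_\omega}^s$, push forward to $J_\Phi$ via the coding map, and pass to a weak-$*$ subsequential limit as $N\to\infty$. Finite primitivity enters twice: first, to guarantee enough admissible continuations that the partition sums exhibit the correct exponential growth rate; second, to supply bounded-length connector words allowing one to compare masses of different cylinders up to a universal constant.

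The hardest step, and the likely main obstacle, is showing that the limiting measure satisfies $\mu_s(B(x,r))\le C_n\, r^{s}$ with constants $C_n$ that grow at most subexponentially in the word length naturally associated to the radius $r$. This is precisely where the derivative balancing hypothesis becomes indispensable: without it, a single letter $a\in I^{(n)}$ with anomalously small derivative could, after iteration, capture a disproportionate share of the mass and produce points of strictly smaller local dimension, defeating the mass distribution principle. Under the balancing hypothesis, the mass of each cylinder $[\omega|_n]$ is comparable to $\norm{D\phi_{\omega|_n}}^s$ up to a factor that is subexponential in $n$; combined with the subexponential cardinalities and bounded distortion, this yields the desired local estimate and hence $\HD(J_\Phi)\ge s$. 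Letting $s\uparrow B_\Phi$ then completes the proof.
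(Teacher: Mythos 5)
Your overall route is the same as the paper's: the upper bound via the natural covers $Y_\om$, $\om\in I^n$ (this is Lemma \ref{lem:HDlessB}; note it needs only the Diameter Condition, not subexponential boundedness, which you invoke there unnecessarily), and the lower bound via measures that assign to each level-$j$ cylinder mass proportional to $\norm{D\phi_\om}^t$, a weak-$*$ limit, finite primitivity to splice words across times, and the mass distribution principle. So the architecture is right; the problem is that the step you correctly identify as the hardest one is asserted rather than carried out, and the form in which you assert it would not close.

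Concretely: the claim that the limit measure gives each cylinder $[\om|_n]$ mass comparable to $\norm{D\phi_{\om|_n}}^s$ up to a subexponential factor cannot be literally true, since the measures are probability measures while $\sum_{\om\in I^n}\norm{D\phi_\om}^s=Z_n(s)$ grows exponentially for $s<B_\Phi$; what the paper proves (using the connector words, the constant $Q$ from finite primitivity, and the subexponential growth of $\ol{G}_n$ supplied by Corollary \ref{cor:subexp bdd implies sub G_n bdd}) is $m_j(Y_\om)\leq\const\, e^{\ep n}\norm{D\phi_\om}^s/Z_n(s)$, and the exponentially large denominator is essential, not a correction term. Moreover, passing from cylinders to a ball $B(x,r)$ is not a matter of ``subexponential cardinalities and bounded distortion.'' Stopping at words with $\diam(Y_\om)\geq r>\diam(Y_{\om a})$ and converting $r$ into derivative information produces a factor of order $\ul{c}_{n+1}^{-s}$, which is not subexponential in general and is not controlled by the balancing hypothesis (that controls only the ratio $\rho_n=\ol{c}_n/\ul{c}_n$, not $\ul{c}_n$ itself); the paper absorbs it through $Z_{n+1}(t)\geq K^{-2t}\ul{c}_{n+1}^t\,\ul{G}_n\,Z_n(t)$, i.e.\ by working with the modified partition function $\widetilde{Z}_{n+1}(t)=Z_n(t)(\ul{G}_n)^{t/d}\ul{c}_{n+1}^t\ul{d}_{n+1}^t$ of Theorem \ref{thm:LBforHDJv}, together with the separate bridging step (Corollary \ref{cor:BFwbs}, via \eqref{eqn from cor 7.3}) showing that weak balancing plus $\lim_{n\to\infty}\frac1n\log\ol{G}_n=0$ forces $\widetilde{P}(t)\geq\ul{P}(t)>0$. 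Finally, bounding $\mu(B(x,r))$ also requires two genuinely geometric inputs you omit: the Uniform Cone Condition (through \eqref{cone containment}) to bound the number of pairwise disjoint cylinders of diameter at least $r$ meeting $B$, and a bound on the spread of their word-lengths, which is exactly where the terms $1+\log\rho_{k+1}+\sup_k\log(\ol{d}_{n+k}/\ul{d}_{n+1})$ in the definition of $\kp(t)$ enter; bounded distortion alone does not provide either, especially in the non-stationary graph-directed setting where the spaces $X_v^{(n)}$ vary. Supplying these ingredients is the content of Theorem \ref{thm:LBforHDJv} and Corollary \ref{cor:BFwbs}, and without them your outline does not yet constitute a proof.
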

In general we are unable to remove the subexponential growth restriction on the alphabets without imposing other restrictive assumptions. In fact, for any $\ep>0$ and $0<s<t<d$, Rempe-Gillen and Urba\'nski construct a finite non-autonomous conformal iterated function system $\Phi$ such that 
\begin{align*}
	\limsupty{n}\frac{1}{n}\log\#I^{(n)}\leq \ep,
\end{align*} 
$\HD(J_\Phi)=s$, and $B_\Phi=t$.
The remainder of the article is devoted to removing any or all of these conditions on the growth rate of the alphabet or size of the derivatives. In particular we introduce a class of ascending systems for which $I^{(n)}\sub I^{(n+1)}$ for each $n\in\NN$ so that at each time step we consider more and more maps without losing any information from the previous stages. We then show that Bowen's formula holds for all finite and infinite systems without requiring any restrictions on the derivatives or alphabets. Specifically we show the following. 
\begin{theorem}
	If $\Phi$ is a finitely primitive, ascending non-autonomous conformal graph directed Markov system, then Bowen's formula holds. 
\end{theorem}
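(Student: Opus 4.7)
The strategy is to reduce the theorem to the Main Theorem (Theorem \ref{MainThm}) by exploiting the ascending inclusions $I^{(n)}\subseteq I^{(n+1)}$ to extract sub-systems of $\Phi$ whose alphabets and derivatives are well enough controlled to satisfy the hypotheses of that theorem, while still carrying enough of the pressure of $\Phi$ to recover its Hausdorff dimension in the limit. The upper bound $\HD(J_\Phi)\leq B_\Phi$ should be entirely routine: the standard cylinder covering of $J_\Phi$ at word-length $n$, combined with the definition of $B_\Phi$ as the zero of the pressure function and subadditivity of the partition-sum, does not require either subexponential growth or derivative balance and should transfer essentially verbatim from the argument that precedes the Main Theorem.

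The work is in proving the lower bound $\HD(J_\Phi)\geq B_\Phi$. I would fix $s<B_\Phi$ and seek a sub-system $\Phi'$ of $\Phi$ such that (i) $J_{\Phi'}\subseteq J_\Phi$, (ii) $\Phi'$ satisfies the two subexponential hypotheses of Theorem \ref{MainThm}, and (iii) $B_{\Phi'}\geq s$. The natural candidate is obtained by choosing a large time $N$ and a finite set $F\subseteq I^{(N)}$, then defining $\Phi'$ as the shift of $\Phi$ by $N$ restricted at each subsequent level to letters from $F$. The ascending hypothesis is exactly what makes this construction legitimate: $F\subseteq I^{(n)}$ for all $n\geq N$, so the maps $\phi_a^{(n)}$ with $a\in F$ are all available and the restricted system has constant finite alphabet $F$, trivially of subexponential growth. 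To arrange (iii) I would approximate the full pressure $B_\Phi$ from below by partition sums supported on finitely many letters at some fixed level, then promote those letters to $F$, using finite primitivity to control admissibility of long concatenations.

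The main obstacle is verifying the derivative balance hypothesis for $\Phi'$: even with $F$ finite, there is no reason a priori that $\sup_{a,b\in F}\|D\phi_a^{(n)}\|/\|D\phi_b^{(n)}\|$ should grow subexponentially in $n$, since the underlying maps themselves are time-dependent. To defeat this, I would run a second round of thinning. At each level $n$, group the letters of $F$ into finitely many classes according to the integer part of $\log\|D\phi^{(n)}_a\|$ coarsened to intervals of fixed length; a pigeonhole argument, applied along an infinite set of levels of positive density, selects a sub-alphabet $F'\subseteq F$ whose members have mutually comparable derivatives up to a universal multiplicative constant on those levels. On the remaining levels, where $F'$ might be too poor, I would use the finitely primitive assumption to splice in bounded-length admissible connector words at a cost that is $o(n)$ in the logarithmic scale, thereby preserving the pressure up to an error tending to zero. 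Carrying this through produces $\Phi'$ satisfying both hypotheses of the Main Theorem with $B_{\Phi'}\geq s-o(1)$, whence $\HD(J_\Phi)\geq\HD(J_{\Phi'})=B_{\Phi'}\geq s-o(1)$, and letting $s\uparrow B_\Phi$ concludes the argument.

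I expect the truly delicate point to be that the pigeonhole/splicing procedure must be compatible with the graph-directed structure: connector words must respect the incidence matrices and move between the correct vertices, and the resulting sub-system must still be finitely primitive and satisfy the separation conditions needed to run Theorem \ref{MainThm}. Verifying these combinatorial and geometric compatibility requirements cleanly, and showing that the derivative-balance condition really can be enforced without destroying pressure, is where most of the technical effort will go.
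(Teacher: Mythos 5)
Your overall strategy — use the ascending inclusions to extract a sub-system with a constant finite alphabet, splice in connector words via finite primitivity, and show the pressure of the sub-system approaches that of $\Phi$ as the cut-off increases — is essentially what the paper does. But there is a genuine misreading of the hypothesis that leads you to manufacture a problem the paper never has to face, and then to solve it with machinery (pigeonholing on derivative scales) that is both unnecessary and, as proposed, not actually worked out.

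The misreading is of condition (3) in the definition of \emph{ascending}: $\phi_i^{(n)}=\phi_i^{(n+1)}$ for each $i\in I^{(n)}$. Once a letter $i$ has entered the alphabet, its associated map never changes again. You write that ``even with $F$ finite, there is no reason a priori that $\sup_{a,b\in F}\|D\phi_a^{(n)}\|/\|D\phi_b^{(n)}\|$ should grow subexponentially in $n$, since the underlying maps themselves are time-dependent.'' For an ascending system they are \emph{not} time-dependent on a fixed finite $F\subseteq I^{(N)}$: for all $n\geq N$ and $a\in F$, $\phi_a^{(n)}=\phi_a^{(N)}$, and likewise condition (4) freezes the incidence matrix entries and condition (1) freezes the relevant spaces. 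The ratio you are worried about is therefore constant in $n$, and your entire pigeonhole argument on density-positive sets of levels, together with the splicing-on-the-remaining-levels step, is addressing a non-problem. Since that argument is the part of your proposal you flagged as the delicate point and did not carry out, the proposal as written does not actually close the gap, even though the gap turns out to be illusory.

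The second consequence of the misreading is that you aim at the wrong target lemma. Once you see that the maps, spaces, and admissibility do not vary with time on a fixed finite alphabet, the resulting sub-system is not merely a subexponentially bounded, weakly balanced NCGDMS — it is a genuinely \emph{autonomous} conformal IFS, for which Bowen's formula is classical and needs none of the hypotheses of the Main Theorem. This is exactly what the paper does: for each $\ell>p$ it picks a letter $a_\ell\in I^{(1)}$ maximizing the partition sum at length $\ell$, attaches a length-$p$ connector word $\lm\in\Lambda_{a_\ell}^*$ returning to $a_\ell$ (finite primitivity gives admissibility, and the lower bound $Q$ on $\|D\phi_\lm\|$ gives control of the cost), and takes the alphabet $E_\ell$ of all admissible words $\om\lm$ of length $\ell+p$ starting and ending at $a_\ell$. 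Because of the ascending conditions, the same block can be iterated indefinitely, yielding an autonomous IFS $S_\ell$ on $X_{i(a_\ell)}^{(0)}$ with $Z_m^{S_\ell}(t)\geq K^{-mt}e^{m\ell(\ul P(t)-\ep)}$, hence $B_{S_\ell}\to B_\Phi$ and $\HD(J_\Phi)\geq\HD(J_{S_\ell})=B_{S_\ell}$. So: correct spirit (thin to a constant finite alphabet, use connectors, let the cut-off grow), but you should replace ``extract a sub-system satisfying Theorem \ref{MainThm}'' with ``extract an \emph{autonomous} IFS sub-system and invoke classical Bowen's formula,'' and drop the derivative-balance pigeonholing entirely; it rests on a misreading of the ascending condition and does nothing for you.
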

In the final section we present an application of our theory to non-autonomous conformal dynamics. We show that given a particular elliptic function then for affine perturbations sufficiently close to the original function then the non-autonomous Julia set has dimension greater than or equal to some positive number depending on the original function. Specifically we show the following.
\begin{theorem}
	Let $f_0$ be an elliptic function. Then there are $\ep,\dl>0$ such that if $\lm_n, \lm_n^{-1}\in B(1,\dl)$ and $c_n\in B(0,\ep)$ for all $n\in\NN$ then 
	\begin{align*}
	\HD(\jl(F_{\lm,c}))\geq \frac{2q}{q+1}
	\end{align*}
	where $q$ is the maximum multiplicity of the poles of $f_0$ and $F_{\lm,c}$ is defined to have iterates 
	\begin{align*}
		F^n_{\lm,c}=f_n\circ\dots\circ f_1, \text{ with } f_n=\lm_n\cdot f_0+c_n
	\end{align*}
	for each $n\in\NN$.
	
\end{theorem}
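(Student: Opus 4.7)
The approach is to construct an ascending non-autonomous conformal graph directed Markov system $\Phi$ out of inverse branches of the maps $f_n$ near poles of $f_0$, whose limit set $J_\Phi$ sits inside $\jl(F_{\lm,c})$. The ascending systems theorem stated above then gives Bowen's formula for $\Phi$, so $\HD(\jl(F_{\lm,c})) \ge \HD(J_\Phi) = B_\Phi$, and the problem reduces to proving $B_\Phi \ge \frac{2q}{q+1}$.

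Since $f_n = \lm_n f_0 + c_n$, every $f_n$ has exactly the same poles as $f_0$ and with the same orders; only the principal coefficient at each pole is multiplied by $\lm_n$. Choose a large annulus $V = \{R_0 \le |z| \le 2R_0\}$ avoiding the critical values of $f_0$, which, for $\ep$ and $\dl$ sufficiently small, also avoids the critical values of every $f_n$ uniformly in $n$. Enumerate the maximal-order poles $\{b_j\}_{j=1}^J$ of $f_0$ lying in $V$ and, at each time $n$, let $\phi_{j,k}^{(n)}$ be the $k$-th of the $q$ inverse branches of $f_n$ mapping $V$ into a disk of radius of order $R_0^{-1/q}$ about $b_j$. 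For $R_0$ sufficiently large these disks lie inside $V$ and are pairwise disjoint. Taking the time-independent alphabet $I^{(n)} = \{(j,k)\}$ over a single vertex $V$ then produces an ascending non-autonomous CGDMS whose conformality, bounded distortion, and open set condition follow from Koebe's theorem, with finite primitivity automatic. The inclusion $J_\Phi \sub \jl(F_{\lm,c})$ holds because points of $J_\Phi$ admit sequences of preimages under $F^n_{\lm,c}$ accumulating at poles of $f_0$, forcing the derivatives $|DF^n_{\lm,c}|$ to blow up along orbits and thus preventing normality.

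For the dimension lower bound, the local expansion $f_n(z) = \lm_n C_j (z-b_j)^{-q}(1 + o(1)) + c_n$ near each pole $b_j$ yields $\norm{D\phi_{j,k}^{(n)}}$ comparable to $R_0^{-(q+1)/q}$ uniformly in $(j,k,n)$, while the number of order-$q$ poles of $f_0$ in $V$ is of order $R_0^2$ by the two-dimensional density of the period lattice. Hence the one-step partition sum satisfies
\begin{align*}
	\sum_{a \in I^{(n)}} \norm{D\phi_a^{(n)}}^s \;\ge\; c_1 \cdot R_0^{2 - s(q+1)/q}
\end{align*}
for a positive constant $c_1$ independent of $n$. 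The exponent $2 - s(q+1)/q$ is positive precisely when $s < \frac{2q}{q+1}$, and for any such $s$ we may choose $R_0$ large enough to make the right-hand side exceed $1$. Together with uniform bounded distortion this forces the non-autonomous topological pressure at $s$ to be positive, yielding $B_\Phi \ge \frac{2q}{q+1}$ as required.

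The principal obstacle is securing uniformity in $n$: the constants $\ep$ and $\dl$ must be chosen so that the entire local picture at each pole of $f_0$ is preserved under every admissible $(\lm_n, c_n)$, with the same $q$-to-$1$ covering structure, uniformly controlled leading coefficient perturbations, and single-valued inverse branches on $V$. This is essentially a compactness argument at the single elliptic function $f_0$, made tractable by the affine form of the perturbation. A secondary but nontrivial bookkeeping task is to verify that the constructed system fulfills the axioms of a non-autonomous CGDMS in the precise sense used in the paper and that the hypotheses of the ascending systems theorem are met, so that Bowen's formula may be invoked to conclude.
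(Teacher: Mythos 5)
Your overall strategy -- build a finite non-autonomous conformal IFS out of inverse branches of the $f_n$ near poles of maximal order, invoke the paper's Bowen's-formula machinery for that subsystem, and get positivity of the lower pressure at every $s<\tfrac{2q}{q+1}$ from the planar density of the poles -- is sound and close in spirit to the paper's proof, though arranged differently. The paper fixes one pole $a$ and the ball $\ol{B}(a,S)$, uses the two-step return maps $f_{2n-1,a,b,1}^{-1}\circ f_{2n,b,a,1}^{-1}$, and makes the one-step partition sum large by exploiting the divergence of $\sum_b \absval{b}^{-t\frac{q+1}{q}}$ over far-away poles $b$; you instead take one region of scale $R_0$ containing $\asymp R_0^2$ order-$q$ poles and one-step branches, with $R_0\to\infty$ playing the role of the divergent series. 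Both yield the same exponent, but in your version $R_0=R_0(s)\to\infty$ as $s\uparrow\tfrac{2q}{q+1}$, while the theorem demands a single pair $(\ep,\dl)$ valid for all $s$; so you must make explicit that $\ep,\dl$ can be chosen independently of $R_0$ (this does work: one only needs $\dl$ and $\ep$ small enough, in terms of $f_0$ alone, that $\lm_n^{-1}(V-c_n)$ stays inside a fixed neighborhood of infinity free of critical values of $f_0$, since the branches of $f_n^{-1}$ are just the fixed branches of $f_0^{-1}$ precomposed with $w\mapsto(w-c_n)/\lm_n$). As written, your quantifier order leaves this gap open.

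Two steps are wrong as stated. First, the annulus $V=\set{R_0\le\absval{z}\le 2R_0}$ is not simply connected, and near a pole $b_j$ of order $q$ the map $f_n$ is a degree-$q$ cyclic covering of a punctured neighborhood of $\infty$; the core curve of $V$ generates the fundamental group of that punctured neighborhood and does not lift to a closed curve, so for $q\ge 2$ there are no single-valued inverse branches of $f_n$ defined on all of $V$ -- the preimage of $V$ near $b_j$ is a single annulus mapped $q$-to-$1$. Replace $V$ by a simply connected region of diameter $\asymp R_0$ (e.g.\ a square of side $\asymp R_0$ inside $\set{\absval{z}\ge R_0}$); it still contains $\asymp R_0^2$ order-$q$ poles, the $q$ branches then exist with disjoint images, and your derivative and counting estimates survive. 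Second, the system you construct is not ascending: the paper's definition requires $\phi_i^{(n)}=\phi_i^{(n+1)}$, whereas your maps are branches of $f_n=\lm_n f_0+c_n$ and genuinely depend on $n$, so Theorem \ref{thm: BF for ascending NCGDMS} does not apply. This is repairable by citing the right result: your subsystem is a stationary NCIFS with a fixed finite alphabet, hence uniformly finite, subexponentially bounded and finitely primitive, and since $\norm{D\phi_{j,k}^{(n)}}\asymp R_0^{-\frac{q+1}{q}}$ uniformly in $n$ it is balanced, so Bowen's formula follows from Theorem \ref{MainThm} (Corollary \ref{cor:BFwbs}) or Theorem \ref{thm: BF nonstat NCIFS} -- exactly the ``uniformly finite'' route the paper takes. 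Finally, to pass from the one-step sums to $\ul{P}(s)>0$ via \eqref{eqn:pq1lessthanfraction} you need $Z_{(n)}(s)\ge MK^{s}$ for some $M>1$, not merely $Z_{(n)}(s)>1$; this costs only a slightly larger $R_0$.
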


\subsection*{Structure of the Paper}
In Section \ref{Sec: Setup} we establish the fundamental definitions and necessary assumptions for the objects which we will study, e.g. non-autonomous graph directed systems, alphabets, and limit sets. In Section \ref{Sec: Subexp} we establish the definitions of subexponentially bounded and finitely primitive systems and present results detailing the interplay of these two assumptions. The pressure functions and their behavior are then described in Section \ref{Sec: Pressure} as well as a result showing that the Bowen dimension of a system is always greater than or equal to the Hausdorff dimension of the limit set. Section \ref{Sec: Gen Low Bdd} is devoted entirely to the proof of a technical theorem giving a general lower bound for the dimension of the limit set which is then used in Section \ref{sec: balancing} to establish Bowen's formula for sufficiently well balanced finite systems. In Section \ref{section Rem-Urb NCIFS} we extend Rempe-Gillen and Urba\'nski's results for infinite and subexponentially bounded stationary non-autonomous iterated function systems, without balancing conditions, to the case of non-stationary non-autonomous iterated function systems. In Section \ref{Sec: Haus meas} we consider the Hausdorff measure of the limit sets of a class of uniformly finite systems. Sections \ref{Sec: Relax Subexp} and \ref{Ascending} are devoted to relaxing the subexponential boundedness and balancing conditions by providing similar conditions and introducing ascending systems. The final section, Section \ref{Sec: Examples}, provides examples of non-autonomous conformal graph directed Markov systems as well as an application to conformal dynamics.  

In this paper we let $``\const"$ denote an arbitrary positive constant, which may change from line to line. 

\section{Setup and Main Assumptions}\label{Sec: Setup}
We begin by introducing several definitions and notations necessary to give a proper description of non-autonomous graph directed systems. As in the case of random and autonomous graph directed Markov systems, a non-autonomous graph directed Markov system, is based on a directed multigraph $(V,E,i,t)$ and a sequence of edge incidence matrices $\seq{A^{(n)}}$. $V$ will be a countable set of vertices and $E$ will be a countable set of edges. Each of the sets $V$ and $E$ may, and most likely will be, infinite. The functions $i,t:E\to V$ give the direction on the graph such that for any given edge $e\in E$, $i(e)$ denotes the initial vertex of $e$ and $t(e)$ denotes the terminal vertex of $e$.

Furthermore, we assume that there are sequences $\seq{E_n}\sub E$ and $\left(V_n\right)_{n\geq0}\sub V$, with
\begin{align*}
	\#V_n<\infty\quad \text{ for each }n\in\NN,
\end{align*}
such that for each $n\geq 1$ and each edge $e\in E_n$, we have
\begin{align*}
	i(e)\in V_{n-1} \spand t(e)\in V_{n}.
\end{align*} 
Here $E_n$ may be infinite. Furthermore, without loss of generality, we assume that 
\begin{align*}
	V=\union_{n\geq 0}V_n.
\end{align*}
Throughout the article, we fix $d\in\NN$, and for each $n\geq 0$ and $v\in V_n$, we let
\begin{align*}
\cX^{(n)}=\set{X_v^{(n)}:v\in V_n}
\end{align*}
where $X_v^{(n)}$ is a non-empty compact, connected subset of $\RR^d$ which is regularly closed, i.e.
\begin{align*}
	X_v^{(n)}=\ol{\intr{X_v^{(n)}}}.
\end{align*}
For fixed $n\in\NN$ we assume that the $X_v^{(n)}$ are disjoint ranging over $V_n$. However, this assumption is nonessential and is taken only for the ease of exposition and simplification of proofs. Remark 5.18 of \cite{chousionis_conformal_2016} describes a process for lifting an autonomous graph directed Markov system $\Phi$, in which the spaces $X_v$ are not necessarily disjoint, to a system $\Phi'$ such that $\Phi$ and $\Phi'$ have essentially the same limit set but the corresponding compact spaces $X_v'$ of $\Phi'$ are disjoint. This procedure is easily modified to fit our non-autonomous framework. 

For each time $n\geq 1$, we consider $E_n$ to be the alphabet at time $n$ and let 
\begin{align*}
	\Phi^{(n)}=\set{\phi_e^{(n)}:X_{t(e)}^{(n)}\to X_{i(e)}^{(n-1)}}_{e\in E_n}
\end{align*}
be a collection of functions whose domains and codomains depend upon the direction and time step of the multigraph. Later, for the sake of notational continuity, we will prefer the notation $I^{(n)}:=E_n$ for the alphabet at time $n$. Now for each $n\in\NN$ there is an edge incidence matrix 
\begin{align*}
	A^{(n)}:E_n\times E_{n+1}\to \set{0,1}
\end{align*} 
defined by $A^{(n)}_{ab}=1$ implies $t(a)=i(b)$. Notice that when  $t(a)=i(b)$, then the domain of $\phi_a^{(n)}$, $X_{t(a)}^{(n)}$, is equal to the codomain of $\phi_b^{(n+1)}$, $X_{i(b)}^{(n)}$, which shows that the composition $\phi_a^{(n)}\circ\phi_b^{(n+1)}$ is possible, though not necessarily allowable. If  $A^{(n)}_{ab}=1$ will we say that the composition $\phi_a^{(n)}\circ\phi_b^{(n+1)}$ is admissible, or allowable. In particular, this means that the letter $b$ is allowed to follow the letter $a$.

We now describe the many collections of words with which will be working. For $1\leq m\leq n\leq \infty$ let 
\begin{align*}
	E^{m,n}=\ds\prod_{j=m}^n E_j.
\end{align*}
If $m=1$, we simply write $E^n$. For any finite word $\om\in E^{m,n}$ for $1\leq m\leq n<\infty$, we let $\absval{\om}$ denote the length of $\om$, and we extend the notions of the functions $i,t$ by letting $i(\om):=i(\om_m)$ and $t(\om):=t(\om_n)$.
\begin{definition}
	For $1\leq m\leq n\leq \infty$, a word $\om\in E^{m,n}$ is called \textit{admissible} if $A^{(j)}_{\om_j\om_{j+1}}=1$ for all $m< j\leq n$, or equivalently if we have 
	\begin{align*}
		A^{m,j}_\om:=\ds\prod_{k=m}^{j}A^{(k)}_{\om_k\om_{k+1}}=1.
	\end{align*}
\end{definition}
We let $I^n_A$ denote the set of admissible all words of length $n$, for $1\leq n\leq \infty$, that is,
\begin{align*}
	I^n_A:=\set{\om\in E^\infty : A^{(j)}_{\om_j\om_{j+1}}=1 \text{ for all } 1\leq j\leq n}.
\end{align*} 
If there is no confusion about the sequence $A=\set{A^{(n)}}_{n\in\NN}$ of incidence matrices, for notational convenience we shall write $I^n$ instead. Now for $1<m\leq n\leq \infty$ we let $I^{m,n}$ denote the set of admissible words from time $m$ to time $n$ that are part of some infinite admissible word. To say this another way in terms of extensions, we write
\begin{align*}
	I^{m,n}:=\{\om\in E^{m,n} : A^{(j)}_{\om_j\om_{j+1}}&=1 \text{ for all }m\leq j\leq n-1,\\ 
	&\text{ and } \exists\al\in I^{m-1},\gm\in I_{n+1}^\infty \st \al\om\gm\in I^\infty\}.
\end{align*}
If $m=1$, we again abbreviate $I^n:=I^{1,n}$. We let $I^*$ denote the set of all finite admissible words which originate from $I^{(1)}$, that is
\begin{align*}
	I^*=\set{\om:\exists n\in\NN \st \om\in I^n}.
\end{align*}
For each finite word $\om=\om_m\om_{m+1}\dots\om_n\in I^{m,n}$ we associate the conformal map $\phi^{m,n}_\om$ given by 
\begin{align*}
	\phi^{m,n}_\om:=\phi_{\om_m}^{(m)}\circ\cdots\circ\phi_{\om_n}^{(n)}:X_{t(\om)}^{(n)}\to X_{i(\om)}^{(m)}; \qquad \phi_{\om_j}^{(j)}\in\Phi^{(j)}, m\leq j\leq n.
\end{align*}
From this moment on, for the sake of continuity of our notation and that of keeping with the notation in \cite{rempe-gillen_non-autonomous_2016}, we will let
\begin{align*}
	I^{(n)}:=E_n
\end{align*} 
denote the alphabet at time $n$. If no confusion arises, for the sake of notational convenience, we will simply denote $\phi_\om:=\phi_\om^{m,n}$ for $\om\in I^{m,n}$ and $\phi_a:=\phi_a^{(n)}$ for $a\in I^{(n)}$. In particular, we will use parentheses whenever we wish to talk about a single time step and no parentheses when we wish to discuss longer words. 

If we are interested in words originating from a particular vertex $v$, then for each of the alphabets defined above, we can formulate the analogous definition for each $1\leq m\leq n\leq \infty$, by letting
\begin{align*}
	I_v^{m,n}=\set{\om\in I^{m,n}:i(\om)=v}.
\end{align*}
Using this notation we clearly have that
\begin{align*}
	I^{(n)}=\union_{v\in V} I^{(n)}_v \quad \text{ and } \quad I^n=\union_{v\in V} I^n_v.
\end{align*}
If a map $\phi:X\to Y$ is conformal, or a similarity, we let $D\phi$ denote the usual derivative $\phi'$, or respectively the scaling factor, of $\phi$, and set 
\begin{align*}
	\norm{D\phi}=\sup\set{\absval{D\phi(x)}:x\inX}.
\end{align*} 
We are now ready to define our primary object of study.
\begin{definition}
	A \textit{non-autonomous conformal graph directed Markov system} (NCGDMS) $\Phi$ is a sequence of maps, incidence matrices, and spaces together with a multigraph denoted by 
	\begin{align*}
	\Phi=\left(\seq{\Phi^{(n)}},\seq{A^{(n)}},\left(\cX^{(n)}\right)_{n\geq 0},\left(V_n\right)_{n\geq 0},\seq{I^{(n)}},E,i,t\right)
	\end{align*} 
	where
	\begin{align*}
		\Phi^{(n)}=\set{\phi_e^{(n)}:X_{t(e)}^{(n)}\to X_{i(e)}^{(n-1)}}_{e\in I^{(n)}},
	\end{align*} 
	such that the following hold:
	\begin{enumerate}
		\item (Open Set Condition) $\phi_a^{(n)}(\intr{X^{(n)}_{t(a)}})\intersect\phi_b^{(n)}(\intr{X^{(n)}_{t(b)}})=\emptyset$ for all $n\in\NN$, and $a\not=b\in I^{(n)}$.
		
		\item (Conformality) For each $v\in V_n$ and $n\in\NN$ there is an open and connected $W_v^{(n)}\bus X_v^{(n)}$ (independent of $j$) such that for each $j\in I^{(n)}$ with $t(j)=v$, the map $\phi_j^{(n)}$ extends to a $C^1$ conformal diffeomorphism of $W_v^{(n)}$ into $W_{i(j)}^{(n-1)}$. Moreover, we can assume that 
		\begin{align}\label{Wvn diam}
			\diam(W_v^{(n)})\leq2 \diam(X_v^{(n)})
		\end{align}
		for each $n\in\NN$ and each $v\in V_n$.
		
		\item (Uniform Contraction) There is a constant $\eta\in(0,1)$ such that 
		\begin{align*}
		\absval{D\phi_{\om|_j^{j+m}}(x)}\leq \eta^m
		\end{align*} for all sufficiently large $m\in\NN$, all $\om\in I^\infty$, all $j\geq 1$, and all $x\in X_{t(\om_{j+m})}^{(j+m)}$, where 
		\begin{align*}
			\om|_j^{j+m}:=\om_j\om_{j+1}\dots\om_{j+m}.
		\end{align*}
		In the sequel, for the ease of exposition, we assume that 
		\begin{align*}
			\absval{D\phi_i^{(j)}(x)}\leq \eta
		\end{align*}
		for all $j\in\NN$, $\om\in I^\infty$, and $x\in X^{(j)}_{t(\om_j)}$.
	
		\item (Bounded Distortion) There is $K\geq 1$ such that for all $m\in\NN$, for any $k\leq m$, for all $\om\in I^{k,m}$, 
		\begin{align*}
		\absval{D\phi_\om^{k,m}(x)}\leq K\absval{D\phi_\om^{k,m}(y)}
		\end{align*} 
		for all $x,y\in X_{t(\om)}^{(m)}$.
		
		\item (Geometry Condition) 	There exists $N\in\NN$ such that for all $n\in\NN$ and all $v\in V_n$ there exist $\Gm_1^{(n)},\dots\Gm_N^{(n)}\sub W_v^{(n)}$ such that each of the $\Gm_j^{(n)}$ are convex, and 
		\begin{align*}
		X_v^{(n)}\sub\union_{j=1}^N\Gm_j^{(n)}.
		\end{align*}
		We also suppose there exists $\vartheta>0$ such that for each $x\in X_v^{(n)}$ we have that
		\begin{align}\label{hyp: comp ball in W}
		B(x,\vartheta\cdot\diam(X_v^{(n)}))\sub W_v^{(n)}.
		\end{align}		
		
		\item (Uniform Cone Condition) There exist $\al,\gm>0$ with $\gm<\frac{\pi}{2}$ such that for every $n\in\NN$, every $v\in V_n$, and every $x\in \partial X_v^{(n)}$ there exists $u_x$ and an open cone 
		\begin{align*}
		\Con(x,u_x,\gm,\al\cdot\diam(X_v^{(n)}))\sub\intr{X_v^{(n)}}
		\end{align*} 
		with vertex $x$, direction vector $u_x$ with unit length, central angle of Lebesgue measure $\gm$, and altitude $\al\cdot\diam(X_v^{(n)})$ comparable to $\diam(X_v^{(n)})$. 
		
		\item (Diameter Condition) For each $n\in\NN$ we have
		\begin{align*}
		\limty{n}\frac{1}{n}\log \ol{d}_n=0 \spand \limty{n}\frac{1}{n}\sup_{k\geq 0}\log\frac{\ol{d}_{k+n}}{\ul{d}_k}=0,
		\end{align*}
		where 
		\begin{align*}
		\ul{d}_n=\min\set{\diam(X_v^{(n)}):v\in V_n}\spand\ol{d}_n=\max\set{\diam(X_v^{(n)}):v\in V_n}.
		\end{align*}
		Furthermore, we assume that 
		\begin{align*}
		\limty{n}\frac{1}{n}\log \#V_n=0.
		\end{align*}

	\end{enumerate}
	A NCGDMS $\Phi$ is called \textit{stationary} if the sequence of sets $\cX^{(n)}$ is constant, i.e. if $\cX^{(n)}=\cX^{(m)}$ for all $n,m\in\NN$. In other words, the collection $\cX$ of compact connected spaces does not depend on the time $n$. To emphasize when a particular NCGDMS is not stationary, we will call that system \textit{non-stationary}. If the collections $V_n$ are singletons for every $n\in\NN$ and if the matrices $A^{(n)}$ contain only ones, i.e. every letter at time $n+1$ is allowed to follow every letter at time $n$, then we refer to the system $\Phi$ as a \textit{non-autonomous conformal iterated function system} (NCIFS). 
	
	A NCGDMS $\Phi$ is called \textit{finite} if the collections $\Phi^{(n)}$ are finite for each $n$, and \textit{infinite} otherwise. $\Phi$ is said to be \textit{uniformly finite} if there is a constant $M>0$ such that $\#I^{(n)}<M$ for each $n\in\NN$. In the sequel we will mainly work with finite NCGDMS in their full generality. However, in Section \ref{section Rem-Urb NCIFS} and parts of \ref{Ascending}, we extend the results of \cite{rempe-gillen_non-autonomous_2016} to particular classes of infinite NCIFS.
\end{definition}

\begin{remark}
	Notice that this definition generalizes the notion of conformal GDMS and NCIFS, in the sense of Rempe-Gillen and Urba\'nski. In particular, if $\Phi$ is a stationary NCGDMS such that the collections $\cX^{(n)}$ each contain the single space $X$, i.e. $\cX^{(n)}=\cX^{(m)}=\set{X}$ for all $n,m\in\NN$, then $\Phi$ is a NCIFS as described in \cite{rempe-gillen_non-autonomous_2016}. Rempe-Gillen and Urba\'nski described what we would call a stationary NCIFS. Unless stated otherwise, in this paper when we refer to a NCIFS we mean a NCGDMS, which is not necessarily non-stationary, such that for each $n\in\NN$ the collection $\cX^{(n)}$ of spaces at time $n$ is a singleton which may differ depending on the time $n$. 
	 
	Now if $\Phi$ is stationary and the collections of functions $\Phi^{(n)}=\Phi^{(m)}$ and the matrices $A^{(n)}=A^{(m)}$ for each $n\neq m$, then $\Phi$ is a conformal GDMS as described in \cite{mauldin_graph_2003}. If we wish to emphasize a system's lack of time dependence, we will call that system an \textit{autonomous} conformal GDMS. And of course, each of these different constructions is a generalization of the theory of conformal iterated function systems as discussed in \cite{mauldin_dimensions_1996}.
\end{remark}

\begin{remark}
	We note that the Bounded Distortion Property (BDP) is automatically satisfied whenever $d\geq 2$. The case $d=2$ follows from Koebe's Distortion Theorem, and the case $d\geq 3$ follows from Liouville's Theorem as whenever $d\geq 3$ each conformal map $\phi$ can be written as a composition of an inversion, a similarity, and a translation. In the case that $d=1$ the BDP needs to be checked. In the event that the system is comprised of similarities, then the BDP is automatically satisfied taking $K=1$. 
	
	An important consequence of the BDP is the following. Let $m<n\in\NN$ and let $\om\in I^n$ such that $\om=\al\bt$ with $\al\in I^m$ and $\bt\in I^{m+1,n}$ then we have
	\begin{align*}
		\norm{D\phi_\om}\leq\norm{D\phi_\al}\cdot\norm{D\phi_\bt}\leq K^2\norm{D\phi_\om}.
	\end{align*}
\end{remark}
\begin{remark}
	Although the Geometry Condition may seem overly prohibitive, we would argue instead that this condition is in fact quite mild. We point out that this condition is always satisfied in the case of any autonomous iterated function system or graph directed Markov system, finite or infinite. Furthermore, this condition is satisfied if the set $V$ of vertices is finite. We also note that this condition is necessary in order to obtain the following required dynamical assumption.
\end{remark}
\begin{proposition}
	There exists $C>0$ such that for all $m\leq n\in\NN$, all $\om\in I^{m,n}$, and all $U\sub X_{t(\om)}^{(n)}$ we have 
	\begin{align}\label{MV like ineq}
		\diam(\phi_\om(U))\leq C\norm{D\phi_\om}\diam(U)
	\end{align}
	and 
	\begin{align}\label{MV inv ineq}
		\diam\left(\phi_\om(X_{t(\om)}^{(n)})\right)\geq C^{-1}\norm{D\phi_\om}\diam(X_{t(\om)}^{(n)}).
	\end{align}
\end{proposition}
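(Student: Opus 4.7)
The plan is to handle the two inequalities separately, using the Geometry Condition and the Uniform Cone Condition respectively, with Bounded Distortion (BDP) as the glue that converts pointwise derivative estimates into estimates involving $\|D\phi_\om\|$.

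For \eqref{MV like ineq}, I would split on whether $\diam(U)$ is comparable to $\diam(X_{t(\om)}^{(n)})$. When $\diam(U)<\vartheta\cdot\diam(X_{t(\om)}^{(n)})$, the ball condition \eqref{hyp: comp ball in W} ensures that for any $x,y\in U$ the segment $[x,y]$ lies entirely in $W_{t(\om)}^{(n)}$, so the mean-value inequality applied to the conformal extension of $\phi_\om$, combined with BDP, gives $|\phi_\om(x)-\phi_\om(y)|\leq C_1\|D\phi_\om\|\cdot|x-y|$. When $\diam(U)\geq\vartheta\cdot\diam(X_{t(\om)}^{(n)})$, I would instead invoke the convex cover $\{\Gm_j^{(n)}\}_{j=1}^N\sub W_{t(\om)}^{(n)}$ from the Geometry Condition: since $X_{t(\om)}^{(n)}$ is compact and connected, the nerve of this cover is connected, so any two points of $X_{t(\om)}^{(n)}$ can be linked by a chain of at most $N$ straight segments, each lying inside one convex $\Gm_j^{(n)}\sub W_{t(\om)}^{(n)}$. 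Summing the mean-value estimates over those $N$ segments and using \eqref{Wvn diam} bounds $\diam(\phi_\om(X_{t(\om)}^{(n)}))$ by a constant times $\|D\phi_\om\|\cdot\diam(X_{t(\om)}^{(n)})$, which is in turn at most $\vartheta^{-1}\|D\phi_\om\|\cdot\diam(U)$ in this regime. Taking the larger of the two resulting constants yields \eqref{MV like ineq}.

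For \eqref{MV inv ineq}, the plan is to exhibit a round ball sitting inside $X_{t(\om)}^{(n)}$ of radius comparable to $\diam(X_{t(\om)}^{(n)})$, and then apply a Koebe-type theorem. Concretely, the Uniform Cone Condition furnishes at each boundary point a cone of altitude $\al\cdot\diam(X_{t(\om)}^{(n)})$ and aperture $\gm$ contained in $X_{t(\om)}^{(n)}$; this cone contains an inscribed ball $B(y_0,r_0)\sub X_{t(\om)}^{(n)}$ with $r_0=c(\al,\gm)\cdot\diam(X_{t(\om)}^{(n)})$. Since $\phi_\om$ extends to a conformal diffeomorphism on $W_{t(\om)}^{(n)}\supset B(y_0,r_0)$, Koebe's $1/4$-theorem (when $d=2$) or its consequence of Liouville's theorem (when $d\geq3$) guarantees that $\phi_\om(B(y_0,r_0))$ contains a ball of radius at least $c'|D\phi_\om(y_0)|\cdot r_0$. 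Applying BDP to replace $|D\phi_\om(y_0)|$ by $K^{-1}\|D\phi_\om\|$, and noting $\phi_\om(B(y_0,r_0))\sub \phi_\om(X_{t(\om)}^{(n)})$, delivers the desired lower bound with $C^{-1}=2c c'/K$ for appropriate constants.

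The step I expect to be most delicate is the use of the mean-value inequality on segments that pass through $W_{t(\om)}^{(n)}\setminus X_{t(\om)}^{(n)}$ in the upper bound, because the stated BDP only controls $|D\phi_\om|$ on $X_{t(\om)}^{(n)}$. This requires promoting BDP to a slightly enlarged neighborhood of $X_{t(\om)}^{(n)}$ inside $W_{t(\om)}^{(n)}$; the standard device is Koebe's distortion theorem on a compactly contained region, which is available thanks to \eqref{hyp: comp ball in W}, and the net effect is simply to replace $K$ by a larger constant depending only on $\vartheta$ and $d$. Everything else is bookkeeping, and crucially all constants depend only on $N$, $K$, $\vartheta$, $\al$, $\gm$, and $d$, so they are uniform in $\om$, $m$, and $n$.
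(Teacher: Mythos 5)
Your proof of \eqref{MV like ineq} is essentially the paper's: the paper splits according to whether $U$ is contained in the ball $B(x,\vartheta\diam(X_{t(\om)}^{(n)}))$ furnished by \eqref{hyp: comp ball in W} (in which case it passes to the convex hull of $U$ and applies the mean value inequality) or not (in which case $\diam(U)\geq\frac{1}{2}\vartheta\diam(X_{t(\om)}^{(n)})$ and the convex pieces $\Gm_1^{(n)},\dots,\Gm_N^{(n)}$ of the Geometry Condition give $\diam(\phi_\om(X_{t(\om)}^{(n)}))\leq 2N\norm{D\phi_\om}\diam(X_{t(\om)}^{(n)})$); this is the same dichotomy and the same use of convexity as your chaining argument, up to bookkeeping, and your closing remark about controlling $\absval{D\phi_\om}$ on the part of $W_{t(\om)}^{(n)}$ lying outside $X_{t(\om)}^{(n)}$ is actually more careful than the paper, which simply writes $\norm{D\phi_\om}$ for those suprema. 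For \eqref{MV inv ineq} you take a genuinely different route: the paper fixes $x\in X_{t(\om)}^{(n)}$, works with the ball $B(x,\vartheta\diam(X_{t(\om)}^{(n)}))\subset W_{t(\om)}^{(n)}$, and runs a maximal-inscribed-radius argument, applying the Bounded Distortion Property to $\phi_\om^{-1}$ to show that the image of this ball contains a ball of radius $K^{-1}\vartheta\norm{D\phi_\om}\diam(X_{t(\om)}^{(n)})$ about $\phi_\om(x)$; you instead inscribe a ball of radius comparable to $\diam(X_{t(\om)}^{(n)})$ inside the cone given by the Uniform Cone Condition, so that the ball lies in $X_{t(\om)}^{(n)}$ itself, and then invoke a Koebe/Liouville covering estimate together with the BDP at its center. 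Your version buys a cleaner conclusion: since your ball sits inside $X_{t(\om)}^{(n)}$, its image sits inside $\phi_\om(X_{t(\om)}^{(n)})$ and the diameter bound is immediate, whereas the paper's ball lives only in $W_{t(\om)}^{(n)}$ and the final passage to $\diam(\phi_\om(X_{t(\om)}^{(n)}))$ is left rather terse. The one caveat in your argument is the case $d=1$, where Koebe and Liouville are unavailable; but there the BDP is an explicit hypothesis, so integrating $\absval{D\phi_\om}\geq K^{-1}\norm{D\phi_\om}$ over the inscribed interval (or running the paper's inverse-branch argument on your inscribed ball, which works in every dimension) closes this immediately, and in all cases your constants depend only on $N$, $K$, $\vartheta$, $\al$, $\gm$, and $d$, as required.
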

\begin{proof}
	Let $x\in U$. Then either $U\sub B\left(x,\vartheta\diam(X_{t(\om)}^{(n)})\right)$ or not. 
	If so, let $V$ denote the convex hull of $U$. Then $\diam(V)=\diam(U)$ and $V\sub W_{t(\om)}^{(n)}$. Then 
	\begin{align*}
		\diam(\phi_\om(U))\leq\diam(\phi_\om(V))\leq\norm{D\phi_\om}\diam(V)=\norm{D\phi_\om}\diam(U).
	\end{align*}
	Taking $C_1=1$, suffices. Now, if $U\not\sub B\left(x,\vartheta\diam(X_{t(\om)}^{(n)})\right)$, then 
	\begin{align}\label{diam U vartheta ineq}
		\diam(U)\geq \frac{1}{2} \vartheta\diam(X_{t(\om)}^{(n)}).
	\end{align}
	The Geometry Condition and mean value inequality together allow us to write 
	\begin{align*}
		\diam(\phi_\om(U))\leq \diam\left(\phi_\om(X_{t(\om)}^{(n)})\right)\leq \sum_{j=1}^N\diam\left(\phi_\om(\Gm_j^{(n)})\right)\leq \sum_{j=1}^N\norm{D\phi_\om}\diam(\Gm_j^{(n)}).
	\end{align*}
	In view of \eqref{Wvn diam} and given that $\Gm_j^{(n)}\sub W_{t(\om)}^{(n)}$ for each $1\leq j\leq N$ we have 
	\begin{align*}
		\diam(\phi_\om(U))\leq N\norm{D\phi_\om}\diam(W_{t(\om)}^{(n)})\leq 2N\norm{D\phi_\om}\diam(X_{t(\om)}^{(n)}).
	\end{align*}
	Finally, applying \eqref{diam U vartheta ineq}, we see that
	\begin{align*}
		\diam(\phi_\om(U))\leq \frac{4N}{\vartheta}\norm{D\phi_\om}\diam(U).
	\end{align*}
	Taking $C_1=\max\set{1,4N/\vartheta}$ finishes the first assertion.
	
	Now for the second assertion, fix $x\in X_{t(\om)}^{(n)}$. Then
	\begin{align*}
		B_\om^{(n)}:=B\left(x,\vartheta\diam(X_{t(\om)}^{(n)})\right)\sub W_{t(\om)}^{(n)}.	
	\end{align*} 
	Let $R\geq 0$ be the maximal radius such that $B(\phi_\om(x),R)\sub \phi_\om(B_\om^{(n)})$. Then 
	the Bounded Distortion Property implies that 
	\begin{align*}
		\phi_\om^{-1}\left(B(\phi_\om(x),R)\right)\sub B\left(x, \norm{D(\phi_\om^{-1})}R\right)\sub B\left(x, K\norm{D\phi_\om}^{-1}R\right).
	\end{align*}
	Therefore, $K\norm{D\phi_\om}^{-1}R\geq \vartheta\diam (X_{t(\om)}^{(n)})$, as the maximality of $R$ would be contradicted if this were not the case. Hence we see that 
	\begin{align*}
		\phi_\om\left(B_\om^{(n)}\right)\sub B\left(\phi_\om(x),K^{-1}\norm{D\phi_\om}\vartheta\diam(X_{t(\om)}^{(n)})\right)
	\end{align*}
	for every $x\in X_{t(\om)}^{(n)}$. Thus taking $C_2=\vartheta K^{-1}$ finishes the second assertion as we have 
	\begin{align*}	
		\diam\left(\phi_\om(X_{t(\om)}^{(n)})\right)\geq C_2\norm{D\phi_\om}\diam(X_{t(\om)}^{(n)}). 
	\end{align*}
	Finally, taking $C\geq \max\set{C_1, C_2^{-1}}$ completes the proof.
\end{proof}
In particular, the previous proposition gives that 
\begin{align*}
	 C^{-1}\norm{D\phi_\om}\diam(X_{t(\om)}^{(n)})
	\leq \diam\left(\phi_\om(X_{t(\om)}^{(n)})\right)
	\leq C\norm{D \phi_\om}\diam(X_{t(\om)}^{(n)}),
\end{align*}
and in fact
\begin{align*}
C^{-1}\ul{d}_n\norm{D\phi_\om}
\leq \diam\left(\phi_\om(X_{t(\om)}^{(n)})\right)
\leq C\ol{d}_n\norm{D\phi_\om},
\end{align*}
for all $\om\in I^n$ and $n\in\NN$. 
\begin{remark}
	While the two diameter conditions, may seem quite technical, we point out that this is certainly true whenever there is some constant $T\geq 1$ such that for each $n\in\NN$ and each $v\in V_n$
	\begin{align*}
		T^{-1}\leq \diam(X_v^{(n)})\leq T.
	\end{align*}
	We also note that the two diameter conditions together imply that 
	\begin{align}\label{dn low sub exp bdd}
	\limty{n}\frac{1}{n}\log \ul{d}_n=0.
	\end{align}
	Moreover, the assumption that
	\begin{align}\label{V_n growth assumption}
		\limty{n}\frac{1}{n}\log\#V_n=0
	\end{align} 
	is quite reasonable given that Rempe-Gillen and Urba\'nski showed that Bowen's formula does not hold in general for a stationary NCIFS for which the alphabets grow in size at least exponentially. In fact we will see the same phenomena for NCGDMS, and since $\#V_n\leq \#I^{(n)}$ for each $n\in\NN$ we will see that our restriction \eqref{V_n growth assumption} is much weaker than our requirement of subexponential growth of the alphabets.
\end{remark}
\begin{remark}
	The Uniform Cone Condition assures us that the spaces don't become too wild as the time step $n$ goes to infinity. In particular, we know that the spaces $X_v^{(n)}$ are not being ``pinched" too much, and instead we have that the ratio of the inner and outer diameters is uniformly bounded in between two positive constants. We also point out that this is a natural generalization of the cone condition presented in \cite{mauldin_graph_2003} for autonomous graph directed Markov systems as well as the one presented in \cite{roy_random_2011} for random graph directed Markov systems.  

	As a consequence of the Bounded Distortion Property, Uniform Cone Condition and Geometry Condition we have that there is some $\al'\in(0,\al]$ and $\gm'\in(0,\gm]$ such that
	 \begin{align}
		 \phi_\om(X_{t(\om)}^{(n)})&\bus\Con\left(\phi_\om(x),D\phi_\om(x)u_x,\gm',\norm{D\phi_\om}\al'\diam(X_{t(\om)}^{(n)})\right)\nonumber\\
		 &\bus\Con\left(\phi_\om(x),D\phi_\om(x)u_x,\gm',C^{-1}\al'\diam(\phi_\om(X_{t(\om)}^{(n)}))\right)\label{cone containment}
	 \end{align}
	 for each $\om\in I^{m,n}$ and $x\in X_{t(\om)}^{(n)}$. See Theorem 4.1.7, and the preceding results, of \cite{mauldin_graph_2003} for the details in the autonomous setting. The proof is similar in our setting, however we note that \eqref{hyp: comp ball in W} is crucial in showing that the constant $K_5$, of Theorem 4.1.6 of \cite{mauldin_graph_2003} is finite.   
\end{remark}
	The following definition and proposition provides natural conditions under which the Uniform Cone Condition, and furthermore the Geometry Condition, is satisfied.
	\begin{definition}
		Given a compact connected space $X$, we respectively define the \textit{inner radius} to be the following
		\begin{align*}
		\inrad(X)=\sup\set{r:B(x,r)\sub X, x\in X}.
		\end{align*}
		We say that a set $X$ satisfies the \textit{radius ratio property} if there is some $T>0$ such that 
		\begin{align*}
		\frac{\inrad(X)}{\diam(X)}\geq T.
		\end{align*}
		We say that a collection $\cU$ of sets satisfies the \textit{uniform radius ratio property} (URR) if there exists a uniform constant $T>0$ such that 
		\begin{align*}
		\frac{\inrad(U)}{\diam(U)}\geq T
		\end{align*}
		for every $U\in\cU$. 		
	\end{definition} 
\begin{proposition}\label{prop: convex geom cone replacement}
	Suppose that the collection 
	\begin{align*}
		\sX:=\set{X_\al}_{\gm\in \Gm}
	\end{align*} satisfies the URR property. Furthermore, suppose that $X_\gm$ is convex for each $\gm\in\Gm$. Then the collection $\sX$ satisfies the Uniform Cone Condition.
\end{proposition}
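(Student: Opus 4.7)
The plan is to exploit convexity: the URR property gives, in each $X\in\sX$, an inscribed ball $B(x_0,r)\sub X$ with $r\geq T\diam(X)$, and for any boundary point $x\in\partial X$ the convex hull of $\set{x}$ with $\ol{B(x_0,r/2)}$ is an ``ice cream cone'' sitting inside $X$ by convexity, with $x$ as its apex. An open cone of sufficiently small aperture and altitude proportional to $\diam(X)$ will then fit inside this ice cream cone, and its open interior will lie in $\intr X$.

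More precisely, let $T$ denote the URR constant; note $T\leq 1/2$ automatically, since $B(x_0,r)\sub X$ forces $\diam(X)\geq 2r$. For $x\in\partial X$ set $\rho:=|x-x_0|$ and $u_x:=(x_0-x)/\rho$; since $x\in\partial X$ and $B(x_0,r)\sub\intr X$, we have $r\leq\rho\leq\diam(X)$. Take the cone's half-aperture to be $\theta:=\arctan(T/2)$ and altitude scale $\al:=T/2$; the Lebesgue measure $\gm$ of the corresponding central angle then depends only on $T$ and the ambient Euclidean dimension, and satisfies $\gm<\pi/2$ since $\theta<\pi/4$.

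I would then verify directly that $\Con(x,u_x,\gm,\al\diam(X))\sub\intr X$ for every $X\in\sX$ and every $x\in\partial X$. Any $y$ in the open cone can be written $y-x=tu_x+sw$ with $w\perp u_x$, $|w|=1$, $t>0$, $|s|/t\leq\tan\theta=T/2$, and $|y-x|<\al\diam(X)$. Setting $\lm:=1-t/\rho$, elementary algebra gives $\lm\in(1/2,1)$ (since $t\leq|y-x|<(T/2)\diam(X)\leq r/2\leq\rho/2$) and
\begin{equation*}
y=\lm x+(1-\lm)z,\qquad z:=x_0+(s\rho/t)w,
\end{equation*}
where $|z-x_0|=|s|\rho/t\leq(T/2)\diam(X)\leq r/2$, so $z\in B(x_0,r/2)\sub X$. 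Convexity of $X$ gives $y\in X$, and since $z\in B(x_0,r)$ has distance at least $r/2$ to $\partial B(x_0,r)$ and $1-\lm>0$, any small perturbation $y+\varepsilon w'$ can be written as the same convex combination with $z$ replaced by $z+\varepsilon w'/(1-\lm)\in B(x_0,r)\sub X$; hence $y\in\intr X$.

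The only delicate point is bookkeeping: one must choose $\al$ and $\theta$ strictly smaller than $T$ (here, both equal to $T/2$) so that $z$ lies strictly inside the inscribed ball and $\lm$ is strictly in $(0,1)$, upgrading ``$y\in X$'' to ``$y\in\intr X$'' and forcing $\gm<\pi/2$. Once the constants are fixed, the remainder is elementary trigonometry combined with the convexity hypothesis.
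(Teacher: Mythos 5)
Your proof is correct and follows essentially the same route as the paper: use the URR property to produce an inscribed ball of radius comparable to $\diam(X)$, aim the cone from the boundary point $x$ toward its center, and use convexity (the ``ice cream cone'' hull of the apex with the ball) to place a cone of uniform aperture $\arctan(T/2)$ and altitude $(T/2)\diam(X)$ inside $X$. Your version is in fact slightly more careful than the paper's, since by shrinking to the half-radius ball and perturbing the convex combination you verify that the open cone lies in $\intr X$, as the Uniform Cone Condition formally requires.
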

\begin{proof}
	As $\sX$ satisfies the uniform radius ratio property, each space $Y\in\sX$ must contain a ball of radius $r:=\inrad(Y)$ centered at some point $c$. Let $x\in\partial Y$ and choose a point $b$ on the circle of radius $r$ centered at $c$ such that the line segment from $c$ to $b$ is perpendicular to the line segment from $x$ to $c$. Since $r/\diam(Y)\geq T$ we have    	
	\begin{align*}
		\absval{c-b}=r,\spand T\cdot\diam(Y)\leq r\leq\absval{c-x}\leq \diam(Y).
	\end{align*}
	Letting 
	\begin{align*}
		0<\gm:=\tan^{-1}\left(\frac{r}{\diam(Y)}\right)<\pi/2\spand\al:=T<1,
	\end{align*}
	we see that $Y$ must contain the cone $\Con(x,u_x,\gm,\al\cdot\diam(Y))$, where $u_x$ is the unit length vector in the direction of $c$ from $x$, which satisfies the Uniform Cone Condition as desired.
\end{proof}
As a consequence of the previous proposition we see that if the collection 
\begin{align*}
	\sX=\set{X_v^{(n)}:v\in V_n, n\in\NN}
\end{align*}
satisfies the URR property and if each $X_v^{(n)}$ is convex for each $v\in V_n$ and each $n\in\NN$, then the Uniform Cone Condition is satisfied.
 
The main object of interest in the study of IFS or their generalizations is the limit set. To that end, for each $n\in\NN$ and each $\om\in I^n$ we define the level sets 
\begin{align*}
	Y_\om:=\phi_\om(X^{(n)}_{t(\om)})\sub X_{i(\om)}^{(0)} \spand Y_n:=\bigcup_{\om\in I^n}Y_\om.
\end{align*}
Then for $n\geq m$ we have $Y_n\sub Y_m\sub \union_{v\in V_0}X^{(0)}_v$. As $V_n$ is finite for each $n$, we have that the sets $Y_\om$ and $Y_n$ are compact.
\begin{definition}
	The \textit{limit set} of a NCGDMS $\Phi$ is defined to be the set 
	\begin{align*}
		J_\Phi:=\intersect_{n=1}^\infty\union_{\om\in I^n}\phi_\om(X^{(n)}_{t(\om)}).
	\end{align*}
	Note that $J_\Phi$ is contained in the disjoint union $\bigsqcup_{v\in V_0}X_v^{(0)}$. Furthermore, if the alphabets $I^{(n)}$ are each finite, then the limit set $J_\Phi$ is compact. However, if any of the sets $I^{(n)}$ are infinite, then the limit set is in general not closed, and in fact its closure may have a much larger Hausdorff dimension. It is well known that there are examples of infinite autonomous IFS and GDMS with limit sets which are dense but have Hausdorff dimension equal to zero. See, for example, \cite{mauldin_dimensions_1996}, \cite{mauldin_graph_2003}.
\end{definition}
Given $\om\in I^\infty$, the sets $Y_{\om\rvert_n}$, where 
\begin{align*}
	\om\rvert_n=\om_1\om_2\dots\om_n,
\end{align*} 
forms a decreasing sequence of compact sets whose diameters go to zero, thanks in part to the Uniform Contraction Principle and Diameter Condition. In fact, we have that the diameters go to zero exponentially fast as 
the following lemma shows.
\begin{lemma}\label{lem: diam go to zero}
	Given $\om\in I^\infty$, the sequence $\diam(Y_{\om\rvert_n})$ converges to zero exponentially fast.
\end{lemma}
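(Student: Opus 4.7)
The plan is to combine the proposition that was just proved (the mean-value-type inequality \eqref{MV like ineq}) with the Uniform Contraction Principle, the Bounded Distortion Property, and the subexponential growth of $\ol{d}_n$ coming from the Diameter Condition.

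First I would apply \eqref{MV like ineq} with $U = X^{(n)}_{t(\om_n)}$ to obtain
\begin{align*}
  \diam(Y_{\om|_n}) \;=\; \diam\bigl(\phi_{\om|_n}(X^{(n)}_{t(\om_n)})\bigr) \;\leq\; C\,\norm{D\phi_{\om|_n}}\,\diam(X^{(n)}_{t(\om_n)}) \;\leq\; C\,\ol{d}_n\,\norm{D\phi_{\om|_n}}.
\end{align*}
Next I would estimate $\norm{D\phi_{\om|_n}}$. By the chain rule (using that the derivative of a conformal map is a scalar multiple of an isometry, so $|D\phi_{\om|_n}(x)|$ factors as a product of the one-step derivatives along the orbit of $x$) and the simplified form of the Uniform Contraction assumption, $|D\phi_{\om|_n}(x)| \leq \eta^n$ pointwise. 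The Bounded Distortion Property then upgrades this to the sup-norm bound $\norm{D\phi_{\om|_n}} \leq K\eta^n$.

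Finally, the Diameter Condition tells us that $\tfrac{1}{n}\log\ol{d}_n\to 0$, so for any $\ep>0$ with $e^{\ep}\eta<1$ there exists $N$ with $\ol{d}_n \leq e^{\ep n}$ for all $n\geq N$. Combining the three bounds gives
\begin{align*}
  \diam(Y_{\om|_n}) \;\leq\; CK\,\ol{d}_n\,\eta^n \;\leq\; CK\,(e^{\ep}\eta)^n
\end{align*}
for all sufficiently large $n$, which is the claimed exponential decay.

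The proof is essentially a bookkeeping exercise, so there is no real obstacle; the only mildly delicate point is recognizing that the subexponential growth of $\ol{d}_n$ cannot destroy the exponential contraction coming from $\eta^n$, because one can always shave off an arbitrarily small $\ep$ from the contraction rate and still stay strictly below $1$.
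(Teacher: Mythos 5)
Your proof is correct and follows essentially the same route as the paper: apply \eqref{MV like ineq} to get $\diam(Y_{\om|_n})\leq C\ol{d}_n\norm{D\phi_{\om|_n}}$, bound $\norm{D\phi_{\om|_n}}\leq\eta^n$ via uniform contraction, and absorb the subexponential factor $\ol{d}_n\leq e^{\ep n}$ for an $\ep<\log(1/\eta)$. The only (harmless) redundancy is your Bounded Distortion step: since the pointwise bound $\absval{D\phi_{\om|_n}(x)}\leq\eta^n$ holds at every $x$, the sup-norm bound follows immediately and the extra factor $K$ is unnecessary.
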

\begin{proof}
	Equation \eqref{MV like ineq} gives that 
	\begin{align*}
		\diam(Y_{\om\rvert_n})\leq C\norm{D\phi_{\om\rvert_n}}\diam(X_{t(\om)}^{(n)}).
	\end{align*}
	Applying the Uniform Contraction Principle and Diameter Condition we see
	\begin{align*}
		\diam(Y_{\om\rvert_n})\leq C\eta^n\ol{d}_n.
	\end{align*}
	Let $0<\ep<\log(1/\eta)$. Now since $1/n\log\ol{d}_n\to 0$, we then have that $\ol{d}_n\leq e^{\ep n}$ for all $n$ sufficiently large. Inserting this into our previous estimate we now have that 
	\begin{align*}
			\diam(Y_{\om\rvert_n})\leq C(\eta e^\ep)^n.
	\end{align*}
	However, the quantity on the right hand side goes to zero, which completes the proof.
\end{proof}
Given that the sets $Y_{\om\rvert_n}$ are compact, the previous lemma gives that $\bigcap_{n\geq 1}Y_{\om|_{n}}$ is a singleton, and we denote its only element by $\pi_\Phi(\om)$. This association defines a projection map 
\begin{align*}
	\pi:=\pi_\Phi:I^\infty\to\bigsqcup_{v\in V_0}X_v^{(0)}.
\end{align*}
We end this section with the following elementary proposition which states that the limit set may also be thought of as the image of the set of infinite admissible words under the coding map $\pi$. Its proof is analogous to that of Lemma 2.4 of \cite{rempe-gillen_non-autonomous_2016}, however we shall include it here for the sake of completeness. 
\begin{proposition}\label{coding map limit set }
	$\pi_\Phi(I^\infty)=J_\Phi$.
\end{proposition}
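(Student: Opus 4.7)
The plan is to prove the two set inclusions separately. The easy direction $\pi_\Phi(I^\infty) \subseteq J_\Phi$ follows immediately from the definitions: for any $\om \in I^\infty$ and any $n \geq 1$, the point $\pi_\Phi(\om) = \bigcap_m Y_{\om|_m}$ lies in $Y_{\om|_n} \subseteq \bigcup_{\tau \in I^n} Y_\tau$, so intersecting over $n$ yields $\pi_\Phi(\om) \in J_\Phi$.

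For the reverse inclusion $J_\Phi \subseteq \pi_\Phi(I^\infty)$, I would fix $x \in J_\Phi$ and build an admissible code $\om \in I^\infty$ with $\pi_\Phi(\om) = x$. By definition of $J_\Phi$, for each $n$ there exists $\om^{(n)} \in I^n$ with $x \in Y_{\om^{(n)}}$, and since $Y_\tau \subseteq Y_{\tau|_k}$ for every $k \leq n$, each prefix $\om^{(n)}|_k$ lies in
\[
A_k := \set{\tau \in I^k : x \in Y_\tau}.
\]
If the alphabets were finite, a standard diagonal/compactness extraction on the prefixes would immediately yield $\om$. The real obstacle is that $I^{(k)}$ may be infinite, so direct pigeonhole is unavailable; overcoming this is the heart of the proof.

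To handle this, I would prove a key geometric lemma: $A_k$ is finite for every $k$, with cardinality bounded by a constant depending only on $d$ and the aperture $\gm'$ from \eqref{cone containment}. The Open Set Condition, iterated along length-$k$ admissible words, shows that the sets $\{\intr{Y_\tau}\}_{\tau \in I^k}$ are pairwise disjoint, so at most one $\tau \in A_k$ satisfies $x \in \intr{Y_\tau}$. For every other $\tau \in A_k$ we have $x \in \partial Y_\tau$, so $\phi_\tau^{-1}(x) \in \partial X^{(k)}_{t(\tau)}$, and applying the Uniform Cone Condition together with the cone containment \eqref{cone containment} produces an open cone inside $Y_\tau$ with vertex $x$, uniform aperture $\gm'$, and altitude comparable to $\diam(Y_\tau)$. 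Disjointness of the $Y_\tau$-interiors forces disjointness of these cones; their axial directions in $S^{d-1}$ must then be pairwise separated by angle at least $\gm'$, and a standard packing argument on the unit sphere bounds the count uniformly in $k$ and in $x$.

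With this finiteness in hand, the construction of $\om$ reduces to a diagonal extraction. Since $A_1$ is finite and each $\om^{(n)}|_1$ lies in it, some $\tau_1 \in A_1$ occurs as $\om^{(n)}|_1$ for infinitely many $n$; restricting to that subsequence and iterating at levels two, three, and so on, using the finiteness of each $A_{k+1}$, produces coherent prefixes $\tau_k \in A_k$ with $\tau_{k+1}$ extending $\tau_k$. These prefixes assemble into an infinite admissible word $\om \in I^\infty$ satisfying $x \in Y_{\om|_k}$ for every $k$, so that $x \in \bigcap_k Y_{\om|_k}$. Combined with Lemma \ref{lem: diam go to zero}, which forces this intersection to be the singleton $\{\pi_\Phi(\om)\}$, we conclude $\pi_\Phi(\om) = x$, completing the reverse inclusion.
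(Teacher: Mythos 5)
Your proposal is correct and follows essentially the same route as the paper's proof: both establish the easy inclusion directly, both argue that the set of length-$k$ admissible words $\tau$ with $x \in Y_\tau$ is finite via the Open Set Condition together with the Uniform Cone Condition (you flesh out the sphere-packing detail that the paper leaves implicit), and both extract an infinite admissible word from the resulting locally finite tree of prefixes — the paper invokes K\"onig's lemma explicitly, while your diagonal extraction is simply K\"onig's lemma spelled out by hand.
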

\begin{proof}
	Given that Lemma \ref{lem: diam go to zero} shows that the diameters of the sequence $Y_{\om\rvert_n}$ go to zero as $n$ goes to infinity, we have that the map $\pi$ is well defined. Furthermore the inclusion $\pi(I^\infty)\sub J_\Phi$ follows from the definition of $\pi$. Thus we have only to check the opposite inclusion. To that end, let $x\in J_\Phi$. If $\Phi$ is a finite system we obviously have that the set 
	\begin{align}\label{set om in In finite}
		F=\set{\om\in I^n:x\in Y_\om}
	\end{align}
	is finite, however if $\Phi$ is an infinite system the set given in \eqref{set om in In finite} is still finite. This follows from the Open Set Condition together with the Uniform Cone Condition.
	
	We now form a directed graph on $F$ by drawing an edge from $\om$ to $\ut$ if and only if $\ut=\om_1\dots\om_{\absval{\om}-1}$, that is $\ut$ is equal to $\om$ with the last symbol, $\om_{\absval{\om}}$ deleted. As this graph contains arbitrarily long directed paths originating at the the empty word, the degree of each vertex is finite. Applying K\"onig's lemma, we have an infinite word $\om$ such that $x\in Y_{\om\rvert_n}$ for each $n\in\NN$. As we have that $\pi(\om)=x$, the proof is now complete.
\end{proof}
\section{Subexponential Boundedness and Finite Primitivity}\label{Sec: Subexp}
In this section we define two properties which will be imperative in the sequel and investigate their properties and interconnections. 
\begin{definition}
	A NCGDMS is called \textit{subexponentially bounded} if 
	\begin{align*}
	\limit{n}{\infty}\frac{1}{n}\log\#I^{(n)}=0.
	\end{align*}
\end{definition}
Given that Bowen's formula holds for both infinite and finite autonomous GDMS and IFS, one would expect that this should be the case for non-autonomous systems as well, or at least certainly for finite non-autonomous systems. However, Rempe-Gillen and Urba\'nski showed in \cite{rempe-gillen_non-autonomous_2016} that the problem is not that simple. For finite systems, the growth rate of the alphabets $I^{(n)}$ is intricately related to the dimension of the limit set. In fact they give examples of finite systems, which are well behaved in all aspects other than growth rates of the $I^{(n)}$, for which Bowen's formula does not hold. As we will see, subexponential boundedness will be necessary for Bowen's formula to hold for a general finite NCGDMS without imposing other stronger assumptions.  
\begin{definition}
	We say that a NCGDMS $\Phi$ is \textit{finitely primitive} if there is a constant $p\in\NN$ such that for each $n\in\NN$, there is a finite set $\Lambda_n\sub I^{n+1,n+p}$ such that the following hold.
	\begin{enumerate}
		\item For all $a\in I^{(n)}$, $b\in I^{(n+p+1)}$ there is $\lm(a,b):=\lm\in \Lambda_n$ such that $a\lm b\in I^{n,n+p+1}$.
		\item There is some constant $Q>0$ such that for each $m\in\NN$ and for each $\lm\in\Lambda_m$ we have $Q\leq\norm{D\phi_\lm^{m+1,m+p}}$.
	\end{enumerate}
	If the system $\Phi$ is finitely primitive with respect to a particular value $p$, we may say that $\Phi$ is \textit{$p$-finitely primitive} if we wish to call attention to the value $p$.
\end{definition}
This property will be necessary to ensure that we are always able to continue extending a given string.
\begin{remark}
	If a system $\Phi$ is $p$-finitely primitive then for each $n\in\NN$ we have then that the entries of the matrix 
	\begin{align*}
	A_n^p:=\prod_{j=n}^{n+p-1}A^{(j)}
	\end{align*}
	are all positive. Furthermore, every NCIFS is $0$-finitely primitive, that is each letter at time $n+1$ is allowed to follow each letter at time $n$.
\end{remark}
One important difference between the non-autonomous graph directed systems and NCIFS is the condition of finite primitivity. In the case of NCIFS, as described in \cite{rempe-gillen_non-autonomous_2016}, each letter at time $n+1$ is allowed to follow each of the letters at time $n$, which ensures that each letter plays an equal role in forming the limit set. It is precisely the finite primitivity property that gives us the analogous notion for NCGDMS. We will now explore further properties of finite primitivity and their connections with subexponential boundedness, but first we establish some useful notation. 

For $\om\in I^n$ and $m\in\NN$ we let $L_\om^{n+1,n+m}$ denote the set of all words of length $m$ which are allowed to follow $\om$. More precisely, we have
\begin{align*}
	L_\om^{n+1,n+m}=\set{\gm\in I^{n+1,n+m}_{t(\om)}:\om\gm\in I^{n+m}}.
\end{align*} 
If $m=1$ or if $m=\infty$, we instead use the notation $L_\om^{(n+1)}$ or $L_\om^\infty$ respectively. Now for $m\in\NN$ define 
\begin{align*}
	\ul{G}_n^m:=\min\set{\#L_\om^{n+1,n+m}: \om\in I^n} \spand \ol{G}_n^m:=\max\set{\#L_\om^{n+1,n+m}: \om\in I^n},	
\end{align*}
and let 
\begin{align*}
	\xi_n^m:=\frac{\ol{G}_n^m}{\ul{G}_n^m}.
\end{align*} 
The quantities $\ol{G}_n^m$ and $\ul{G}_n^m$ respectively represent the greatest and least number of possible words of length $m$ that are allowed to follow any given letter in $I^{(n)}$. If $m=1$, we adopt the notation $\ul{G}_n$, $\ol{G}_n$, and $\xi_n$ instead. Since we are interested in only words with infinite extensions, we may assume that each letter has at least one follower, and so we have that $\ul{G}_n^m\geq 1$, which subsequently implies that 
\begin{align}\label{xi less than G up }
	1\leq \xi_n^m\leq \ol{G}_n^m
\end{align}
for all $n, m\in\NN$. 
\begin{observation}\label{obs:AlphaIneq1}
	The inequalities
	\begin{align*}
		\ul{G}_n\leq\ol{G}_n\leq\#I^{(n+1)}\leq \ol{G}_n\cdot\#I^{(n)}\spand
		\ol{G}_n^m\leq\ol{G}_n\cdots\ol{G}_{n+m-1}
	\end{align*}
	 are clear, and imply that 
	\begin{align*}
		\#I^{(n+m+1)}\leq \#I^{n,n+m+1}\leq \#I^{(n)}\cdot\ol{G}_n\cdots\ol{G}_{n+m}.	
	\end{align*}
\end{observation}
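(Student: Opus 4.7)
The plan is to verify each inequality by a direct counting argument, since every claim amounts to bookkeeping on the number of admissible extensions/continuations of words. I will proceed in the order listed and then combine the pieces to deduce the final chain.

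First, the inequality $\ul{G}_n \leq \ol{G}_n$ is immediate from the definitions, as the minimum of a collection of non-negative integers cannot exceed the maximum. Next, $\ol{G}_n \leq \#I^{(n+1)}$ follows because for every $\om \in I^n$ we have $L_\om^{(n+1)} \subseteq I^{(n+1)}$, so taking the maximum of $\#L_\om^{(n+1)}$ cannot exceed the size of the ambient alphabet. For $\#I^{(n+1)} \leq \ol{G}_n \cdot \#I^{(n)}$, I would use the standing convention that every letter in the alphabet appears in at least one infinite admissible word (which is implicit in working with $I^{n,n+m}$), so that each $b \in I^{(n+1)}$ has at least one predecessor word $\om \in I^n$ with $\om b \in I^{n+1}$. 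Mapping each $b$ to its last letter $\om_{|\om|}$ in $I^{(n)}$, the fiber over any $a \in I^{(n)}$ is contained in $L_\om^{(n+1)}$ for some $\om \in I^n$ with $\om_{|\om|}=a$, hence has size at most $\ol{G}_n$; the inequality follows.

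Next I would prove $\ol{G}_n^m \leq \ol{G}_n \cdots \ol{G}_{n+m-1}$ by induction on $m$. For any $\om \in I^n$ and any admissible extension $\gm \in L_\om^{n+1,n+m}$ of length $m$, write $\gm = \gm_{n+1}\,\gm'$ with $\gm' \in L_{\om\gm_{n+1}}^{n+2,n+m}$ of length $m-1$. There are at most $\ol{G}_n$ choices for $\gm_{n+1}$ and, by the inductive hypothesis applied to the word $\om\gm_{n+1} \in I^{n+1}$, at most $\ol{G}_{n+1}\cdots \ol{G}_{n+m-1}$ choices for $\gm'$; multiplying gives the claim.

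Finally, the chain $\#I^{(n+m+1)} \leq \#I^{n,n+m+1} \leq \#I^{(n)} \cdot \ol{G}_n\cdots \ol{G}_{n+m}$ results from combining the above. The first inequality is a surjectivity statement: by the same convention as above, the map $I^{n,n+m+1} \to I^{(n+m+1)}$ sending a word to its last letter is onto, so $\#I^{(n+m+1)} \leq \#I^{n,n+m+1}$. The second follows by iterating the one-step extension estimate just as in the induction for $\ol{G}_n^m$: enumerate an admissible word in $I^{n,n+m+1}$ by first choosing its initial letter in $I^{(n)}$ (at most $\#I^{(n)}$ options) and then successively choosing each subsequent letter, which contributes at most $\ol{G}_n, \ol{G}_{n+1}, \ldots, \ol{G}_{n+m}$ options in turn. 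The main conceptual point, and the only place where a subtlety arises, is the use of the convention that every letter of each $I^{(n)}$ is extendable to an infinite admissible word, ensuring $\ul{G}_n \geq 1$ and that the surjections used above are honest; this is consistent with the definition of $I^{m,n}$ given in the paper.
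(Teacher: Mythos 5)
Your proof is correct, and the paper gives no argument at all here (the inequalities are dismissed as ``clear''), so your careful bookkeeping is a legitimate filling-in of details rather than a divergent approach. One small point worth being explicit about: $\ol{G}_j$ is defined as a maximum over words $\om\in I^j$ (i.e.\ admissible words of length $j$ starting from time $1$), so when you iterate the one-step bound inside $I^{n,n+m+1}$ you are implicitly using that any partial word $\om|_n^j$ arising there has a backward extension $\al\in I^{n-1}$ (guaranteed by the definition of $I^{n,n+m+1}$), that the set of one-step followers of $\al\om|_n^j$ depends only on the last letter $\om_j$ together with forward extendability, and hence that the number of followers is bounded by $\#L^{(j+1)}_{\al\om|_n^j}\leq\ol{G}_j$. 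You invoke this implicitly; stating it once would remove the only genuine subtlety in the argument. The same remark applies to your surjectivity claim for $I^{n,n+m+1}\to I^{(n+m+1)}$, which uses that each letter of $I^{(n+m+1)}$ sits in some infinite admissible word emanating from time $1$ — this is precisely the paper's standing convention, so you are on solid ground.
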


\begin{observation}\label{obs:AlphaIneq2}
	If a system is $p$-finitely primitive, then by definition, for all $a\in I^{(n)}$ and $b\in I^{(n+p+1)}$ there is some $\lm\in I^{n+1,n+p}$ such that $a\lm b$ is admissible. Thus we have that 
	\begin{align*}
	\#I^{n,n+p+1}\geq \#I^{(n)}\cdot\#I^{(n+p+1)}.
	\end{align*} 
\end{observation}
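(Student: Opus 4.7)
The plan is to build an explicit injection $\Psi : I^{(n)} \times I^{(n+p+1)} \to I^{n, n+p+1}$ and read off the cardinality inequality as its immediate consequence. For each pair $(a,b)$, the hypothesis of $p$-finite primitivity supplies a connector $\lambda(a,b) \in \Lambda_n \subset I^{n+1, n+p}$ for which $a\lambda(a,b)b$ is admissible on the block from time $n$ to time $n+p+1$. I would then define $\Psi(a,b) := a\lambda(a,b)b$ and proceed to verify the two claims below.

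Two things need checking. First, $\Psi(a,b)$ must genuinely lie in $I^{n,n+p+1}$, i.e., appear as the restriction of some infinite admissible word, not merely in $E^{n,n+p+1}$. The admissibility of the finite block itself is handed to us by the definition of $\Lambda_n$. For the forward tail, I would iteratively reapply finite primitivity: pick an arbitrary $b_1 \in I^{(n+2p+2)}$, splice in the witness from $\Lambda_{n+p+1}$ joining $b$ to $b_1$, then the witness from $\Lambda_{n+2p+2}$ joining $b_1$ to some $b_2 \in I^{(n+3p+3)}$, and so on, producing an admissible $\gamma \in I^\infty_{n+p+2}$. A symmetric backward iteration through the alphabets $I^{(n-p-1)}, I^{(n-2p-2)}, \ldots$ yields $\alpha \in I^{n-1}$; if $n \le p+1$ no such backward extension is needed. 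Concatenating $\alpha$, $a\lambda(a,b)b$, and $\gamma$ realizes $\Psi(a,b)$ as a subword of an element of $I^\infty$, as required. Second, $\Psi$ is injective, because the first and last coordinates of the word $\Psi(a,b)$ are exactly $a$ and $b$, so distinct inputs yield distinct outputs.

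Combining these steps gives
\begin{align*}
\#I^{n, n+p+1} \ge \#\Psi\bigl(I^{(n)} \times I^{(n+p+1)}\bigr) = \#I^{(n)} \cdot \#I^{(n+p+1)},
\end{align*}
which is the stated inequality. There is no serious obstacle here; the definition of $p$-finite primitivity is tailor-made to furnish exactly these connectors, and the only mildly technical point is ensuring that the finite word $a\lambda(a,b)b$ admits infinite extensions on both sides, which is handled by the iterated application of primitivity described above.
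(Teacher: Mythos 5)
Your core construction---the injection $(a,b)\mapsto a\lambda(a,b)b$ on $I^{(n)}\times I^{(n+p+1)}$, with injectivity read off from the first and last letters---is exactly what the paper's observation rests on; the paper treats this as immediate from the definition of finite primitivity and gives no further argument, and your forward-tail construction (splicing connectors from $\Lambda_{n+p+1},\Lambda_{n+2p+2},\dots$) is fine. The step that fails as written is the backward extension. First, the definition of $I^{m,n}$ demands a prefix $\alpha\in I^{m-1}$, i.e.\ an admissible word running from time $1$ to time $m-1$, whenever $m\geq 2$; so the backward extension is superfluous only when $n=1$, not for all $n\leq p+1$. Second, iterating finite primitivity backward moves in jumps of exactly $p+1$ time steps, because $\Lambda_m$ only connects letters of $I^{(m)}$ to letters of $I^{(m+p+1)}$ and supplies no connectors over shorter gaps; starting from time $n$, your iteration lands on time $1$ only when $n\equiv 1\pmod{p+1}$, and otherwise strands you at some time $t_0$ with $2\leq t_0\leq p+1$ and no primitive way to reach time $1$.

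The repair does not use primitivity alone for the prefix. Either invoke the paper's standing convention that only letters occurring in infinite admissible words are retained (some such convention is genuinely needed: a letter $a\in I^{(n)}$ admitting no admissible prefix from time $1$ heads no word of $I^{n,n+p+1}$, and the stated inequality could then fail), and take $\alpha$ to be the initial segment, up to time $n-1$, of an infinite admissible word passing through $a$ at time $n$; or, when $n\geq p+2$, take the initial segment $\tau_1\cdots\tau_{n-p-1}$ of an arbitrary $\tau\in I^\infty$ and append a single connector $\lambda(\tau_{n-p-1},a)\in\Lambda_{n-p-1}$, which ends exactly at time $n-1$. With $\alpha$ obtained this way and your forward tail $\gamma$, the word $\alpha\, a\lambda(a,b)b\,\gamma$ is admissible, since admissibility is checked only on consecutive pairs, so $a\lambda(a,b)b\in I^{n,n+p+1}$ and your counting argument goes through unchanged.
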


\begin{observation} Since $p$-finite primitivity gives us that for each $a\in I^{(n)}$ and $b\in I^{(n+p+1)}$ there is a word of length $p$ from $a$ to $b$, we know that each $a\in I^{(n)}$ must have at least as many $p+1$ length extensions as there are letters in $I^{(n+p+1)}$, i.e. 
\begin{align*}
	\#L_a^{n+1,n+p+1}\geq \#I^{(n+p+1)}
\end{align*}
for each $a\in I^{(n)}$. Thus we have the inequality
	\begin{align}\label{ineq: obs 3.6}
	\#I^{(n+p+1)}\leq \ul{G}_n^{p+1}\leq \ol{G}_n^{p+1}\leq\#I^{n,n+p+1}.
	\end{align}
\end{observation}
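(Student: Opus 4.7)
The claim has three inequalities chained together, and my plan is to treat them in order, using only the definitions of $p$-finite primitivity, $L_a^{n+1,n+p+1}$, and $\ul{G}_n^{p+1}$, $\ol{G}_n^{p+1}$. The middle inequality $\ul{G}_n^{p+1}\leq \ol{G}_n^{p+1}$ is immediate from the fact that a minimum over a nonempty index set is bounded above by the corresponding maximum, so the real work is in the outer two bounds.

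For the first inequality $\#I^{(n+p+1)}\leq \ul{G}_n^{p+1}$, I would fix an arbitrary $a\in I^{(n)}$ and show that $\#L_a^{n+1,n+p+1}\geq \#I^{(n+p+1)}$; since this holds for every $a$, taking the minimum over $a\in I^{(n)}$ yields the bound on $\ul{G}_n^{p+1}$. To produce enough length-$(p+1)$ extensions of $a$, I would invoke $p$-finite primitivity: for each $b\in I^{(n+p+1)}$ there is some $\lm(a,b)\in\Lambda_n\subset I^{n+1,n+p}$ with $a\lm(a,b)b\in I^{n,n+p+1}$, hence $\lm(a,b)b\in L_a^{n+1,n+p+1}$. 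Then I would observe that the assignment $b\mapsto \lm(a,b)b$ is injective simply because distinct $b$'s produce words with distinct last letters; this gives $\#I^{(n+p+1)}$ distinct elements of $L_a^{n+1,n+p+1}$, which is exactly what is needed.

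For the final inequality $\ol{G}_n^{p+1}\leq \#I^{n,n+p+1}$, I would again fix $a\in I^{(n)}$ and set up the prepending map $\lm\mapsto a\lm$ from $L_a^{n+1,n+p+1}$ into $I^{n,n+p+1}$. This map is well defined (the image lies in $I^{n,n+p+1}$ by definition of $L_a^{n+1,n+p+1}$) and injective (distinct $\lm$ give distinct $a\lm$), so $\#L_a^{n+1,n+p+1}\leq \#I^{n,n+p+1}$ for each $a$; taking the maximum over $a\in I^{(n)}$ gives the desired bound.

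There is no real obstacle here: the entire statement is a bookkeeping consequence of the definitions, with the only substantive content being the construction of the injection $b\mapsto\lm(a,b)b$ powered by $p$-finite primitivity. The point worth emphasizing in the write-up is that even though $\lm(a,b)$ is in general not uniquely determined, any choice of such a $\lm$ suffices, and injectivity is guaranteed by the tail letter $b$ rather than by any property of $\lm$.
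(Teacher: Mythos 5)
Your proposal is correct and fills in exactly the argument the paper's observation sketches: the injection $b\mapsto\lm(a,b)b$ (well-defined and injective because of the tail letter) gives $\#L_a^{n+1,n+p+1}\geq\#I^{(n+p+1)}$, the middle inequality is trivial, and prepending $a$ gives the final bound. Your note that any choice of $\lm(a,b)$ works and that injectivity comes from the last letter is precisely the right point to flag.
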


Combining all of the previous inequalities, we have the summary
\begin{align*}
\ul{G}_{n+p}\leq\ol{G}_{n+p}\leq\#I^{(n+p+1)}\leq \ul{G}_n^{p+1}\leq \ol{G}_n^{p+1}\leq\#I^{n,n+p+1}\leq \#I^{(n)}\cdot\ol{G}_n\cdots\ol{G}_{n+p}.
\end{align*}
The following result follows immediately from the first inequality of Observation \ref{obs:AlphaIneq1}.
\begin{corollary}\label{cor:subexp bdd implies sub G_n bdd}
	If a system $\Phi$ is subexponentially bounded then $\lim_{n\to\infty}\frac{1}{n}\log\ol{G_n}=0$.
\end{corollary}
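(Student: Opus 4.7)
The plan is to simply chase the inequality chain established just before the corollary statement, together with the hypothesis of subexponential boundedness. From the first inequality of Observation \ref{obs:AlphaIneq1} we immediately have $\ol{G}_n \leq \#I^{(n+1)}$ for every $n\in\NN$, and from the preceding discussion (every letter with an infinite extension has at least one follower) we have $\ol{G}_n \geq \ul{G}_n \geq 1$, which gives the lower bound $\tfrac{1}{n}\log \ol{G}_n \geq 0$.

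Next I would take logarithms and divide by $n$ in the upper bound to obtain
\begin{align*}
	0 \leq \frac{1}{n}\log \ol{G}_n \leq \frac{1}{n}\log \#I^{(n+1)} = \frac{n+1}{n}\cdot\frac{1}{n+1}\log \#I^{(n+1)}.
\end{align*}
Since $\Phi$ is subexponentially bounded, by definition $\tfrac{1}{n+1}\log \#I^{(n+1)} \to 0$ as $n\to\infty$, and since $(n+1)/n \to 1$, the right-hand side tends to zero. The squeeze theorem then yields $\lim_{n\to\infty}\tfrac{1}{n}\log \ol{G}_n = 0$, which is exactly the claim.

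There is no real obstacle here; the corollary is a one-line consequence of Observation \ref{obs:AlphaIneq1} and the definition of subexponential boundedness, which is presumably why the authors label it as a corollary and signal that it follows immediately. The only mild care needed is to replace $\#I^{(n+1)}$ by $\#I^{(n)}$ via an index shift inside the logarithm, but this is absorbed by the harmless factor $(n+1)/n \to 1$.
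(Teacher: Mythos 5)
Your argument is correct and is exactly the paper's: the corollary is stated as an immediate consequence of the first inequality of Observation \ref{obs:AlphaIneq1}, namely $\ol{G}_n\leq\#I^{(n+1)}$, combined with subexponential boundedness, which is precisely the squeeze you carry out (the $(n+1)/n$ index shift and the lower bound $\ol{G}_n\geq 1$ are the only details, and you handle them correctly).
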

The next lemma supplies a sort of converse to the previous corollary. 
\begin{lemma}\label{lem:FinPrimPlusGnSubexp->SubexpBdd}
	If a system is $p$-finitely primitive and $\limsupty{n}\frac{1}{n}\log\ol{G}_n\leq L$ then
	\begin{align*}
	\limsupty{n}\frac{1}{n}\log\#I^{(n)}\leq Lp.
	\end{align*}
\end{lemma}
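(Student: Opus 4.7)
The plan is to invoke the chain of inequalities tabulated just before the statement, in particular
\begin{align*}
\#I^{(n+p+1)} \;\leq\; \ol{G}_n^{p+1} \;\leq\; \ol{G}_n\cdot\ol{G}_{n+1}\cdots\ol{G}_{n+p}.
\end{align*}
The first step uses $p$-finite primitivity: for each fixed $a\in I^{(n)}$, every letter $b\in I^{(n+p+1)}$ is reached by a length-$(p+1)$ admissible extension $\lm(a,b)\,b$ of $a$, and distinct $b$ produce distinct extensions (they differ in their last letter), so the number of length-$(p+1)$ extensions of $a$ is at least $\#I^{(n+p+1)}$. The second step is Observation~\ref{obs:AlphaIneq1}, which greedily bounds the number of length-$(p+1)$ extensions of any word by a telescoping product of the $\ol{G}_{n+j}$.

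With this in hand, fix $\ep>0$ and use the hypothesis to choose $N$ so that $\log\ol{G}_n\leq(L+\ep)\,n$ for all $n\geq N$. Writing $m=n+p+1$,
\begin{align*}
\log\#I^{(m)} \;\leq\; \sum_{j=0}^{p}\log\ol{G}_{n+j} \;\leq\; (L+\ep)\Bigl[(p+1)n+\tfrac{p(p+1)}{2}\Bigr].
\end{align*}
Dividing by $m$ and letting $m\to\infty$, the right-hand side tends to $(L+\ep)(p+1)$. Sending $\ep\downarrow 0$ then yields
\begin{align*}
\limsupty{m}\frac{1}{m}\log\#I^{(m)}\;\leq\;L(p+1),
\end{align*}
which is the desired conclusion (modulo one factor of $\ol{G}$, as discussed below).

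There is no serious obstacle here: the content of the proof lies entirely in the combinatorial counting already assembled before the statement, combined with standard limsup manipulations. The only point requiring care is the bookkeeping of factors of $\ol{G}$ produced by the chain: the natural count yields $p+1$ factors, matching the length of the admissible extension $\lm(a,b)\,b$ forced by $p$-finite primitivity, and so the argument delivers $L(p+1)$ rather than the stated $Lp$. Either a sharper counting argument must be used to absorb one factor (for instance, fixing the last letter $b$ and counting only the connector $\lm$ contributes a product of $p$ factors of $\ol{G}$), or the bound in the statement should be read as $L(p+1)$; in the regime $L=0$ the two are equivalent, which is the only use that will be made of this lemma in the sequel.
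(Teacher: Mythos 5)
Your argument takes the same route as the paper's own proof: combine \eqref{ineq: obs 3.6} with Observation \ref{obs:AlphaIneq1} and carry out the standard limsup bookkeeping, and your computation of the resulting rate is correct. Moreover, the discrepancy you flag is a genuine issue with the statement/proof in the paper rather than a defect of your argument: the paper estimates only $\ol{G}_n^{p}$ by the $p$-fold product $\ol{G}_n\cdots\ol{G}_{n+p-1}$ (hence rate $Lp$), but the quantity actually needed in \eqref{ineq: obs 3.6} is $\ol{G}_n^{p+1}$, which by the identical computation has rate $L(p+1)$; the concluding sentence of the paper's proof (``since $\ol{G}_n^{p+1}$ grows subexponentially\dots'') only makes literal sense when $L=0$. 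So for $L>0$ the route used both by you and by the paper honestly yields $L(p+1)$, not $Lp$; when $L=0$ — the only case needed for Corollary \ref{cor: fin prim + G implies subexp} and the main theorem — the two coincide, although for $L>0$ the difference would propagate to the explicit constant $p^2+p+1$ appearing in Theorem \ref{thm:LBforHDJv}. One caution about your parenthetical repair: fixing the last letter $b$ and counting only the connectors does not absorb the extra factor, because the map $b\mapsto\lambda(a,b)$ need not be injective — many letters $b$ may share the same connector — so $\#I^{(n+p+1)}\leq\ol{G}_n^{p}$ does not follow from it; your injection $b\mapsto\lambda(a,b)\,b$ into the length-$(p+1)$ extensions of a fixed $a\in I^{(n)}$ is the correct count, and with it $L(p+1)$ is what the method gives.
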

\begin{proof}
	In light of the previous observation, we see that it suffices to show that $\ol{G}_n^p$ also grows at most exponentially with $n$. For $\ep>0$, Observation \ref{obs:AlphaIneq1} gives us that 
	\begin{align*}
	\log\ol{G}_n^p&\leq\sum_{i=n}^{n+p-1}\log\ol{G}_i\leq (L+\ep)\cdot(n+n+1+\dots+n+p-1)\\
	&=(L+\ep)\cdot[np+\frac{p(p-1)}{2}]
	\end{align*}
	for $n$ sufficiently large. Thus we see
	\begin{align*}
	\frac{1}{n}\log\ol{G}_n^p\leq Lp+\ep p+\frac{(L+\ep)\cdot p(p-1)}{2n}.
	\end{align*}
	As $\ep>0$ was arbitrary, we have that 
	\begin{align*}
		\limsupty{n}\frac{1}{n}\log\ol{G}_n^p\leq Lp
	\end{align*} 
	as desired. Since $\ol{G}_n^{p+1}$ grows subexponentially and given \eqref{ineq: obs 3.6}, we must have that 
	\begin{align*}
	\limsupty{n}\frac{1}{n}\log\#I^{(n)}\leq Lp,
	\end{align*}
	which finishes the proof.
	
\end{proof}
The previous lemma now immediately implies the following corollary.
\begin{corollary}\label{cor: fin prim + G implies subexp}
	If a NCGDMS $\Phi$ is finitely primitive and $\limty{n}\frac{1}{n}\log\ol{G}_n=0$, then $\Phi$ is subexponentially bounded.
\end{corollary}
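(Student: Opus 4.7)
My plan is to invoke Lemma \ref{lem:FinPrimPlusGnSubexp->SubexpBdd} directly, since the corollary is essentially a specialization of it to the case $L=0$. First I would note that by hypothesis $\Phi$ is finitely primitive, so there exists some $p\in\NN$ with respect to which $\Phi$ is $p$-finitely primitive. Next I would observe that the assumption $\limty{n}\frac{1}{n}\log\ol{G}_n=0$ in particular implies $\limsupty{n}\frac{1}{n}\log\ol{G}_n\leq L$ with $L=0$. Applying the lemma then yields
\begin{align*}
\limsupty{n}\frac{1}{n}\log\#I^{(n)}\leq Lp=0.
\end{align*}

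To complete the argument I would supply the matching lower bound: each alphabet $I^{(n)}$ is nonempty (otherwise there would be no admissible words at time $n$, contradicting finite primitivity, which requires that each letter in $I^{(n)}$ admit further extensions), so $\#I^{(n)}\geq 1$ and hence $\frac{1}{n}\log\#I^{(n)}\geq 0$ for every $n$. Combining this with the upper bound above gives
\begin{align*}
\limty{n}\frac{1}{n}\log\#I^{(n)}=0,
\end{align*}
which is by definition subexponential boundedness. I do not anticipate any real obstacle: the entire content of the statement rests in Lemma \ref{lem:FinPrimPlusGnSubexp->SubexpBdd}, and this corollary simply packages the $L=0$ case together with the trivial lower bound coming from nonemptiness of the alphabets.
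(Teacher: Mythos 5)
Your proposal is correct and matches the paper's argument: the paper derives this corollary immediately from Lemma \ref{lem:FinPrimPlusGnSubexp->SubexpBdd} with $L=0$, exactly as you do. Your added remark that $\#I^{(n)}\geq 1$ forces the limit to equal zero is the (implicit) finishing touch in the paper as well, so there is nothing to change.
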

Together Corollaries \ref{cor:subexp bdd implies sub G_n bdd} and \ref{cor: fin prim + G implies subexp} give that for a finitely primitive NCGDMS $\Phi$, subexponential boundedness is equivalent to the condition that
\begin{align*}
	\limty{n}\frac{1}{n}\log\ol{G}_n=0.
\end{align*}
In the sequel we will see that this condition plays the analogous role in the proof of Bowen's formula for finite NCGDMS as subexponential boundedness did for the respective proofs in the setting of finite NCIFS of \cite{rempe-gillen_non-autonomous_2016}. 
The following proposition shows that, without loss of generality, we may assume that any finitely primitive NCGDMS is in fact $1$-finitely primitive, i.e. given two letters $a\in I^{(n)}$ and $b\in I^{(n+2)}$ there is some $c\in \Lm_n\sub I^{(n+1)}$ such that the word $acb$ is admissible for every $n\geq 1$.  
\begin{proposition}\label{prop:1 fin prim}
	If $\Phi$ is a $p$-finitely primitive NCGDMS, then there exists a subsystem $\hat{\Phi}$ of $\Phi$ that is $1$-finitely primitive such that $J_{\hat{\Phi}}=J_\Phi$. 
\end{proposition}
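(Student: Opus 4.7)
The plan is to realize $\hat\Phi$ by coarsening the time indexing of $\Phi$: group every $p$ consecutive time steps into a single super-step. Concretely, set $\hat V_n := V_{np}$ and $\hat\cX^{(n)} := \{X_v^{(np)} : v \in V_{np}\}$, and take the alphabet $\hat I^{(n)} := I^{(n-1)p+1,\,np}$, i.e., the set of all admissible $\Phi$-words of length $p$ spanning times $(n-1)p+1$ through $np$. To each $\hat a = a_1 \cdots a_p \in \hat I^{(n)}$ I would associate the composition
\[
\hat\phi_{\hat a}^{(n)} := \phi_{a_1}^{((n-1)p+1)} \circ \cdots \circ \phi_{a_p}^{(np)} : X_{t(a_p)}^{(np)} \to X_{i(a_1)}^{((n-1)p)},
\]
and declare $\hat A^{(n)}_{\hat a, \hat b} = 1$ iff $\hat a\hat b$ is admissible in $\Phi$. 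The seven NCGDMS axioms then transfer directly: Conformality and the Open Set, Geometry, and Uniform Cone Conditions are inherited because every $\hat\Phi$-space is a $\Phi$-space at time $np$; Uniform Contraction holds with rate $\eta^p$ by the chain rule; Bounded Distortion carries over with the same constant $K$ since each $\hat\phi_{\hat a}^{(n)}$ coincides with $\phi_{\hat a}$ for the admissible $\Phi$-word $\hat a$; and the Diameter Condition together with the $\#\hat V_n$ bound survive because passing to the subsequence $(np)_n$ preserves subexponential growth.

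The core step is $1$-finite primitivity. Fix $\hat a \in \hat I^{(n)}$ and $\hat c \in \hat I^{(n+2)}$, and write $\hat a = a_1 \cdots a_p$, $\hat c = c_1 \cdots c_p$. The letters $a_p \in I^{(np)}$ and $c_1 \in I^{(np+p+1)}$ are separated by exactly $p+1$ time steps, so $p$-finite primitivity of $\Phi$ applied at level $np$ produces $\lambda \in \Lambda_{np} \subseteq I^{np+1,\,np+p}$ such that $a_p \lambda c_1$ is admissible. Setting $\hat\lambda := \lambda$ gives an element of $\hat I^{(n+1)}$ for which $\hat a \hat\lambda \hat c$ is admissible in $\hat\Phi$, and the derivative bound $Q \leq \norm{D\phi_\lambda} = \norm{D\hat\phi_{\hat\lambda}^{(n+1)}}$ carries over verbatim, so $\hat\Lambda_n := \Lambda_{np}$ witnesses $1$-finite primitivity of $\hat\Phi$ with the same constant $Q$.

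Finally, the equality $J_{\hat\Phi} = J_\Phi$ reduces to a regrouping tautology: every $\omega \in I^\infty$ splits uniquely into length-$p$ blocks forming an admissible sequence in $\hat I^\infty$ whose partial compositions coincide with $\phi_{\omega|_{np}}$, and conversely every admissible sequence in $\hat I^\infty$ unpacks to an element of $I^\infty$, so the two projection maps have the same image. The subtlest point to guard against is that $\hat I^{(n)}$ must consist only of words appearing inside some infinite admissible word, as the NCGDMS formalism demands; this is automatic since each $\hat a \in \hat I^{(n)}$ already has that property in $\Phi$ and the regrouping preserves infinite extensions on both sides. The main obstacle will not be any single step but rather keeping the indexing bookkeeping clean across the block reparametrization, particularly when checking that the connecting word $\lambda$ supplied by the hypothesis lands in $\hat I^{(n+1)}$ with admissibility holding at both the $a_p\lambda_1$ and $\lambda_p c_1$ interfaces simultaneously.
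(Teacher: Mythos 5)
Your proposal is correct and takes essentially the same approach as the paper: both define $\hat{I}^{(n)} := I^{(n-1)p+1,\,np}$, compose the corresponding maps, set $\hat{A}^{(n)}$ by the junction condition $A^{(np)}_{t(\alpha)i(\beta)}$, and establish $1$-finite primitivity by applying $p$-finite primitivity of $\Phi$ to the last letter of $\hat{a}$ (at time $np$) and the first letter of $\hat{c}$ (at time $(n+1)p+1$), taking $\hat{\Lambda}_n := \Lambda_{np}$ and $\hat{Q} := Q$. Your write-up is somewhat more explicit than the paper's about verifying the NCGDMS axioms transfer to the block system and about why the regrouped words still admit infinite extensions, but the underlying construction and the key step are identical.
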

\begin{proof}
	First we note that if $\Phi$ is $p$-finitely primitive then for fixed $n\in\NN$ and $a\in I^{(n)}$ there is some $\lm(a,b)\in\Lambda_n\sub I^{n+1,n+p}$ such that $a\lm(a,b) b$ is admissible for each $b\in I^{(n+p+1)}$. With that in mind, we create new alphabets
	\begin{align*}
	\hat{I}^{(n)}:=I^{(n-1)p+1,np} \quad \text{ for } n\geq 1.
	\end{align*}	
	Making the appropriate modifications by taking new incidence matrices
	\begin{align*}
	\hat{A}^{(n)}:\hat{I}^{(n)}\times \hat{I}^{(n+1)}\to \set{0,1}	
	\end{align*} 
	given by 
	\begin{align*}
	\hat{A}^{(n)}_{\al\bt}=A^{(np)}_{t(\al)i(\bt)}, 
	\end{align*}
	and letting 
	\begin{align*}
	\hat{\Phi}^{(n)}=\set{\hat{\phi}_e^{(n)}:=\phi_{t(e)}^{(np)}\circ\dots\circ\phi_{i(e)}^{((n-1)p+1)}:X_{t(e)}^{(np)}\to X_{i(e)}^{((n-1)p+1)}}_{e\in \hat{I}^{(n)}},
	\end{align*}
	$\hat{\Phi}$ becomes a NCGDMS subsystem of $\Phi$ such that $J_{\hat{\Phi}}=J_\Phi$. 
	
	Moreover $\hat{\Phi}$ is $1$-finitely primitive. To see this, let $\al\in \hat{I}^{(n)}$ and $\bt\in\hat{I}^{(n+2)}$. Setting $a:=t(\al)\in I^{(np)}$ and $b:=i(\bt)\in I^{((n+1)p+1)}$, then by the $p$-finite primitivity of $\Phi$ there is some 
	\begin{align*}
	\lm(a,b)\in\Lm_{np}\sub I^{np+1,(n+1)p}
	\end{align*} 
	such that $a\lm(a,b)b$ is admissible in $\Phi$. Taking
	\begin{align*}
	\gm:=\lm(a,b)\in I^{np+1,(n+1)p}=\hat{I}^{(n+1)}
	\end{align*}     
	we have that $\al\gm\bt$ is admissible in $\hat{\Phi}$. Now letting 
	\begin{align*}
	\hat{\Lm}_n:=\Lm_{np}
	\end{align*}
	and $\hat{Q}:=Q$, from the definition of finite primitivity, shows that $\hat{\Phi}$ is $1$-finitely primitive.
	
\end{proof}
\begin{remark}
	If we take 
	\begin{align*}
	\hat{\ol{G}}_n=\ol{G}_n^p,
	\end{align*}
	in the above proof, then in light of the proof of Lemma \ref{lem:FinPrimPlusGnSubexp->SubexpBdd} and Corollary \ref{cor: fin prim + G implies subexp} taken together, we note that that $\hat{\Phi}$ is in fact subexponentially bounded if $\Phi$ is subexponentially bounded.
\end{remark}

\begin{remark}
	Notice that we are unable to say that any letter at time $n+1$ is allowed to follow any letter at time $n$ as it may well be the case that the codomain and domain of the respective associated functions may not be equal.
\end{remark}

\begin{remark}
	Despite the simplifying nature of Proposition \ref{prop:1 fin prim}, in the sequel we will generally still work with $p$-finitely primitive systems as doing so will not muddy the proofs, but in fact help us to keep better track of certain indices. 
\end{remark}

\section{Pressure and Bowen Dimension}\label{Sec: Pressure}
In this section we introduce the pressure and partition functions as well as the Bowen dimension. 
\begin{definition}
For each $m,n\in\NN$ and each $t\in [0,d]$ we define the following partition functions:
\begin{align*}
	Z_{(n)}(t):=\sum_{\om\in I^{(n)}}\norm{D\phi_\om^{(n)}}^t \spand
	Z_{m,n}(t):=\sum_{\om\in I^{m,n}}\norm{D\phi_\om^{m,n}}^t.
\end{align*}
If $m=1$ then we denote $Z_{1,n}(t)$ simply by $Z_n(t)$. Now we define the lower and upper pressure functions respectively to be 
\begin{align*}
\ul{P}(t):=\ul{P}^\Phi(t):=\liminfty{n} \frac{1}{n} \log Z_n(t)
\spand \ol{P}(t):=\ol{P}^\Phi(t):=\limsup_{n\to\infty} \frac{1}{n} \log Z_n(t).
\end{align*}
\end{definition}
\begin{remark}
	Notice that we can define $Z_n(t)$ differently if we sum over all letters stemming from a particular vertex, in other words for $n,k\in\NN$ we can write 
	\begin{align*}
		Z_{n+k;v}(t)=\ds\sum_{\om\in I^n_v}\sum_{\ut\in L^{n+1,n+k}_\om}\norm{D\phi_{\om\ut}}^t
		\spand
		Z_n(t)=\sum_{v\in V_n}Z_{n;v}(t),
	\end{align*}
	where 
	\begin{align*}
		Z_{n;v}(t):=\ds\sum_{\om\in I^n_v}\norm{D\phi_\om}^t.
	\end{align*}
\end{remark}
We now come to the following definition which will be our main focus in the sequel. 
\begin{definition}
	The \textit{Bowen dimension} of a given NCGDMS $\Phi$ is defined to be the number
	\begin{align*}
	B_\Phi&:=\sup\set{t\geq 0:\ul{P}(t)\geq 0}=\inf\set{t\geq 0: \ul{P}(t) \leq 0}=\sup\set{t\geq 0:Z_n(t)\to\infty}.	
	\end{align*}
If $\HD(J_\Phi)=B_\Phi$ we say that \textit{Bowen's formula holds}.	
\end{definition}
\begin{remark}
	We remark that unlike the case of autonomous IFS and autonomous GDMS which are strongly regular, the pressure function may not actually ever be equal to zero, however we are still interested in the exact moment when the pressure function changes from being positive to negative. 
\end{remark}
As we shall see shortly, the Bowen dimension serves as an upper bound for the Hausdorff dimension the limit set of a NCGDMS. Following the ideas of \cite{mauldin_dimensions_1996}, we can use the pressure and partition functions to find similar lower bounds for the Hausdorff dimension, which while quite often are strict lower bounds, are quite simple to calculate. 
We define the following two numbers
\begin{align*}
	\theta_N&=\inf\set{t\geq 0: Z_{(n)}(t)<\infty \text{ for all }n\in\NN}\spand
	\theta_\Phi=\inf\set{t\geq 0:\ul{P}(t)<\infty}.
\end{align*} 
Then by definition we have that 
\begin{align}\label{thetaN lower bound ineq}
	0\leq\theta_N\leq \theta_\Phi\leq B_\Phi\leq d.
\end{align}
It is also worth mentioning that $\theta_N$, while providing a weaker lower bound, is typically easier to calculate than $\theta_\Phi$. In Section \ref{Sec: Examples} we will use these lower bounds to estimate the dimension of the Julia set for non-autonomous affine perturbations of an elliptic function. 

The following lemma assures us that the non-autonomous pressure functions still have the properties one might expect. 
\begin{lemma}
The lower pressure function is strictly decreasing on the interval $[\theta_\Phi,d]$ and infinite for all $0\leq t<\theta_\Phi$.
\end{lemma}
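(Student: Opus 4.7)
The plan is to exploit the uniform contraction together with the Bounded Distortion Property to establish a one-sided linear comparison between $\log Z_n(t)$ and $\log Z_n(s)$ when $s<t$, then harvest both monotonicity and strictness from the negative slope produced by $\log\eta<0$.

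First I would prove the basic inequality that for any $0\leq s<t$, $n\in\NN$, and $\om\in I^n$, the Bounded Distortion Property and the consequence of Uniform Contraction $\norm{D\phi_\om}\leq K\eta^n$ give
\begin{align*}
\norm{D\phi_\om}^{t}=\norm{D\phi_\om}^{s}\cdot\norm{D\phi_\om}^{t-s}\leq K^{t-s}\eta^{n(t-s)}\norm{D\phi_\om}^{s}.
\end{align*}
Summing over $\om\in I^n$ yields $Z_n(t)\leq K^{t-s}\eta^{n(t-s)}Z_n(s)$, and hence
\begin{align*}
\frac{1}{n}\log Z_n(t)\leq \frac{(t-s)\log K}{n}+(t-s)\log\eta+\frac{1}{n}\log Z_n(s).
\end{align*}
Passing to $\liminf$ on both sides (and noting that the right-hand side is bounded below when $\ul{P}(s)<\infty$) produces the key comparison
\begin{align*}
\ul{P}(t)\leq \ul{P}(s)+(t-s)\log\eta.
\end{align*}

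Next I would deduce strict monotonicity on $[\theta_\Phi,d]$. For $\theta_\Phi\leq s<t\leq d$, pick $s'\in(\theta_\Phi,s]$ sufficiently close to $\theta_\Phi$ so that $\ul{P}(s')<\infty$, which is possible by the definition of $\theta_\Phi$ as an infimum; then the comparison above gives $\ul{P}(s)\leq \ul{P}(s')+(s-s')\log\eta<\infty$, so $\ul{P}(s)$ is finite for every $s>\theta_\Phi$. Applying the comparison again with this finite value of $\ul{P}(s)$ and $\log\eta<0$ forces
\begin{align*}
\ul{P}(t)\leq \ul{P}(s)+(t-s)\log\eta<\ul{P}(s).
\end{align*}
For the boundary case $s=\theta_\Phi$: either $\ul{P}(\theta_\Phi)<\infty$, in which case the same inequality applies directly, or $\ul{P}(\theta_\Phi)=\infty$, in which case $\ul{P}(\theta_\Phi)>\ul{P}(t)$ trivially for any $t>\theta_\Phi$, since the latter is finite. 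Either way strict decrease on $[\theta_\Phi,d]$ is established.

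Finally, the claim that $\ul{P}(t)=\infty$ for $0\leq t<\theta_\Phi$ follows directly from the definition of $\theta_\Phi$ as the infimum of the set $\{t\geq 0:\ul{P}(t)<\infty\}$: any $t$ strictly below this infimum cannot lie in the set, so $\ul{P}(t)=\infty$. There is no genuine obstacle here; the only subtle point is making sure the finiteness of $\ul{P}(s)$ is available before invoking the comparison inequality, and this is handled by choosing $s'$ slightly above $\theta_\Phi$ as described. The negativity of $\log\eta$ supplied by the Uniform Contraction hypothesis is the essential mechanism that upgrades monotonicity to strict monotonicity.
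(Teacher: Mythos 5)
Your proof is correct and follows essentially the same route as the paper: both compare $Z_n(t)$ with $Z_n(s)$ via the uniform contraction bound $\norm{D\phi_\om}\leq\eta^n$ (the extra factor $K$ from bounded distortion that you insert is superfluous but harmless), pass to the liminf to get $\ul{P}(t)\leq\ul{P}(s)+(t-s)\log\eta$, and read off the second claim directly from the definition of $\theta_\Phi$. Your added care about finiteness of $\ul{P}(s)$ for $s>\theta_\Phi$ is a reasonable tidying of a point the paper leaves implicit, not a different argument.
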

\begin{proof}
The second claim follows from the definition of $\theta_\Phi$, so it suffices to consider $t>\theta_\Phi$ and let $\ep>0$. As $t>\theta_\Phi$ we have that $\limsupty{n}Z_n(t)<\infty$. Then  
\begin{align*}
	Z_n(t+\ep)=\ds\sum_{\om\in I^n}\norm{D\phi_\om}^{t+\ep}\leq \sum_{\om\in I^n}\eta^{n\ep}\norm{D\phi_\om}^t=\eta^{n\ep}Z_n(t).
\end{align*}
Thus we have 
\begin{align*}
	\ul{P}(t+\ep)=\ds\liminf_{n\to\infty} \frac{1}{n} \log Z_n(t+\ep)\leq \liminf_{n\to\infty} \frac{1}{n} \log(\eta^{n\ep}Z_n(t))=\ep\log\eta + \ul{P}(t)\leq \ul{P}(t),
\end{align*}
which finished the proof.
\end{proof}

\begin{remark}
Similar statements and proofs show that the upper pressure function as well as the upper and lower pressure functions for a given vertex $v\in V_0$.
\end{remark}
In the next lemma we show that the Bowen dimension of a NCGDMS $\Phi$ is always an upper bound of the Hausdorff dimension of the limit set $J_\Phi$.
\begin{lemma}\label{lem:HDlessB}
If $\Phi$ is a NCGDMS then
\begin{align*}
	\HD(J_\Phi)\leq B_\Phi.		
\end{align*} 
\end{lemma}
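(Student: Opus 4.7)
The plan is the standard Bowen-type upper bound via the natural cylinder covering. For each $n \in \mathbb{N}$, the family $\{\phi_\om(X_{t(\om)}^{(n)})\}_{\om \in I^n}$ covers $J_\Phi$ by Proposition \ref{coding map limit set }. By inequality \eqref{MV like ineq} together with the Diameter Condition,
\begin{align*}
\diam\bigl(\phi_\om(X_{t(\om)}^{(n)})\bigr) \leq C\,\norm{D\phi_\om}\,\ol{d}_n,
\end{align*}
and by Uniform Contraction combined with Lemma \ref{lem: diam go to zero}, these diameters tend to zero uniformly in $\om \in I^n$ as $n \to \infty$. Consequently, for any $t \in [0,d]$, setting $\delta_n := \max_{\om \in I^n}\diam(\phi_\om(X_{t(\om)}^{(n)})) \to 0$, we obtain
\begin{align*}
\mathcal{H}^t_{\delta_n}(J_\Phi) \;\leq\; \sum_{\om \in I^n} \diam\bigl(\phi_\om(X_{t(\om)}^{(n)})\bigr)^t \;\leq\; C^t\,\ol{d}_n^{\,t}\,Z_n(t).
\end{align*}

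Now fix any $t > B_\Phi$. First I would argue that $\ul{P}(t) < 0$ strictly. Indeed, by the definition of $B_\Phi$ as an infimum, there exists $s \in (B_\Phi, t)$ with $\ul{P}(s) \leq 0$; by the strict monotonicity established in the preceding lemma, $\ul{P}(t) < \ul{P}(s) \leq 0$. Hence there exist $c > 0$ and a subsequence $n_k \to \infty$ along which $Z_{n_k}(t) \leq e^{-c n_k}$.

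Combining the two estimates along this subsequence,
\begin{align*}
\mathcal{H}^t_{\delta_{n_k}}(J_\Phi) \;\leq\; C^t\,\ol{d}_{n_k}^{\,t}\,e^{-c n_k}.
\end{align*}
Since the Diameter Condition gives $\tfrac{1}{n}\log\ol{d}_n \to 0$, we may choose $k$ so large that $\ol{d}_{n_k}^{\,t} \leq e^{(c/2)n_k}$, whence $\mathcal{H}^t_{\delta_{n_k}}(J_\Phi) \leq C^t e^{-(c/2)n_k} \to 0$. Passing to the limit yields $\mathcal{H}^t(J_\Phi) = 0$, so $\HD(J_\Phi) \leq t$. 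Letting $t \downarrow B_\Phi$ completes the proof.

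The only subtle point, rather than a genuine obstacle, is confirming the strict inequality $\ul{P}(t) < 0$ for $t > B_\Phi$ (not merely $\leq 0$), which is why I invoke strict monotonicity through an intermediate value $s$; without this, the subexponential factor $\ol{d}_n^{\,t}$ could not be absorbed.
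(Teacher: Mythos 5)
Your proof is correct and follows essentially the same route as the paper's: cover $J_\Phi$ by the cylinder sets $\phi_\om(X_{t(\om)}^{(n)})$, bound their diameters via \eqref{MV like ineq}, and absorb the subexponential factor $\ol{d}_n^{\,t}$ into the exponential decay of $Z_n(t)$ to conclude $\mathcal{H}^t(J_\Phi)=0$ for $t>B_\Phi$. The only refinements you add are justifying the strict inequality $\ul{P}(t)<0$ via strict monotonicity of the pressure (which the paper asserts without comment) and phrasing the final limit explicitly along a subsequence $n_k$ realizing the $\liminf$, both harmless clarifications rather than a different argument.
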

\begin{proof}
Let $\scr{U}_n=\set{\phi_\om(X_{t(\om)}^{(n)}):\om\in I^n}$. Then $\sU_n$ is a cover of $J_\Phi$ such that $\diam(\scr{U}_n)\leq \eta^n$. 
Fix $t>B_\Phi$. Then $\ul{P}(t)<0$. Calculating the $t$-dimensional Hausdorff measure
we have 
\begin{align*}
	\sum_{\om\in I^n}\diam^t(\phi_\om(X_{t(\om)}^{(n)}))
	\leq \sum_{\om \in I^n}\left(C\cdot \norm{D\phi_\om}\cdot\diam(X_{t(\om)}^{(n)})\right)^t
	\leq (C\ol{d}_n)^tZ_n(t).
\end{align*}
We now let $\ep>0$ sufficiently small such that $(\ul{P}(t)+\ep(t+1))<0$, and recall that 
\begin{align*}
\limty{n}\frac{1}{n}\log\ol{d}_n=0.
\end{align*}
By taking $n$ sufficiently large, we see
\begin{align}\label{ineq: butter knife}
	H^t_\dl(J_\Phi)\leq \liminf_{n\to\infty}\sum_{\om\in I^n}\diam^t(Y_\om)\leq \liminfty{n}(C\ol{d}_n)^tZ_n(t)
	\leq\liminfty{n}e^{n(\ul{P}(t)+\ep(t+1))}=0.
\end{align}
Thus, letting $\dl\to 0$ we have  $H^t(J_\Phi)=0$. As this is true for each $t\geq B_\Phi$, we have that $\HD(J_\Phi)\leq B_\Phi$.
\end{proof}
Lemma \ref{lem:HDlessB} shows that half of Bowen's formula always holds. The next few sections will be devoted to proving the other half. As in the case of the theory of stationary NCIFS developed in \cite{rempe-gillen_non-autonomous_2016}, we cannot expect that the other half of Bowen's formula, i.e. $B_\Phi\leq\HD(J_\Phi)$, will hold in the generality of Lemma \ref{lem:HDlessB}.

\section{General Lower Bound}\label{Sec: Gen Low Bdd}
In this section we give a general lower bound for the Hausdorff dimension of a general NCGDMS. This rather technical theorem will be our main tool in establishing Bowen's formula for certain classes of non-autonomous systems in the sequel. 

Bowen's formula is based on the idea that the level sets $Y_n=\union_{\om\in I^n}\phi_\om(X_{t(\om)}^{(n)})$ are the optimal coverings of the limit set $J_\Phi$ in the sense of Hausdorff measure. Unlike the autonomous case, this is not necessarily true for the non-autonomous case without additional assumptions. In order to estimate a lower bound of the Hausdorff dimension of the limit set we first let
\begin{align*}
	\ul{c}_n&:=\min_{a\in I^{(n)}}\norm{D\phi_a^{(n)}} \spand \ol{c}_n:=\max_{a\in I^{(n)}}\norm{D\phi_a^{(n)}}.
\end{align*}
for each $n\in\NN$. Note that $0<\ul{c}_n\leq\ol{c}_n<1$ by the uniform contraction principle. The Open Set Condition and Uniform Cone Condition together give that the volume of the set $\union_{a\in I^{(n)}}\phi_{a}^{(n)}(X_{t(a)}^{(n)})$ is larger, up to a constant multiple, than
\begin{align*}
	C\#I^{(n)}\cdot\ul{c}_n\cdot\ul{d}_n.
\end{align*} 
In particular, we have
\begin{align*}
	\diam\left(\union_{a\in I^{(n)}}\phi_a^{(n)}(X_{t(a)}^{(n)})\right)&\geq \const\cdot\sqrt[d]{\#I^{(n)}}\cdot\ul{c}_{n}\cdot\ul{d_n}\\
&\geq \const\cdot(G_{n-1})^{\frac{1}{d}}\cdot\ul{c}_n\cdot \ul{d}_n.
\end{align*}
Thus for each $n\in\NN$ we let 
\begin{align*}
	\widetilde{Z}_{n+1}(t)&:=Z_{n}(t)\cdot\left(\ul{G}_{n}\right)^{\frac{t}{d}}\cdot\ul{c}_{n+1}^t\cdot (\ul{d}_{n+1})^t.
\end{align*}
Considering the definition of the Bowen dimension with $Z_n(t)$ replaced with $\widetilde{Z}_n(t)$, one might quite reasonably expect that $\HD(J_\Phi)\geq t$ whenever $\liminf\widetilde{Z}_n(t)>0$. The next theorem will show that this is indeed the case whenever the following constants are well behaved. Let
\begin{align*}
\rho_n&:=\sup_{a,b\in I^{(n)}}\frac{\norm{D\phi_a^{(n)}}}{\norm{D\phi_b^{(n)}}}=\frac{\ol{c}_n}{\ul{c}_n}\geq 1,
\end{align*}
and let 
\begin{align}\label{assumption 2}
\dl:=\limsup_{n\to\infty}\frac{1}{n}\log\ol{G}_n.
\end{align}
Finally, defining
\begin{align}\label{def: kappa(t)}
	\kp(t):=\liminf_{n\to\infty} \frac{1}{n}\log\frac{\widetilde{Z}_{n}(t)}{1+\log\max_{j\leq n+1}\set{\rho_j}+\sup_{k\geq 0}\log(\ol{d}_{n+k}/\ul{d}_n)},
\end{align}
we have now established all of the machinery necessary to prove our main technical theorem. Its proof relies on the use of the mass distribution principle, see, for example, \cite{falconer_fractal_1990}.
\begin{theorem}\label{thm:LBforHDJv}
	Suppose $\Phi$ is a $p$-finitely primitive finite NCGDMS. If $t>0$ such that 
	\begin{align*}
	\dl<\frac{\kp(t)}{p^2+p+1},
	\end{align*}
	we have $\HD(J_\Phi)>t$.
\end{theorem}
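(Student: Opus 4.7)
The plan is to invoke the mass distribution principle: I will construct a Borel probability measure $\mu$ supported on $J_\Phi$, and then exhibit $\ep > 0$ depending on the positive gap $\kp(t) - \dl(p^2+p+1)$, together with constants $C, r_0 > 0$, such that $\mu(B(x,r)) \leq C r^{t+\ep}$ for every $x \in J_\Phi$ and every $r \in (0, r_0)$. This will immediately yield $\HD(J_\Phi) \geq t + \ep > t$. By Proposition \ref{prop:1 fin prim}, I may replace $\Phi$ by a $1$-finitely primitive subsystem with the same limit set, at the cost of inflating the subexponential factors, which simplifies the construction of $\mu$: take $\mu$ to be the pushforward under $\pi$ of a weak-$*$ accumulation point (which exists by compactness of $J_\Phi$) of the sequence of probability measures $\mu_n$ on the code space defined by
\begin{align*}
\mu_n([\om]) = \frac{\norm{D\phi_\om}^t}{Z_n(t)}, \qquad \om \in I^n.
\end{align*}

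To estimate $\mu(B(x,r))$ for $x \in J_\Phi$ and small $r > 0$, I would choose the resolution level $n = n(r)$ characterized, via the Uniform Contraction and Diameter Conditions, by the requirement that $\diam(Y_{\om|_n}) \approx r$ for any $\om \in \pi^{-1}(x)$ (the level is essentially the same for all such $\om$ by bounded distortion). The Open Set Condition combined with the Uniform Cone Condition and the inclusion \eqref{cone containment} guarantees that each $Y_\tau$ with $\tau \in I^n$ contains a ball of radius comparable to $\norm{D\phi_\tau} \cdot \ul{d}_n$, and these balls are pairwise disjoint across $\tau$. A volumetric packing estimate therefore bounds the number of $\tau \in I^n$ meeting $B(x,r)$ by a constant multiple of $r^d/(\ul{c}_n \ul{d}_n)^d$, while each such cylinder carries mass at most $C \rho_n^t \cdot \ol{c}_n^t \cdot \norm{D\phi_{\tau|_{n-1}}}^t/Z_n(t)$. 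Assembling these ingredients against the definition of $\widetilde{Z}_n(t)$, I would extract an inequality of the form
\begin{align*}
\mu(B(x,r)) \leq C \cdot r^t \cdot \frac{\Xi_n}{\widetilde{Z}_{n+1}(t)},
\end{align*}
where $\Xi_n$ is a product of the subexponential correction factors $1 + \log \max_{j \leq n+1} \rho_j + \sup_{k \geq 0} \log(\ol{d}_{n+k}/\ul{d}_n)$ featured in the definition \eqref{def: kappa(t)} of $\kp(t)$.

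The hypothesis is then used to convert the above into a genuine $r^{t+\ep}$ bound. By definition of $\kp$, the ratio $\widetilde{Z}_n(t)/\Xi_n$ grows at least like $e^{n(\kp(t)-o(1))}$, so the right-hand side decays like $e^{-n\kp(t)/2}$ eventually. The factor $p^2 + p + 1$ enters from three separate accumulated losses each of order $\ol{G}_n^p \leq e^{p\dl n}$: one from the finite-primitivity bridging needed to pass from cylinders at adjacent levels when defining $\mu$; one from the mismatch between the stopping level of the geometric count and the level at which $\widetilde{Z}_n(t)$ is evaluated; and one from comparing $Z_n(t)$ with $Z_{n+p}(t)$ across a $p$-step extension. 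The hypothesis $\dl < \kp(t)/(p^2+p+1)$ is precisely what is needed to absorb all three losses and leave a net exponential gain $e^{-n\ep'}$, which then transfers, via $r \approx$ (diameter at level $n$), into an additional factor of $r^\ep$.

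The main obstacle I expect is the simultaneous bookkeeping of these three $\ol{G}_n^p$ losses, together with verifying that the weak-$*$ limit $\mu$ indeed assigns cylinder masses of the predicted order along a subsequence realizing the $\liminf$ in $\kp(t)$. Once that consistency is pinned down, the geometric counting step and the algebraic combination step become the standard mass distribution argument adapted to the non-autonomous setting, and the exponential gain provided by the gap $\kp(t) - \dl(p^2+p+1)$ closes the estimate.
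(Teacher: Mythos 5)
Your high-level strategy matches the paper's: both construct the natural measures $m_j(Y_\om)=\norm{D\phi_\om}^t/Z_j(t)$, take a weak-$*$ limit, and apply the mass distribution principle, with the hypothesis $\dl<\kp(t)/(p^2+p+1)$ used to absorb the subexponential correction factors into a net exponential gain. That part of the outline is sound.

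However, two steps that you gloss over are precisely the technical core, and as you have sketched them they would not close. First, your volumetric packing estimate treats all level-$n$ cylinders as having roughly the same size $\ul{c}_n\ul{d}_n$ and multiplies a count by a uniform mass bound. In the paper the words meeting $B(x,r)$ whose images have diameter $\geq r$ form a set $W$ of \emph{varying} lengths; one must show $\#W$ and each family $W_\ut(m)$ are uniformly bounded (via the cone condition), and then estimate the span of lengths $m$, which is exactly where the $1+\log\rho_{|\ut|+1}+\sup_k\log(\ol{d}_{\cdot}/\ul{d}_{\cdot})$ normalizing factor in the definition of $\kp(t)$ is produced. Your proposal conflates this with a single resolution level $n(r)$ and so never explains why those normalizers appear. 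Second, and more importantly, you never invoke H\"older's inequality. The $(\ul{G}_n)^{t/d}$ in $\widetilde{Z}_{n+1}(t)$ does not come from a raw count; it comes from writing, for the one-step extensions $C_\om$ of a fixed parent $\om\in W$,
\begin{align*}
\frac{\sum_{a\in C_\om}\norm{D\phi_a^{(n+1)}}^t}{\bigl(\sum_{a\in C_\om}\norm{D\phi_a^{(n+1)}}^d\bigr)^{t/d}}\leq(\#C_\om)^{1-t/d}\leq\bigl(\ol{G}_n\bigr)^{1-t/d},
\end{align*}
where the denominator is controlled by the disjointness and the radius-$r$ packing, and then combining with the lower bound $Z_{n+1}(t)\geq K^{-2t}\ul{c}_{n+1}^t\ul{G}_n Z_n(t)$. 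Without this interpolation step your ``each cylinder carries mass at most $C\rho_n^t\ol{c}_n^t\norm{D\phi_{\tau|_{n-1}}}^t/Z_n(t)$'' does not recombine into $r^t/\widetilde{Z}_{n+1}(t)$; the exponents simply do not line up.

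A smaller point: your attribution of the $p^2+p+1$ to ``three separate accumulated losses each of order $\ol{G}_n^p$'' is not accurate. In the paper, $p^2+p=p(p+1)$ comes from a single estimate $Z_{n+1,n+p}(t)\leq\const\cdot e^{\dl'p(p+1)n}$ (via Lemma~\ref{lem:FinPrimPlusGnSubexp->SubexpBdd}), and the remaining $+1$ comes from bounding $\xi_n^{1-t/d}\leq\const\cdot e^{\dl'(n+1)}$. Also, if you reduce to $1$-finite primitivity via Proposition~\ref{prop:1 fin prim} you must verify how $\dl$ and $\kp(t)$ transform under the recoding of $p$ steps into one; the paper deliberately avoids this bookkeeping by working with general $p$ throughout.

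Finally, note that the paper's argument, as written, delivers $m(B)\leq\const\cdot r^t$ and hence $\HD(J_\Phi)\geq t$; to upgrade to a strict inequality one applies the result to some $t'>t$ still satisfying the open hypothesis, as is done in Lemma~\ref{lem:LBbbs}. Your plan to build an extra factor $r^\ep$ directly is workable but is a genuinely additional step not needed once one observes the hypothesis is open in $t$.
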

\begin{proof}
	Choose $0<\dl',\kp'$ such that 
	\begin{align*}
	0\leq\dl<\dl'<\frac{\kp'}{p^2+p+1}<\frac{\kp(t)}{p^2+p+1}.
	\end{align*} 
	Let $m_j$ be a measure supported on $Y_j$ defined such that for $\om\in I^j$ 
	\begin{align*}
	m_j(Y_\om)=m_j\left(\phi_\om(X^{(j)}_{t(\om)})\right)=\frac{\norm{D\phi_\om}^t}{Z_{j}(t)}\cdot
	\end{align*}
	
	Let $n+p<j$, and let $\om\in I^n$. As we wish to estimate $m_j(Y_\om)$, we first note that finite primitivity allows us to write 
	\begin{align*}
	Z_{j}(t)&=\sum_{\om\in I^j}\norm{D\phi_\om}^t
	\geq\sum_{\mathclap{\substack{\beta\in I^n\\ \lm\in\Lambda_n\text{, }\al\in I^{n+p+1,j}\\ \beta\lm\al\in I^j}}}\norm{D\phi_{\beta\lm\al}}^t\geq K^{-4t}\sum_{\mathclap{\substack{\beta\in I^n\\ \lm\in\Lambda_n\text{, }\al\in I^{n+p+1,j}\\ \beta\lm\al\in I^j}}}\norm{D\phi_\beta}^t\cdot\norm{D\phi_\lm^{n+1,n+p}}^t\cdot\norm{D\phi_\al^{n+p+1,j}}^t\\
	&\geq K^{-4t}\sum_{\mathclap{\substack{\beta\in I^n\\ \al\in I^{n+p+1,j}\\ }}}\norm{D\phi_\beta}^t\cdot Q^{t}\cdot\norm{D\phi_\al^{n+p+1,j}}^t
	= K^{-4t}Q^{t}\sum_{\beta\in I^n}\norm{D\phi_\beta}^t\sum_{\al\in I^{n+p+1,j}}\norm{D\phi_\al^{n+p+1,j}}^t\\
	&=\const\cdot Z_{n}(t)\cdot Z_{n+p+1,j}(t).
	\end{align*}
	So in particular we have 
	\begin{align}\label{eqn:pq1lessthanfraction}
	Z_{n+p+1,j}(t)\leq\const\frac{Z_{j}(t)}{Z_{n}(t)}.
	\end{align}
	Now 
	\begin{align*}
	m_j(Y_\om)&=m_j\left(\union_{\gm\in L^{n+1,j}_\om}\phi_{\om\gm}(X^{(j)}_{t(\om\gm)})\right)\\
	&=\sum_{\gm\in L^{n+1,j}_\om}m_j\left(\phi_{\om\gm}(X^{(j)}_{t(\om\gm)})\right)
	\end{align*}
	since $\phi_{\om\gm}(X_{t(\om\gm)})\intersect\phi_{\om\gm'}(X_{t(\om\gm')})=\emptyset$ for each $\gm\neq\gm'\in L^{n+1,j}_\om$.
	
	Then, applying the Bounded Distortion Property we have
	\begin{align*}
	\sum_{\gm\in L^{n+1,j}_\om}m_j(Y_{\om\gm})
	&=\sum_{\gm\in L^{n+1,j}_\om}\frac{\norm{D\phi_{\om\gm}}^t}{Z_{j}(t)}
	\leq \norm{D\phi_\om}^t\cdot\sum_{\gm\in L^{n+1,j}_\om}\frac{\norm{D\phi^{n+1,j}_\gm}^t}{Z_{j}(t)}\\
	&\leq \frac{\norm{D\phi_\om}^t}{Z_{j}(t)}\cdot\sum_{\gm\in I^{n+1,j}}\norm{D\phi_\gm^{n+1,j}}^t
	=\frac{\norm{D\phi_\om}^t}{Z_{j}(t)}\cdot Z_{n+1,j}(t).
	\end{align*}
	Thus we have 
	\begin{equation}\label{eqn:ineq}
	\sum_{\gm\in L^{n+1,j}_\om}m_j(Y_{\om\gm})
	\leq\frac{\norm{D\phi_\om}^t}{Z_{j}(t)}\cdot Z_{n+1,j}(t).
	\end{equation}
	Invoking our prior observation and combining with equation \eqref{eqn:pq1lessthanfraction} we have 
	\begin{align*}
	m_j(Y_\om)
	&\leq\frac{\norm{D\phi_\om}^t}{Z_{j}(t)}\cdot Z_{n+1,j}(t)
	=\frac{\norm{D\phi_\om}^t}{Z_{j}(t)}\cdot\frac{Z_{n+1,j}(t)}{Z_{n+p+1,j}(t)}\cdot Z_{n+p+1,j}(t)\\
	&\leq \const\cdot \frac{\norm{D\phi_\om}^t}{Z_{j}(t)}\cdot\frac{Z_{n+1,j}(t)}{Z_{n+p+1,j}(t)}\cdot\frac{Z_{j}(t)}{Z_{n}(t)}\\
	&=\const\cdot \frac{\norm{D\phi_\om}^t}{Z_{n}(t)}\cdot\frac{Z_{n+1,j}(t)}{Z_{n+p+1,j}(t)}.
	\end{align*}
	Noticing that 
	\begin{align*}
	Z_{n+1,j}(t)
	&=\sum_{\beta\in I^{n+1,n+p}}\sum_{\gm\in L^{n+p+1,j}_\beta}\norm{D\phi_{\beta\gm}}^t
	\leq \sum_{\beta\in I^{n+1,n+p}}\sum_{\gm\in L^{n+p+1,j}_\beta}\norm{D\phi_{\beta}}^t\norm{D\phi_\gm}^t\\
	&\leq\left(\sum_{\beta\in I^{n+1,n+p}}\norm{D\phi_\beta}^t\right)\cdot\left(\sum_{\gm\in I^{n+p+1,j}}\norm{D\phi_\gm}^t\right)\\
	&=Z_{n+1,n+p}(t)\cdot Z_{n+p+1,j}(t).
	\end{align*}
	Then we are able to estimate the ratio 
	\begin{align*}
	\frac{Z_{n+1,j}(t)}{Z_{n+p+1,j}(t)}
	&\leq\frac{Z_{n+1,n+p}(t)\cdot Z_{n+p+1,j(t)}}{Z_{n+p+1,j}(t)}=Z_{n+1,n+p}(t).
	\end{align*}
	Recall that Lemma \ref{lem:FinPrimPlusGnSubexp->SubexpBdd} implies that if $\limsup_{n\to \infty}\frac{\log\ol{G}_n}{n}<\dl'<\infty$ then 
	\begin{align*}
	\limsup_{n\to \infty}\frac{\log\# I^{(n)}}{n}<p\cdot\dl'.
	\end{align*} 
	Since $\ol{c}_n<1$, for sufficiently large $n$ we have 
	\begin{align*}
	\sum_{a\in I^{(n)}}\norm{D\phi_a^{(n)}}^t\leq\#I^{(n)}\cdot\ol{c}_n^t\leq\#I^{(n)}\leq e^{p\cdot\dl' n}.
	\end{align*}
	Hence we see that 
	\begin{align*}
	Z_{n+1,n+p}(t)\leq e^{p\cdot\dl' n}\cdot e^{p\cdot\dl' (n+1)}\dots e^{p\cdot\dl' (n+p)}=e^{p\cdot\dl'(n+n+1+\cdots+n+p)}=e^{p\cdot\dl' \cdot n(p+1)}\cdot e^{\frac{1}{2}\dl'\cdot p^2(p+1)}.
	\end{align*}
	Since $p$ is fixed, and taking $\ep=\dl'\cdot p(p+1)$ we have 
	\begin{align*}
	Z_{n+1,n+p}(t)\leq \const\cdot e^{\ep n},
	\end{align*}
	and consequently we are left with
	\begin{align}\label{mj of Yom}
	m_j(Y_\om)\leq\const\cdot\frac{e^{\ep n}\norm{D\phi_\om}^t}{Z_{n}(t)}.
	\end{align}
	Now let $r>0$ and let $B\sub\RR^d$ be a ball of radius $r$. Let $W\sub I^*$ be the set of finite words such that each $\om\in W$ satisfies the following conditions:
	\begin{itemize}
		\item $B\intersect Y_\om\nonempty$,
		\item $\diam(Y_\om)\geq r$,
		\item $\diam(Y_{\om a})<r$ for some $a\in L^{(\absval{\om}+1)}_\om$.
	\end{itemize}
	Now let $W'\sub W$ be the set of all words in $W$ which are not extensions of some other word in $W$. Then for $\om\not=\gm\in W'$, the sets $Y_\om$ and $Y_\gm$ are disjoint and have diameters at least as great as $r$.	It follows from the Bounded Distortion Principle and Uniform Cone Condition that each set $Y_\om$ contains a cone based at a point of $B$ with diameter comparable to $r$. Indeed, from \eqref{cone containment}, we see that 
	\begin{align*}
		Y_\om\bus\Con\left(\phi_\om(x),D\phi_\om(x)u_x,\gm',C^{-1}\al'\diam(Y_\om)\right)\bus\Con\left(\phi_\om(x),D\phi_\om(x)u_x,\gm',C^{-1}\al'r\right)
	\end{align*}
	for each $\om\in W'$ and $x\in X_{t(\om)}^{(\absval{\om})}$. Considering the total volume of these cones, we see that there must be some constant $P>0$, independent of $r$, such that at most $P$ of the sets $Y_\om$, for $\om\in W'$ can be pairwise disjoint. In particular, we must have that $\#W'\leq P$ and for each $k$, the number of words in $W$ of length $k$ must also bounded by $P$. In other words, $\#(I^k\intersect W)\leq P$.
	
	Let $\ut\in W'$ and set $k=\absval{\ut}$. For each $m\geq 0$ define the set 
	\begin{align*}
	W_\ut(m)=\set{\om\in I^{k+m}\intersect W:\ut=\om|_k},
	\end{align*}
	where $\om\rvert_k:=\om_1\om_2\dots\om_k$. 	Let $\om\in W_\ut(m)$. Then 
	\begin{align}
	\diam(Y_{\om|_{k+1}})&\leq C\norm{D\phi_{\om|_{k+1}}}\cdot\diam(X^{(k+1)}_{t(\om_{k+1})})\nonumber\\
	&\leq C\norm{D\phi_{\om|_{k}}}\cdot\norm{D\phi_{\om_{k+1}}^{(k+1)}}\cdot\diam(X^{(k+1)}_{t(\om_{k+1})})\nonumber\\
	&\leq C^2\frac{\diam(Y_{\om|_k})}{\diam(X^{(k)}_{t(\om_k)})}\cdot \norm{D\phi^{(k+1)}_{\om_{k+1}}}\cdot\diam(X^{(k+1)}_{t(\om_{k+1})})\nonumber\\
	&=C^2\diam(Y_{\om|_k})\cdot\frac{\diam(X^{(k+1)}_{t(\om_{k+1})})}{\diam(X^{(k)}_{t(\om_k)})}\cdot\norm{D\phi_a^{(k+1)}}\cdot\frac{\norm{D\phi_{\om_{k+1}}^{(k+1)}}}{\norm{D\phi_a^{(k+1)}}}\label{insert into this}
	\end{align}
	for any $a\in L^{(k+1)}_{\om|_k}$. Estimating $\diam\left(\phi_{\om\rvert_{k}a}(X_{t(a)}^{(k)})\right)$ we see 
	\begin{align*}
	\diam(Y_{\om\rvert_k a})&\geq C^{-1}\norm{D\phi_{\om\rvert_k}}\diam\left(\phi_a^{(k+1)}(X_{t(a)}^{(k+1)})\right)\\
	&\geq C^{-2}\norm{D\phi_{\om\rvert_k}}\cdot\norm{D\phi_a^{(k+1)}}\diam(X_{t(a)}^{(k+1)})\\
	&\geq C^{-3}\frac{\diam(Y_{\om\rvert_k})}{\diam(X_{t(\om_k)}^{(k)})}\cdot\norm{D\phi_a^{(k+1)}}\cdot\diam(X_{t(a)}^{(k+1)}).
	\end{align*}
	Thus, solving for $\diam(Y_{\om_k})$, we see that
	\begin{align}\label{solving estimate}
	\diam(Y_{\om_k})\leq C^3\diam(Y_{\om\rvert_k a})\cdot\frac{\diam(X_{t(\om_k)}^{(k)})}{\diam(X_{t(a)}^{(k+1)})}\cdot\norm{D\phi_a^{(k+1)}}^{-1}.
	\end{align}
	Inserting \eqref{solving estimate} into \eqref{insert into this}, we have 
	\begin{align}
	\diam(Y_{\om|_{k+1}})&\leq C^2\diam(Y_{\om|_k})\cdot\norm{D\phi_a^{(k+1)}}\cdot\rho_{k+1}\cdot\frac{\diam\left(X^{(k+1)}_{t(\om_{k+1})}\right)}{\diam\left(X^{(k)}_{t(\om_k)}\right)}\nonumber\\
	&\leq C^5\diam(Y_{\om|_{k}a})\cdot\rho_{k+1}\cdot\frac{\diam\left(X^{(k+1)}_{t(\om_{k+1})}\right)}{\diam(X^{(k+1)}_{t(a)})}\nonumber\\
	&\leq C^5\cdot r\cdot \rho_{k+1}\cdot\frac{\diam\left(X^{(k+1)}_{t(\om_{k+1})}\right)}{\diam\left(X^{(k+1)}_{t(a)}\right)}.\label{diam Y om k+1}
	\end{align}
	Now estimating $\diam(Y_\om)$ we have 
	\begin{align*}
	\diam(Y_\om)&\leq C\norm{D\phi_{\om}}\cdot\diam\left(X^{(k+m)}_{t(\om)}\right)\\
	&\leq C\norm{D\phi_{\om|_{k+1}}}\cdot\norm{D\phi_{\om|_{k+2}^{k+m}}^{k+2,k+m}}\cdot\diam\left(X^{(k+m)}_{t(\om)}\right)\\
	&\leq C^2\frac{\diam(Y_{\om|_{k+1}})}{\diam(X^{(k+1)}_{t(\om_{k+1})})}\cdot \norm{D\phi_{\om|_{k+2}^{k+m}}^{k+2,k+m}}\cdot \diam(X^{(k+m)}_{t(\om)}).
	\end{align*}
	Inserting our estimate \eqref{diam Y om k+1} of $\diam(Y_{\om|_{k+1}})$, we now have 
	\begin{align*} 
	\diam(Y_\om)&\leq C^7\cdot r\cdot \rho_{k+1}\cdot\norm{D\phi_{\om|_{k+2}^{k+m}}^{k+2,k+m}}\cdot \frac{\diam\left(X^{(k+1)}_{t(\om_{k+1})}\right)}{\diam\left(X^{(k+1)}_{t(a)}\right)}\cdot\frac{\diam(X^{(k+m)}_{t(\om)})}{\diam(X^{(k+1)}_{t(\om_{k+1})})}\\
	&\leq C^7\cdot r\cdot \rho_{k+1}\cdot \eta^{m-1}\cdot\frac{\diam(X^{(k+m)}_{t(\om)})}{\diam(X^{(k+1)}_{t(a)})}\\
	&\leq C^7\cdot r\cdot \rho_{k+1}\cdot \eta^{m-1}\cdot\frac{\ol{d}_{k+m}}{\ul{d}_{k+1}}.
	\end{align*}
	Since for $\om\in W$ we have $\diam(Y_\om)\geq r$ we have that $r\leq rC^7\eta^{m-1}\rho_{k+1}\cdot\frac{\ol{d}_{k+m}}{\ul{d}_{k+1}}$ implies that
	\begin{align*}
	m&\leq\frac{\log\frac{C^7}{\eta}}{\log\eta^{-1}}\cdot\left[1+\log\rho_{k+1}+\log \frac{\ol{d}_{k+m}}{\ul{d}_{k+1}}\right]\\
	&= \const \left[1+\log\rho_{k+1}+\log\frac{\ol{d}_{k+m}}{\ul{d}_{k+1}} \right].
	\end{align*}
	Since $W_\ut(m)$ is a collection of words length $k+m$ of $W$, we must have that $\#W_\ut(m)\leq P$, hence letting 
	\begin{align*}
	W_\ut=\union_{m=0}^\infty W_\ut(m)=\union\set{W_\ut(m):0\leq m\leq \const\cdot\log\left[1+\rho_{\absval{\ut}+1}+\log\frac{\ol{d}_{\absval{\ut}+m}}{\ul{d}_{\absval{\ut}+1}}\right]},
	\end{align*}
	we see that 
	\begin{align*}
		\#W_\ut\leq\const\cdot\left[1+\log\rho_{\absval{\ut}+1}+\sup_{j\geq 1}\log\frac{\ol{d}_{\absval{\ut}+j}}{\ul{d}_{\absval{\ut}+1}}\right].
	\end{align*}
	
	Since $\#W$ is bounded, there is a constant $\Delta_B$ which depends on the ball $B$ with $\Delta_B=\max_{\om\in W}\absval{\om}<\infty$. Temporarily fixing $\om\in W$ with $\absval{\om}=n$, let 
	\begin{align*}
	C_\om=\set{a\in L_\om^{(n+1)}:Y_{\om a}\intersect B\nonempty \text{ and }\diam(Y_{\om a})\leq r}.
	\end{align*}
	Then 
	\begin{align*}
	B\intersect J\sub\union_{\om\in W}\union_{a\in C_\om}Y_{\om a}.
	\end{align*}
	Since the $Y_{\om a}$ have disjoint interiors for $\om\in W$ and $a\in C_\om$, and since $\union_{a\in C_\om}Y_{\om a}$ is contained in the disk of radius $2r$ with the same center as $B$, we have
	\begin{align*}
	\const\cdot r^d&\geq \sum_{a\in C_\om}\diam^d(Y_{\om a})\geq \sum_{a\in C_\om}C^{-d}\norm{D\phi_{\om a}}^d\diam^d(X_{t(a)}^{(n+1)})\\
	&\geq C^{-d}K^{-2d}\norm{D\phi_\om}^d\cdot(\ul{d}_{n+1})^d\cdot\sum_{a\in C_\om}\norm{D\phi_a^{(n+1)}}^d.
	\end{align*}
	From this, we get the estimate 
	\begin{align*}
	\norm{D\phi_\om}\leq\sqrt[d]{\frac{\const\cdot r^d}{(\ul{d}_{n+1})^d\sum_{a\in C_\om}\norm{D\phi_a^{(n+1)}}^d}}=\frac{\const\cdot r}{\ul{d}_{n+1}\sqrt[d]{\sum_{a\in C_\om}\norm{D\phi_a^{(n+1)}}^d}}.
	\end{align*}
	Now let $j\geq \Delta_B+p+1$. Using \eqref{mj of Yom} and \eqref{eqn:ineq} we can estimate the $j^{th}$ measure of $\union_{a\in C_\om}Y_{\om a}$ as
	\begin{align*}
	m_j\left(\union_{a\in C_\om}Y_{\om a}\right)&\leq \sum_{a\in C_\om}m_j(Y_{\om a})\leq \frac{\const\cdot e^{\ep (n+1)}}{Z_{n+1}(t)}\sum_{a\in C_\om}\norm{D\phi_{\om a}}^t\\
	&\leq \frac{\const\cdot e^{\ep (n+1)}\norm{D\phi_\om}^t}{Z_{n+1}(t)}\cdot\sum_{a\in C_\om}\norm{D\phi_a^{(n+1)}}^t\\
	&\leq \frac{\const\cdot e^{\ep (n+1)}}{Z_{n+1}(t)}\cdot\frac{\const\cdot r^t}{(\ul{d}_{n+1})^t\left(\sum_{a\in C_\om}\norm{D\phi_a^{(n+1)}}^d\right)^{\frac{t}{d}}}\cdot\sum_{a\in C_\om}\norm{D\phi_a^{(n+1)}}^t\\
	&=\frac{\const\cdot e^{\ep (n+1)}\cdot r^t}{(\ul{d}_{n+1})^t\cdot Z_{n+1}(t)}\cdot\frac{\sum_{a\in C_\om}\norm{D\phi_a^{(n+1)}}^t}{\left(\sum_{a\in C_\om}\norm{D\phi_a^{(n+1)}}^d\right)^{\frac{t}{d}}}.
	\end{align*}
	Setting $\vartheta:=\frac{d}{t}\geq 1$ and using H\"older's inequality we can estimate the fraction on the right to be
	\begin{align*}
	\frac{\sum_{a\in C_\om}\norm{D\phi_a^{(n+1)}}^t}{\left(\sum_{a\in C_\om}\norm{D\phi_a^{(n+1)}}^d\right)^{\frac{t}{d}}}&=\frac{\sum_{a\in C_\om}\norm{D\phi_a^{(n+1)}}^t}{\left(\sum_{a\in C_\om}\left(\norm{D\phi_a^{(n+1)}}^t\right)^\vartheta\right)^{\frac{1}{\vartheta}}}\\
	&\leq (\#C_\om)^{1-\frac{1}{\vartheta}}\leq \left(\#L_\om^{(n+1)}\right)^{1-\frac{t}{d}}\leq\left(\ol{G}_{n}\right)^{1-\frac{t}{d}}.
	\end{align*}
	Replacing our estimate into the inequality above we see that 
	\begin{align*}
	m_j\left(\union_{a\in C_\om}Y_{\om a}\right)
	&\leq\frac{\const\cdot r^t}{Z_{n+1}(t)}\cdot(\ul{d}_{n+1})^{-t}\cdot\left(\ol{G}_{n}\right)^{1-\frac{t}{d}}\cdot e^{\ep (n+1)}\\
	&=\frac{\const\cdot r^t}{Z_{n+1}(t)}\cdot (\ul{d}_{n+1})^{-t}\cdot\xi_{n}^{1-\frac{t}{d}}\cdot\left(\ul{G}_{n}\right)^{1-\frac{t}{d}}\cdot e^{\ep (n+1)}.
	\end{align*}
	Now, since $\xi_n\leq\ol{G_n}$, in light of our hypothesis \eqref{assumption 2}, we see that $\xi_{n}\leq e^{\dl' (n)}\leq e^{\dl' (n+1)}$ for sufficiently large $n$. This then gives us that $\xi_{n}^{1-\frac{t}{d}}\leq\const\cdot e^{\dl' (n+1)}$, and letting $\ep'=\dl'(p^2+p+1)$, we have 
	\begin{align}\label{eqn:mleqfrac}
	m_j\left(\union_{a\in C_\om}Y_{\om a}\right) 
	&\leq\frac{\const\cdot r^t}{Z_{n+1}(t)}\cdot(\ul{d}_{n+1})^{-t}\cdot\left(\ul{G}_{n}\right)^{1-\frac{t}{d}}\cdot\xi_{n}^{1-\frac{t}{d}}\cdot e^{\ep (n+1)}\notag\\
	&\leq \frac{\const\cdot r^t}{Z_{n+1}(t)}\cdot(\ul{d}_{n+1})^{-t}\cdot\left(\ul{G}_{n}\right)^{1-\frac{t}{d}}\cdot e^{(n+1)\cdot(\ep+\dl')}\notag\\
	&=\frac{\const\cdot r^t}{Z_{n+1}(t)}\cdot(\ul{d}_{n+1})^{-t}\cdot\left(\ul{G}_{n}\right)^{1-\frac{t}{d}}\cdot e^{\ep' (n+1)}.
	\end{align}

	Now we estimate $Z_{n+1}(t)$ 
	\begin{align}
	Z_{n+1}(t)&=\sum_{\gm\in I^{n+1}}\norm{D\phi_\gm}^t=\sum_{\beta\in I^n}\sum_{a\in L^{(n+1)}_\beta}\norm{D\phi_{\beta a}}^t\nonumber\\
	&\geq K^{-2t}\cdot\sum_{\beta\in I^n}\sum_{a\in L^{(n+1)}_\beta}\norm{D\phi_{\beta}}^t\norm{D\phi_a}^t
	\geq K^{-2t}\cdot\sum_{\beta\in I^n}\sum_{a\in L^{(n+1)}_\beta}\norm{D\phi_{\beta}}^t\cdot \ul{c}_{n+1}^t\nonumber\\
	&=K^{-2t}\cdot\ul{c}_{n+1}^t\sum_{\beta\in I^n}\sum_{a\in L^{(n+1)}_\beta}\norm{D\phi_{\beta}}^t
	=K^{-2t}\cdot\ul{c}_{n+1}^t\cdot \sum_{\beta\in I^n}\norm{D\phi_\beta}^t\cdot\#L^{(n+1)}_\beta\nonumber\\
	&\geq K^{-2t}\cdot\ul{c}_{n+1}^t\cdot\ul{G}_{n}\cdot\sum_{\beta\in I^n}\norm{D\phi_\beta}^t\nonumber\\
	&=K^{-2t}\cdot\ul{c}_{n+1}^t\cdot\ul{G}_{n}\cdot Z_n(t).\label{ineq *1 pg 20}
	\end{align}
	Thus replacing into our estimate of the measure of $\union_{a\in C_\om}Y_{\om a}$ we have 
	\begin{align*}
	m_j\left(\union_{a\in C_\om}Y_{\om a}\right)&\leq \frac{\const\cdot r^t\cdot e^{\ep' (n+1)}\left(\ul{G}_{n}\right)^{1-\frac{t}{d}}(\ul{d}_{n+1})^{-t}}{Z_{n+1}(t)}\\
	&\leq\frac{\const\cdot r^t\cdot e^{\ep' (n+1)}\cdot \left(\ul{G}_{n}\right)^{1-\frac{t}{d}}(\ul{d}_{n+1})^{-t} }{K^{-2t}\cdot Z_{n}(t)\cdot\ul{c}_{n+1}^t\cdot \ul{G}_{n}}\\
	&=\frac{\const\cdot r^t\cdot e^{\ep' (n+1)}}{Z_{n}(t)\cdot\ul{c}_{n+1}^t\cdot \left(\ul{G}_{n}\right)^{\frac{t}{d}}(\ul{d}_{n+1})^{t}}
	=\frac{\const\cdot e^{\ep' (n+1)}\cdot r^t}{\widetilde{Z}_{n+1}(t)}.
	\end{align*}	
	Since $\kp'<\kp(t)=\liminf_{n\to\infty}\frac{1}{n}\log\frac{\widetilde{Z}_n(t)}{1+\log\max_{j\leq n}\rho_j+\sup_{k\geq 1}\log\frac{\ol{d}_{n+k}}{\ul{d}_{n+1}} }$, then for sufficiently large $n$, we have 
	\begin{align*}
	e^{\kp'n}\leq\frac{\widetilde{Z}_n(t)}{1+\log\max_{j\leq n}\rho_j+\sup_{k\geq 1}\log\frac{\ol{d}_{n+k}}{\ul{d}_{n+1}}}.
	\end{align*}
	Now letting $\ut\in W'$ and $\om\in W_\ut$ and recalling that $\ep'=\dl'(p^2+p+1)<\kp'$ we have
	\begin{align*}
	m_j\left(\union_{a\in C_\om}Y_{\om a}\right)
	&\leq \frac{\const\cdot e^{\ep' (n+1)}}{\widetilde{Z}_{n+1}(t)}\cdot r^t\\
	&\leq \frac{\const\cdot e^{\ep' (n+1)}}{ e^{\kp'(n+1)}\cdot\left(1+\log\max_{j\leq n+1}\rho_j+\sup_{k\geq 1}\log\frac{\ol{d}_{n+k}}{\ul{d}_{n+1}}\right)}\cdot r^t\\
	&\leq\frac{\const}{1+\log\rho_{\absval{\ut}+1}+\sup_{k\geq 1}\log\frac{\ol{d}_{n+k}}{\ul{d}_{n+1}}}\cdot e^{(\ep'-\kp')(n+1)}\cdot r^t\\
	&\leq\frac{\const}{1+\log\rho_{\absval{\ut}+1}+\sup_{k\geq 1}\log\frac{\ol{d}_{n+k}}{\ul{d}_{n+1}}}\cdot r^t.
	\end{align*}
	Then for $r$ sufficiently small and $n=\absval{\om}$ sufficiently large and noting that the cardinalities of $W$, $W'$, and $W_\ut$ are all bounded, we see that 
	\begin{align*}
	m_j(B)&\leq m_j\left(\union_{\ut\in W'}\union_{\om\in W_\ut}\union_{a\in C_\om}Y_{\om a}\right)\leq \sum_{\ut\in W'}\sum_{\om\in W_\ut}m_j\left(\union_{a\in C_\om}Y_{\om a}\right)\\
	&\leq\sum_{\ut\in W'}r^t\cdot\sum_{\om\in W_\ut}\frac{\const}{1+\log\rho_{\absval{\ut}+1}+\sup_{k\geq 1}\log\frac{\ol{d}_{n+k}}{\ul{d}_{n+1}}}\\
	&\leq \const\cdot r^t\sum_{\ut\in W'}\frac{\#W_\ut}{1+\log\rho_{\absval{\ut}+1}+\sup_{k\geq 1}\log\frac{\ol{d}_{n+k}}{\ul{d}_{n+1}}}\\
	&\leq\const\cdot r^t.
	\end{align*}
	Taking $m$ to be an arbitrary weak limit of the sequence $(m_j)_{j=1}^\infty$ we have that 
	\begin{align*}
	m(B)\leq \const\cdot r^t,
	\end{align*}
	and therefore applying the mass distribution principle, it follows that $\HD(J_\Phi)\geq t$.
\end{proof}
To conclude this section we would like to present a simpler lower bound for a system for which Bowen's formula holds.
\begin{remark}
 Note that the formula presented in \eqref{eqn:pq1lessthanfraction} more generally gives us that 
	\begin{align*}
		Z_n(t)\geq K^{-nt}Z_{(1)}(t)\cdots Z_{(n)}(t).
	\end{align*}
	So in particular, given a NCGDMS $\Phi$ for which Bowen's formula holds, if there is some $M>1$ such that
	\begin{align*}
		Z_{(n)}(t)\geq MK^t
	\end{align*}
	for all $n\in\NN$ then $\ul{P}(t)>0$ and thus $\HD(J_\Phi)\geq t$. 	
\end{remark}
Even though this lower bound is less sophisticated than the one provided by Theorem \ref{thm:LBforHDJv}, we will be able to make good use of it later.

\section{Balancing Conditions}\label{sec: balancing}
We now present the conditions imposed upon the derivatives, which along with Theorem \ref{thm:LBforHDJv}, will ensure that Bowen's formula holds.
\begin{definition}
	A NCGDMS is called \textit{perfectly balanced} if $\rho_n=1$ for all $n\in\NN$ and furthermore $\phi^{(n)}_a$ are affine similarities. The system is called \textit{balanced} if there is a constant $C\geq 1$ such that $\rho_n\leq C$ for all $n\in\NN$ and \textit{weakly balanced} if 
	\begin{align*}
	\limit{n}{\infty}\frac{1}{n}\log\rho_n=0.
	\end{align*}
	The system is called \textit{barely balanced} if 
	\begin{align*}
	\limit{n}{\infty}\frac{\log(1+\log\rho_n)}{n}=0.
	\end{align*}
\end{definition}
Note that for the above definitions, we have the following chain of implications
\begin{align*}
\text{perfectly balanced }\Rightarrow \text{balanced }\Rightarrow \text{weakly balanced }\Rightarrow \text{barely balanced}.
\end{align*}
In the following lemma we will see that for barely balanced systems, the lower bound given in Theorem \ref{thm:LBforHDJv} depends only on the quantity
\begin{align*}
		\widetilde{Z}_{n}(t)&=Z_{n-1}(t)\cdot\left(\ul{G}_{n-1}\right)^{\frac{t}{d}}\cdot\ul{c}_{n}^t\cdot\ul{d}_n^t.
\end{align*}
\begin{lemma}\label{lem:LBbbs}
	Let $\Phi$ be a finitely primitive NCGDMS.
	\begin{enumerate}
		\item  The function
		\begin{align*}
		\widetilde{P}(t):=\liminf_{n\to\infty}\frac{1}{n}\log\widetilde{Z}_{n}(t)
		\end{align*}
		is strictly decreasing when it is finite and in fact 
		\begin{align*}
		\widetilde{P}(t')\geq \widetilde{P}(t)+(t-t')\log\frac{1}{\eta}
		\end{align*}
		when $t'<t$ and $\widetilde{P}(t)<\infty$. 
		\item Suppose that $\Phi$ is barely balanced, and that $t\geq 0$ such that
		\begin{align*}
		\widetilde{P}(t)\geq 0		
		\end{align*}	
		Then $\HD(J_\Phi)\geq t$.
	\end{enumerate}
	
\end{lemma}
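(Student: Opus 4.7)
For part (1), I would compute the ratio $\widetilde{Z}_n(t')/\widetilde{Z}_n(t)$ for $t' < t$ directly from the definition:
\[
\frac{\widetilde{Z}_n(t')}{\widetilde{Z}_n(t)} = \frac{Z_{n-1}(t')}{Z_{n-1}(t)} \cdot (\ul{G}_{n-1})^{(t'-t)/d} \cdot \ul{c}_n^{t'-t} \cdot \ul{d}_n^{t'-t}.
\]
The Uniform Contraction Principle $\norm{D\phi_\om} \leq \eta^{|\om|}$, applied term by term inside $Z_{n-1}$, gives $Z_{n-1}(t')/Z_{n-1}(t) \geq (1/\eta)^{(n-1)(t-t')}$, and $\ul{c}_n \leq \eta$ similarly yields $\ul{c}_n^{t'-t} \geq (1/\eta)^{t-t'}$. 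The two residual factors $(\ul{G}_{n-1})^{(t'-t)/d}$ and $\ul{d}_n^{t'-t}$ should contribute only subexponentially: for $\ul{d}_n$ this is immediate from the Diameter Condition together with \eqref{dn low sub exp bdd}, while for $\ul{G}_{n-1}$ one inverts the iterated lower bound \eqref{ineq *1 pg 20} using the hypothesis $\widetilde{P}(t) < \infty$ to conclude $\tfrac{1}{n}\log\ul{G}_n \to 0$. Taking $\tfrac{1}{n}\log$ and passing to $\liminf$ then delivers $\widetilde{P}(t') \geq \widetilde{P}(t) + (t-t')\log(1/\eta)$, which is strictly greater than $\widetilde{P}(t)$ whenever $t'<t$ since $\eta \in (0,1)$.

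For part (2), the key structural reduction is that barely balancedness collapses $\kp$ to $\widetilde{P}$. Indeed, the denominator $1 + \log\max_{j\leq n+1}\rho_j + \sup_{k\geq 0}\log(\ol{d}_{n+k}/\ul{d}_n)$ appearing inside the $\log$ in \eqref{def: kappa(t)} is $e^{o(n)}$: barely balancedness yields $\log\rho_j = e^{o(j)}$, and the Diameter Condition controls the diameter term analogously, so its $\log$ contributes $o(n)$ and $\kp(t) = \widetilde{P}(t)$. Now fix any $t' \in [0,t)$; by part (1) and the hypothesis $\widetilde{P}(t)\geq 0$,
\[
\kp(t') \;=\; \widetilde{P}(t') \;\geq\; (t-t')\log(1/\eta) \;>\; 0.
\]
Theorem \ref{thm:LBforHDJv} then yields $\HD(J_\Phi) > t'$ whenever $\dl < \kp(t')/(p^2+p+1)$, and letting $t' \nearrow t$ within the range where this strict inequality holds gives $\HD(J_\Phi) \geq t$.

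The main obstacle is handling the wrong-direction factor $(\ul{G}_{n-1})^{(t'-t)/d}$ in part (1): because $t'-t<0$ and $\ul{G}_{n-1}\geq 1$, this factor is at most $1$, and a matching lower bound requires genuine subexponential control of $\ul{G}_n$. Extracting such control purely from $\widetilde{P}(t)<\infty$—by inverting the iterated chain $Z_n(t) \geq \const \cdot \prod_{i\leq n} \ul{c}_i^t \ul{G}_{i-1}$ obtained from \eqref{ineq *1 pg 20}, and absorbing the Uniform Contraction bounds on $\ul{c}_n$ and the Diameter Condition bounds on $\ul{d}_n$—is the most delicate bookkeeping in the argument, and it is where I expect the proof to spend most of its effort.
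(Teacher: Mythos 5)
Your part (1) stands or falls on the claim that $\widetilde{P}(t)<\infty$ forces $\frac{1}{n}\log\ul{G}_n\to 0$, and that claim is false, so the proposed route has a genuine gap. Inverting \eqref{ineq *1 pg 20} only controls the products $\ul{c}_{n+1}^t\,\ul{G}_n$, not $\ul{G}_n$ alone: the contractions may be far stronger than $\eta$, so the factors $\ul{c}_n$ can decay fast enough to keep $Z_n(t)$ and $\widetilde{Z}_n(t)$ bounded while $\ul{G}_n$ grows exponentially. Moreover, the lemma is applied later (Corollaries \ref{cor:BFforExpGrowth} and \ref{cor:BFforWBfrac}) precisely to systems in which $\ul{G}_n$ grows exponentially, so any proof that needs subexponentiality of $\ul{G}_n$ proves a strictly weaker statement. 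The paper's proof never separates the three awkward factors; it bounds their combination. By Bounded Distortion each image $\phi_a^{(n)}(X^{(n)}_{t(a)})$ contains a ball of radius comparable to $\ul{c}_n$, and the Open Set Condition packs the $\#I^{(n)}$ disjoint images into $\bigcup_{v\in V_{n-1}}X_v^{(n-1)}$, which yields $\#I^{(n)}\ul{c}_n^d\leq L\,\#V_{n-1}\ol{d}_{n-1}^d$ and hence $\ul{G}_{n-1}\ul{c}_n^d\ul{d}_n^d\leq C_n:=L\,\#V_{n-1}\ol{d}_{n-1}^d\ul{d}_n^d$, with $C_n$ subexponential by the Diameter Condition and the assumption on $\#V_n$ (this is \eqref{eqn:CunderGoestoZero}). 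Then $\left(\ul{G}_{n-1}^{1/d}\ul{c}_n\ul{d}_n\right)^{t'-t}\geq C_n^{-(t-t')/d}=e^{-o(n)}$, which is all part (1) needs. In other words, large $\ul{G}_n$ is compensated by correspondingly small $\ul{c}_n$, and only the product is controlled; your separate treatment of $\ul{c}_n$ via $\ul{c}_n\leq\eta$ throws exactly that compensation away.

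In part (2), your observation that bare balancedness and the Diameter Condition give $\kp(t')=\widetilde{P}(t')$ is fine, but your conclusion is conditional in a way the statement is not: Theorem \ref{thm:LBforHDJv} requires $\dl<\kp(t')/(p^2+p+1)$, where $\dl=\limsup_{n\to\infty}\frac{1}{n}\log\ol{G}_n$ is a fixed constant of the system, while $\kp(t')=\widetilde{P}(t')$ may be finite and shrink toward $\widetilde{P}(t)\geq 0$ as $t'\nearrow t$. So "letting $t'\nearrow t$ within the range where this strict inequality holds" delivers $\HD(J_\Phi)\geq t$ only if that range actually reaches $t$, e.g.\ when $\dl=0$; the lemma carries no hypothesis on $\dl$. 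The paper's own proof closes this by asserting that under bare balancedness $\kp(t')=\infty$ and then applying Theorem \ref{thm:LBforHDJv}; to match the stated result you would need either such an argument or an explicit hypothesis bounding $\dl$ relative to $\kp(t')$.
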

\begin{proof}
	Noting that for each $a\in I^{(n)}$, $x\in X_{t(a)}^{(n)}$, and $r>0$ such that $B(x,r)\sub X_{t(a)}^{(n)}$ the Bounded Distortion Property gives us that 
	\begin{align*}
	X_{i(a)}^{(n-1)}\bus\phi_a^{(n)}(X_{t(a)}^{(n)})\bus\phi_a^{(n)}\left(B(x,r)\right)\bus B\left(\phi_a^{(n)}(x),K^{-1}\cdot r\cdot\norm{D\phi_a^{(n)}}\right).
	\end{align*}
	This implies that $\phi_a^{(n)}(X_{t(a)}^{(n)})$ must contain a ball of radius comparable to $\ul{c}_n$, and so the Open Set Condition must give us that there is some constant $L>0$, depending only on the dimension $d$, such that 
	\begin{align*}
		 \#I^{(n)}\ul{c}_n^d\leq L\cdot\#V_{n-1}\ol{d}_{n-1}^d.
	\end{align*}
	For each $n\in\NN$ define $C_n$ to be the number
	\begin{align*}
		C_n:=L\cdot\#V_{n-1}\cdot\ol{d}_{n-1}^d\cdot\ul{d}_n^d.
	\end{align*}	
	The Diameter Condition ensures that $C_n$ is subexponentially bounded. In particular, in view of Observation \ref{obs:AlphaIneq1}, we have that
	\begin{align}\label{eqn:CunderGoestoZero}
	\ul{G}_{n-1}\cdot \ul{c}_{n}^d\cdot\ul{d}_n^d\leq C_n.
	\end{align}
	Then for $t'<t$ and sufficiently large $n$
	\begin{align*}
	\frac{\widetilde{Z}_{n+1}(t)}{\widetilde{Z}_{n+1}(t')}&=\frac{Z_{n}(t)}{Z_{n}(t')}\cdot\left(\left(\ul{G}_{n}\right)^\frac{1}{d}\cdot\ul{c}_{n+1}\cdot\ul{d}_{n+1}\right)^{t-t'}
	\leq \frac{Z_{n}(t)}{Z_{n}(t')}\cdot C_n^{\frac{t-t'}{d}}\leq C_n^{\frac{t-t'}{d}}\cdot\eta^{n(t-t')}.
	\end{align*}
	Hence we arrive at the estimate
	\begin{align*}
	\widetilde{P}(t')=\liminf_{n\to\infty}\frac{1}{n+1}\log\widetilde{Z}_{n+1}(t')&\geq(t-t')\log\frac{1}{\eta}+\liminf_{n\to\infty}\left(\frac{\log Z_{n}(t)}{n}-\frac{(t-t')\log C_n}{nd}\right)\\
	&=(t-t')\cdot\log\frac{1}{\eta}+\widetilde{P}(t),
	\end{align*}
	which proves the first claim. 
	
	To prove the second claim, we begin by supposing that $\Phi$ is a barely balanced NCGDMS with $0\leq t'<t$ such that  $\widetilde{P}(t)\geq 0$. Then we have that $\widetilde{P}(t')> 0$ and thus $\widetilde{Z}_{n}(t')$ must grow at least exponentially in $n$. However, as $\Phi$ is barely balanced, the quantity 
	\begin{align*}
	1+\log\ds\max_{j\leq n}\set{\rho_j}+\sup_{k\geq 1}\log\frac{\ol{d}_{n+k}}{\ul{d}_{n+1}}
	\end{align*} 
	grows subexponentially due to the Diameter Condition. Hence $\kp(t')=\infty$. Applying Theorem \ref{thm:LBforHDJv}, we have that $\HD(J_\Phi)\geq t'$. As $t'<t$ was arbitrary we have that $\HD(J_\Phi)\geq t$. 	
\end{proof}

\begin{corollary}\label{cor:BFwbs}
	Let $\Phi$ be a weakly balanced and finitely primitive NCGDMS such that 
	\begin{align*}
	\limit{n}{\infty}\frac{1}{n}\log\ol{G}_n=0.
	\end{align*}
	Then $\HD(J_\Phi)=B_\Phi$.
\end{corollary}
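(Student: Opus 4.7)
The inequality $\HD(J_\Phi)\leq B_\Phi$ is already supplied by Lemma~\ref{lem:HDlessB}, so only the reverse containment is at issue. The plan is to apply Lemma~\ref{lem:LBbbs}(2) for every $t<B_\Phi$ and then let $t\nearrow B_\Phi$. This is legitimate because weak balance forces bare balance: $\frac{1}{n}\log\rho_n\to 0$ means $\log\rho_n=o(n)$, hence $\frac{1}{n}\log(1+\log\rho_n)\to 0$. Moreover, for any $t<B_\Phi$ the lower pressure satisfies $\ul{P}(t)>0$, either because $t<\theta_\Phi$ forces $\ul{P}(t)=\infty$, or because $t\geq\theta_\Phi$ and the strict monotonicity of $\ul{P}$ on $[\theta_\Phi,d]$ yields $\ul{P}(t)>\ul{P}(s)\geq 0$ for some $s\in(t,B_\Phi]$. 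It therefore suffices to establish the universal comparison $\widetilde{P}(t)\geq \ul{P}(t)$.

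To prove this comparison I will derive a one-step lower bound on $\widetilde{Z}_{n+1}(t)$ in terms of $Z_{n+1}(t)$ by running a partition function comparison ``backwards.'' Splitting any $\gm\in I^{n+1}$ as $\gm=\beta a$ with $\beta\in I^n$ and $a\in L_\beta^{(n+1)}$, the Bounded Distortion Property gives $\norm{D\phi_{\beta a}}\leq \ol{c}_{n+1}\norm{D\phi_\beta}$, and the trivial count $\#L_\beta^{(n+1)}\leq \ol{G}_n$ yields the clean upper estimate $Z_{n+1}(t)\leq \ol{G}_n\,\ol{c}_{n+1}^{\,t}\,Z_n(t)$. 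Inverting this and substituting into $\widetilde{Z}_{n+1}(t)=Z_n(t)\,\ul{G}_n^{\,t/d}\,\ul{c}_{n+1}^{\,t}\,\ul{d}_{n+1}^{\,t}$ produces
\begin{equation*}
\widetilde{Z}_{n+1}(t)\;\geq\;\frac{\ul{G}_n^{\,t/d}}{\ol{G}_n}\cdot \rho_{n+1}^{-t}\cdot \ul{d}_{n+1}^{\,t}\cdot Z_{n+1}(t),
\end{equation*}
where the ratio $\rho_{n+1}^{-t}$ arises precisely from the mismatch between the $\ul{c}_{n+1}$ appearing inside $\widetilde{Z}_{n+1}$ and the $\ol{c}_{n+1}$ produced by the BDP upper bound.

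The last step is pure bookkeeping: the hypothesis $\frac{1}{n}\log\ol{G}_n\to 0$ (combined with $1\leq\ul{G}_n\leq\ol{G}_n$) controls the $G$-factors, weak balance controls $\rho_{n+1}^{-t}$, and \eqref{dn low sub exp bdd} controls $\ul{d}_{n+1}^{\,t}$. Taking logarithms, dividing by $n+1$, and passing to the liminf then yields $\widetilde{P}(t)\geq \ul{P}(t)$, completing the plan. There is no serious obstacle; the argument is really an exercise in locating the one inequality that isolates $\rho_{n+1}$ in the correct place. The conceptual point is that weak balance is exactly the assumption needed to absorb the derivative ratio that appears when the partition function comparison is run backwards via the BDP, and that subexponential control of $\ol{G}_n$ and $\ul{d}_n$ then suffices to render all remaining combinatorial and geometric prefactors negligible on the exponential scale.
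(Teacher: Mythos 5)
Your proposal is correct and follows essentially the same route as the paper: the single estimate $Z_{n+1}(t)\leq \ol{G}_n\,\ol{c}_{n+1}^{\,t}\,Z_n(t)$, rearranged to show $\widetilde{Z}_{n+1}(t)\geq Z_{n+1}(t)\cdot\ul{G}_n^{\,t/d}\,\ol{G}_n^{-1}\rho_{n+1}^{-t}\ul{d}_{n+1}^{\,t}$ with subexponential prefactors, giving $\widetilde{P}(t)\geq\ul{P}(t)>0$ and then Lemma \ref{lem:LBbbs}(2) for each $t<B_\Phi$, is exactly the paper's argument (written there with $\xi_n=\ol{G}_n/\ul{G}_n$). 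The only cosmetic remark is that the one-step bound uses mere submultiplicativity of $\norm{D\phi_{\beta a}}$ from the chain rule, not the Bounded Distortion Property.
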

\begin{proof}
	Notice that $\lim_{n\to\infty}\frac{1}{n}\log\ol{G}_n=0$ implies that 
	\begin{align*}
		\lim_{n\to\infty}\frac{1}{n}\log\ul{G}_n=0 \spand \lim_{n\to\infty}\frac{1}{n}\log\xi_n=0.
	\end{align*}
	Now let $t<B_\Phi$. Then $\ul{P}(t)>0$ and thus $Z_{n}(t)$ grows exponentially in $n$. Hence 
	\begin{align}
	Z_{n+1}(t)&=\sum_{\beta\in I^n}\sum_{a\in L^{(n+1)}_\beta}\norm{D\phi_{\beta a}}^t\leq \sum_{\beta\in I^n}\norm{D\phi_\beta}^t\sum_{a\in L^{(n+1)}_\beta}\norm{D\phi_a^{(n+1)}}^t\nonumber\\
	&\leq Z_{n}(t)\cdot \ol{G}_{n}\cdot\ol{c}_{n+1}^t\nonumber\\
	&= Z_{n}(t)\cdot \ul{G}_{n}\cdot\xi_{n}\cdot\rho_{n+1}^t\cdot\ul{c}_{n+1}^t\nonumber\\
	&=\widetilde{Z}_{n+1}(t)\cdot \left(\ul{G}_{n}\right)^{1-\frac{t}{d}}\cdot\xi_{n}\cdot\rho_{n+1}^t\cdot\ul{d}_{n+1}^{-t}.\label{eqn from cor 7.3}
	\end{align}
	Since the system is weakly balanced, the quantity  $\left(\ul{G}_{n}\right)^{1-\frac{t}{d}}\cdot\xi_{n}\cdot\rho_{n+1}^t\cdot\ul{d}_{n+1}^{-t}$ grows at most subexponentially in $n$, and so 
	\begin{align*}
	\widetilde{P}(t)=\liminf_{n\to\infty}\frac{1}{n}\log \widetilde{Z}_{n}(t)\geq\liminf_{n\to\infty}\frac{1}{n}\log Z_{n}(t)=\ul{P}(t)>0.
	\end{align*}
	Thus applying Lemma \ref{lem:LBbbs} implies that $\HD(J_\Phi)\geq t$. As $t< B_\Phi$ was chosen arbitrarily we have that $\HD(J_\Phi)\geq B_\Phi$, which in light of Lemma \ref{lem:HDlessB}, gives the desired result.
\end{proof}
Together with Corollary \ref{cor: fin prim + G implies subexp}, Corollary \ref{cor:BFwbs} completes the proof of Theorem \ref{MainThm}. Now we consider systems whose alphabets grow exponentially in size. 

\begin{corollary}\label{cor:BFforExpGrowth} Suppose that $\Phi$ is a finitely primitive NCGDMS such that 
	\begin{align*}
	\limsup_{n\to\infty}\frac{1}{n}\log\ol{G}_n<\infty \quad\text{ and }\quad \limsup_{n\to\infty}\frac{1}{n}\log\rho_n<\infty.
	\end{align*}
	If $\limit{n}{\infty}\ol{c}_n=0$, then $\HD(J_\Phi)=B_\Phi$.
\end{corollary}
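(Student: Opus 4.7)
My plan is to establish the lower bound $\HD(J_\Phi)\geq B_\Phi$ (the upper bound is immediate from Lemma~\ref{lem:HDlessB}) by applying Theorem~\ref{thm:LBforHDJv} to each $t<B_\Phi$, so that by strict monotonicity of $\ul{P}$ one has $\ul{P}(t)>0$ and $Z_n(t)$ grows exponentially. The hypothesis $\dl=\limsup_n\frac{1}{n}\log\ol{G}_n<\infty$ is given, so the task reduces to lower-bounding $\kp(t)$ sufficiently.

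A preliminary step shows that $\kp(t)$ equals $\widetilde{P}(t):=\liminf_n\frac{1}{n}\log\widetilde{Z}_n(t)$. Indeed, the denominator
\begin{align*}
	1+\log\max_{j\leq n+1}\rho_j+\sup_{k\geq 0}\log(\ol{d}_{n+k}/\ul{d}_n)
\end{align*}
in the definition of $\kp(t)$ is $O(n)$ under the hypothesis $\limsup_n\frac{1}{n}\log\rho_n<\infty$, together with the Diameter Condition (and the standing assumption that the diameters remain bounded so that the supremum is finite). Therefore $\frac{1}{n}\log$ of this denominator tends to $0$, and hence $\kp(t)=\widetilde{P}(t)$.

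The crucial step is to exploit $\ol{c}_n\to 0$ to show that $\widetilde{P}(t)>(p^2+p+1)\dl$. The plan is to unpack $\widetilde{Z}_n(t)=Z_{n-1}(t)\ul{G}_{n-1}^{t/d}\ul{c}_n^t\ul{d}_n^t$ and combine three ingredients: the sharper lower bound from \eqref{ineq *1 pg 20}, namely $Z_{n+1}(t)\geq K^{-2t}\ul{c}_{n+1}^t\ul{G}_nZ_n(t)$; the upper bound $Z_{n+1}(t)\leq \ol{c}_{n+1}^t\ol{G}_nZ_n(t)$ obtained from bounded distortion; and the volume-packing estimate $\ul{G}_{n-1}\ul{c}_n^d\ul{d}_n^d\leq C_n$ with $C_n$ subexponential, as in the proof of Lemma~\ref{lem:LBbbs}. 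Since $Z_n(t)$ grows exponentially while $\ol{c}_n\to 0$, the upper bound forces $\ol{G}_{n-1}$ to grow rapidly enough to absorb the vanishing of $\ol{c}_{n+1}^t$; combined with the control on $\rho_n$, this pushes $\ul{G}_{n-1}^{t/d}$ to compensate for the decay of $\ul{c}_n^t$ in $\widetilde{Z}_n(t)$, driving $\widetilde{P}(t)$ past $(p^2+p+1)\dl$.

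The main obstacle I expect lies in making this compensation quantitative: $\rho_n$ may grow exponentially (so $\ul{c}_n$ can be much smaller than $\ol{c}_n$), and $\ul{G}_{n-1}$ is only a priori controlled from above by $\ol{G}_{n-1}$. A natural way to bypass this is to pass to the shifted subsystem $\Phi_N$ starting at a time $N$ large enough that $\ol{c}_n<\ep$ uniformly, observe that the shift preserves $B_\Phi$, $\HD(J_\Phi)$, $\dl$, and $\limsup_n\frac{1}{n}\log\rho_n$, and then run the analysis on $\Phi_N$, where the uniform smallness of $\ol{c}_n$ renders the balance between the decay of $\ul{c}_n^t$ and the forced growth of $\ul{G}_{n-1}$ quantitative. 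Once $\widetilde{P}(t)>(p^2+p+1)\dl$ is established for all $t<B_\Phi$, Theorem~\ref{thm:LBforHDJv} yields $\HD(J_\Phi)\geq t$, and letting $t\nearrow B_\Phi$ completes the proof.
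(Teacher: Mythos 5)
Your preliminary observation that $\kp(t)=\widetilde{P}(t)$ under the given hypotheses is correct, and the reduction to applying Theorem~\ref{thm:LBforHDJv} is the right frame. The gap is in the crucial step: the claim that $\ol{c}_n\to 0$ ``drives $\widetilde{P}(t)$ past $(p^2+p+1)\dl$'' is not only unjustified, it is generally false for the exponent $t$ itself. Since the volume-packing bound $\ul{G}_{n-1}\ul{c}_n^d\ul{d}_n^d\leq C_n$ with $C_n$ subexponential gives
\begin{align*}
\widetilde{Z}_n(t)\;=\;Z_{n-1}(t)\bigl(\ul{G}_{n-1}\ul{c}_n^d\ul{d}_n^d\bigr)^{t/d}\;\leq\; Z_{n-1}(t)\,C_n^{t/d},
\end{align*}
one always has $\widetilde{P}(t)\leq\ul{P}(t)$. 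When $\dl>0$ and $t$ is close to $B_\Phi$, $\ul{P}(t)$ can be arbitrarily small, so $\kp(t)=\widetilde{P}(t)$ can fall below $(p^2+p+1)\dl$ and the hypothesis of Theorem~\ref{thm:LBforHDJv} simply fails at $t$. Shifting the starting time $N$ does not fix this: the inequality $\widetilde{P}(t)\leq\ul{P}(t)$ survives any time shift, so the obstruction persists.

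The paper's remedy is to shift the \emph{exponent}, not the time. Given $t<B_\Phi$ and $\ep>0$, one sets $t'=t-\ep$ and observes that
\begin{align*}
Z_n(t')\;\geq\; Z_n(t)\cdot\Bigl(\max_{\om\in I^n}\norm{D\phi_\om}\Bigr)^{-\ep}\;\geq\;\prod_{j=1}^n(\ol{c}_j)^{-\ep},
\end{align*}
and since $\ol{c}_n\to 0$ forces $\frac{1}{n}\sum_{j\leq n}(-\log\ol{c}_j)\to\infty$, the partition function $Z_n(t')$ grows \emph{superexponentially}. Dividing by the at-most-exponentially-growing quantity $(\ul{G}_{n-1})^{1-t'/d}\xi_{n-1}\rho_n^{t'}\ul{d}_n^{-t'}$ then gives $\widetilde{P}(t')=\infty$, hence $\kp(t')=\infty$, which trivially dominates $(p^2+p+1)\dl$. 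Theorem~\ref{thm:LBforHDJv} yields $\HD(J_\Phi)>t-\ep$, and letting $\ep\to0$, $t\to B_\Phi$ finishes. Your outline is missing precisely this mechanism for converting the decay of $\ol{c}_n$ into superexponential growth, which is what makes the comparison with $(p^2+p+1)\dl$ automatic rather than a delicate balance.
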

\begin{proof}
	Fix $t<B_\Phi$. Then $Z_n(t)$ grows superexponentially which implies that $\widetilde{P}(t)>0$. 
	To see this let $\ep>0$ and let $t'=t-\ep$. Then for sufficiently large $n$ we have 
	\begin{align*}
	Z_n(t')&=\sum_{\om\in I^n}\norm{D\phi_\om}^{t-\ep}
	=\sum_{\om\in I^n}\norm{D\phi_\om}^t\norm{D\phi_\om}^{-\ep}
	\geq \sum_{\om\in I^n}\norm{D\phi_\om}^t\cdot\left(\max_{\om\in I^n}\norm{D\phi_\om}\right)^{-\ep}\\
	&= Z_n(t)\cdot\left(\max_{\om\in I^n}\norm{D\phi_\om}\right)^{-\ep}\geq Z_n(t)\cdot\prod_{j=1}^n(\ol{c}_j)^{-\ep}
	\geq \prod_{j=1}^n(\ol{c}_j)^{-\ep}.
	\end{align*}
	Since $\overline{c}_n\to 0$ then 
	\begin{align*}
	\frac{\sum_{i=1}^n-\log\ol{c}_n}{n}\to \infty,
	\end{align*}
	and thus we have that 
	\begin{align*}
	\frac{\log Z_n(t')}{n}\to\infty.
	\end{align*}
	In light of \eqref{eqn from cor 7.3}, we have 
	\begin{align*}
	\widetilde{Z}_n(t')\geq\frac{Z_n(t')}{\left(\ul{G}_{n-1}\right)^{1-\frac{t'}{d}}\cdot\rho_n^{t'}\cdot\xi_{n-1}\cdot\ul{d}_n^{-t'}}.
	\end{align*}
	As the numerator grows superexponentially and the denominator grows at most exponentially with $n$ by assumption, we see that $\widetilde{Z}_n(t')\to\infty$, and thus $\HD(J_\Phi)>t'=t-\ep$. Since $t<B_\Phi$ and $\ep>0$ were chosen arbitrarily we see that $\HD(J_\Phi)\geq B_\Phi$, finishing the proof. 
\end{proof}
The following corollary provides explicit estimates for the Hausdorff dimension for systems with at most exponential growth. In particular, we are able to fully calculate the dimension for systems that have regular exponential growth. 
\begin{corollary}\label{cor:BFforWBfrac}
	Suppose that $\Phi$ is a finitely primitive NCGDMS such that 
	\begin{align*}
	0<a_0:=\liminfty{n}\frac{1}{n}\log\ul{G}_n \leq \limsupty{n}\frac{1}{n}\log\ol{G}_n:=a_1<\infty
	\end{align*}
	and 
	\begin{align*}
	0<b_0:=\liminfty{n}\frac{1}{n}\log\left(\frac{1}{\ol{c}_n}\right)\leq \limsupty{n}\frac{1}{n}\log\left(\frac{1}{\ul{c}_n}\right):=b_1<\infty.
	\end{align*}
	Then 
	\begin{align*}
		\frac{a_0}{b_1}\leq\HD(J)\leq\frac{a_1}{b_0}.
	\end{align*}
	Moreover if the associated general limit of the above limits exists and we have that $a:=a_0=a_1$ and $b:=b_0=b_1$, then
	\begin{align*}
		\HD(J_\Phi)=a/b.
	\end{align*}
\end{corollary}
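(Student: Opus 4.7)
The approach is to bound $\HD(J_\Phi)$ above and below separately, using the pressure machinery from Sections~\ref{Sec: Pressure} and~\ref{sec: balancing}. In each direction I convert the hypothesised linear-in-$n$ growth rates of $\log\ol{G}_n$, $\log\ul{G}_n$, $\log(1/\ol{c}_n)$, $\log(1/\ul{c}_n)$ into quadratic-in-$n$ bounds for $\log Z_n(t)$ and $\log\widetilde Z_n(t)$, which then force $\ul P(t)$ or $\widetilde P(t)$ to $\mp\infty$ on the appropriate side of the critical ratio.

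For the upper bound $\HD(J_\Phi)\leq a_1/b_0$, I iterate the submultiplicative estimate
\[
Z_{n+1}(t)\leq Z_n(t)\cdot\ol{G}_n\cdot\ol{c}_{n+1}^{\,t},
\]
which is the first line of the chain producing \eqref{eqn from cor 7.3}, to obtain $Z_n(t)\leq Z_1(t)\prod_{j=1}^{n-1}\ol{G}_j\,\ol{c}_{j+1}^{\,t}$. Given $\ep>0$, the hypotheses supply $\log\ol{G}_j\leq(a_1+\ep)j$ and $\log\ol{c}_j\leq-(b_0-\ep)j$ for all $j$ sufficiently large, so
\[
\log Z_n(t)\leq\const+\tfrac{1}{2}\bigl((a_1+\ep)-t(b_0-\ep)\bigr)n^2.
\]
For any $t>a_1/b_0$ the coefficient of $n^2$ is strictly negative once $\ep$ is small; hence $Z_n(t)\to 0$ super-exponentially, $\ul P(t)=-\infty<0$, and $B_\Phi\leq a_1/b_0$. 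Lemma~\ref{lem:HDlessB} then gives $\HD(J_\Phi)\leq a_1/b_0$.

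For the lower bound $\HD(J_\Phi)\geq a_0/b_1$, first I observe that $\Phi$ is barely balanced: since $\rho_n\leq 1/\ul c_n$ and $\log(1/\ul c_n)=O(n)$, we have $\log(1+\log\rho_n)=O(\log n)$, so $n^{-1}\log(1+\log\rho_n)\to 0$ and Lemma~\ref{lem:LBbbs}(2) is available. Iterating the dual inequality~\eqref{ineq *1 pg 20},
\[
Z_{n+1}(t)\geq K^{-2t}\,\ul c_{n+1}^{\,t}\,\ul G_n\,Z_n(t),
\]
yields $Z_n(t)\geq Z_1(t)K^{-2nt}\prod_{j=1}^{n-1}\ul c_{j+1}^{\,t}\ul G_j$. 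Recalling $\widetilde Z_n(t)=Z_{n-1}(t)(\ul G_{n-1})^{t/d}\ul c_n^{\,t}\ul d_n^{\,t}$, the lower bounds $\log\ul G_j\geq(a_0-\ep)j$ and $\log\ul c_j\geq -(b_1+\ep)j$, together with $n^{-1}\log\ul d_n\to 0$ from the Diameter Condition, give
\[
\log\widetilde Z_n(t)\geq\const+\tfrac{1}{2}\bigl((a_0-\ep)-t(b_1+\ep)\bigr)n^2.
\]
For $t<a_0/b_1$ this coefficient is positive for small $\ep$, so $\widetilde Z_n(t)\to\infty$, $\widetilde P(t)=+\infty>0$, and Lemma~\ref{lem:LBbbs}(2) yields $\HD(J_\Phi)\geq t$. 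Letting $t\uparrow a_0/b_1$ finishes this direction, and the \emph{moreover} clause is immediate from the two bounds when $a_0=a_1$ and $b_0=b_1$.

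The only subtlety in execution is that, because $\ol G_n$ and $1/\ol c_n$ themselves grow exponentially, the iterated estimates above are quadratic in $n$ in the exponent rather than exhibiting the linear growth typical of the autonomous or subexponential setting. This is precisely what makes the critical ratios $a_1/b_0$ and $a_0/b_1$ sharp and, in the regular case, pins $\HD(J_\Phi)$ to the single value $a/b$; there is no deeper obstacle than careful bookkeeping of the $O(n^2)$ error terms.
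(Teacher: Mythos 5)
Your proof is correct, and it reaches the same two bounds via the same recursive inequalities, but it routes around the paper's proof in one noticeable way. The paper begins by invoking Corollary~\ref{cor:BFforExpGrowth}: it observes that $b_0>0$ forces $\ol c_n\to 0$ and $b_1<\infty$ forces $\limsup_n\frac{1}{n}\log\rho_n<\infty$, so Bowen's formula $\HD(J_\Phi)=B_\Phi$ holds, and then it sandwiches $B_\Phi$ directly by bounding the ratio $Z_{n+1}(t)/Z_n(t)$ between $\const\cdot\ul G_n\ul c_{n+1}^t$ and $\ol G_n\ol c_{n+1}^t$. You instead skip Corollary~\ref{cor:BFforExpGrowth} entirely: for the upper bound you use only Lemma~\ref{lem:HDlessB} ($\HD\leq B_\Phi$), and for the lower bound you verify the barely-balanced condition from $\rho_n\leq 1/\ul c_n$ and $b_1<\infty$, show $\widetilde P(t)=+\infty$ for $t<a_0/b_1$ by iterating \eqref{ineq *1 pg 20} to get a quadratic-in-$n$ exponent, and then apply Lemma~\ref{lem:LBbbs}(2). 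This is slightly more self-contained and makes explicit why the critical ratios appear — the exponential rates of $\ol G_n,\ul G_n,\ol c_n,\ul c_n$ feed into an $O(n^2)$ exponent whose sign flips at $a_1/b_0$ and $a_0/b_1$ — whereas the paper hides the $O(n^2)$ bookkeeping inside the cited corollaries and argues only with the telescoping ratio. Both arguments rest on the same two inequalities ($Z_{n+1}\leq Z_n\ol G_n\ol c_{n+1}^t$ and $Z_{n+1}\geq K^{-2t}\ul c_{n+1}^t\ul G_n Z_n$), so the difference is organizational rather than substantive. One small remark: since you obtain $\widetilde P(t)=+\infty$, you actually have $\kp(t)=\infty$, which makes your application of Lemma~\ref{lem:LBbbs}(2)/Theorem~\ref{thm:LBforHDJv} immune to the size of $\dl=a_1$; this is the same mechanism the paper relies on inside Corollary~\ref{cor:BFforExpGrowth}.
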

\begin{proof}
	As the second assumption implies that $\limty{n}\ol{c}_n=0$ and that $\limsupty{n}\frac{1}{n}\log\rho_n<\infty$, Corollary \ref{cor:BFforExpGrowth} assures us that Bowen's formula holds. Thus it suffices to appropriately estimate $B_\Phi$. Referencing \eqref{ineq *1 pg 20} from the proof of Theorem \ref{thm:LBforHDJv}, and combining with the inequality
	\begin{align*}
		Z_{n+1}(t)\leq Z_n(t)\cdot\ul{G}_{n}\cdot\xi_{n}\cdot\ol{c}_{n+1}^t,
	\end{align*} 
	we see that for any $t\geq 0$ we have
	\begin{align*}
	K^{-2t}\cdot Z_n(t)\cdot\ul{G}_{n}\cdot\ul{c}_{n+1}^t\leq Z_{n+1}(t)\leq  Z_n(t)\cdot\ul{G}_{n}\cdot\xi_{n}\cdot\ol{c}_{n+1}^t.
	\end{align*}
	Dividing both sides by $Z_n(t)$ gives 
	\begin{align*}
	\const\cdot\ul{G}_{n}\cdot\ul{c}_{n+1}^t\leq\frac{Z_{n+1}(t)}{Z_n(t)}\leq\ol{G}_{n}\cdot\ol{c}_{n+1}^t.
	\end{align*}
	Then by assumption if $t>\frac{a_1}{b_0}$, the right hand side goes to 0 as $n\to\infty$. For $t<\frac{a_0}{b_1}$ the left hand side goes to $\infty$. Thus we see that $Z_n(t)$ tends to $0$ for $t>\frac{a_1}{b_0}$ and to $\infty$ for $t<\frac{a_0}{b_1}$. And thus we have that 
	\begin{align*}
		\frac{a_1}{b_0}\leq B_\Phi\leq\frac{a_0}{b_1}.
	\end{align*}
\end{proof}
The next theorem concerns systems with extremal dimension.
\begin{theorem}\label{thm:ExtrHDandBC}
	Let $\Phi$ be a finitely primitive NCGDMS such that 
	\begin{align*}
	\limsup_{n\to\infty}\frac{\log\ol{G}_n}{n}<\infty \spand \limsup_{n\to\infty}\frac{\log\rho_n}{n}<\infty.
	\end{align*}
	If $\HD(J_\Phi)=0$ then $B_\Phi=0$ and if $B_\Phi=d$ then $\HD(J_\Phi)=d$.
\end{theorem}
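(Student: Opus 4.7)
The approach splits based on the behavior of $\ol{c}_n$, using Corollary~\ref{cor:BFforExpGrowth} as the workhorse. The hypothesis $\limsup_n\log\rho_n/n<\infty$ forces $\log(1+\log\rho_n)=O(\log n)$, so $\Phi$ is barely balanced and Lemma~\ref{lem:LBbbs}(2) applies: exhibiting $t$ with $\widetilde{P}(t)\geq 0$ gives $\HD(J_\Phi)\geq t$. If $\lim_n\ol{c}_n=0$, then Corollary~\ref{cor:BFforExpGrowth} yields $\HD(J_\Phi)=B_\Phi$ directly and both implications are immediate. So assume $\limsup_n\ol{c}_n>0$, and fix $c_0>0$ with $\ol{c}_{n_k}\geq c_0$ along a subsequence.

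For Claim~1, the strict-decrease of $\ul{P}$ (established in Section~\ref{Sec: Pressure}) gives $\ul{P}(0)\geq \ul{P}(t_0)+t_0\log(1/\eta)>0$ for any $t_0\in(0,B_\Phi)$. Since $\widetilde{Z}_n(0)=\#I^{n-1}$, we get $\widetilde{P}(0)=\ul{P}(0)>0$. The partition-function inequality $Z_n(t)\leq K^{2t}Z_{n-1}(t)\ol{G}_{n-1}\ol{c}_n^{\,t}$ yields
\[
\widetilde{Z}_n(t)\;\geq\;\const\cdot Z_n(t)\,\ul{G}_{n-1}^{t/d-1}\,\xi_{n-1}^{-1}\,\rho_n^{-t}\,\ul{d}_n^{\,t},
\]
whence $\widetilde{P}(t)\geq \ul{P}(t)+(t/d-2)a-tc$, with $a=\limsup\log\ol{G}_n/n$ and $c=\limsup\log\rho_n/n$. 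For $t>0$ sufficiently small, $\ul{P}(t)$ remains close to $\ul{P}(0)>0$ and dominates the bounded correction, making $\widetilde{P}(t)>0$; Lemma~\ref{lem:LBbbs}(2) then concludes $\HD(J_\Phi)\geq t>0$.

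For Claim~2, fix arbitrary $t<d$; since $t<B_\Phi=d$, $\ul{P}(t)>0$. The goal is $\HD(J_\Phi)\geq t$ for every such $t$, yielding $\HD(J_\Phi)=d$. I would invoke Theorem~\ref{thm:LBforHDJv}, which under barely-balancedness reduces to verifying $\widetilde{P}(t)>(p^2+p+1)a+c$. The main obstacle is that the direct estimate $\widetilde{P}(t)\geq \ul{P}(t)+(t/d-2)a-tc$ degenerates as $t\to d^-$ because $\ul{P}(t)\to\ul{P}(d^-)\leq 0$. The plan is to finely exploit the fact that $(t/d-1)\to 0$ (making the $\ul{G}_{n-1}^{t/d-1}$ factor approach $1$) together with the pressure-monotonicity boost $\ul{P}(t)\geq \ul{P}(t')+(t'-t)\log(1/\eta)$ using $t'\in(t,d)$, so that $\ul{P}(t)$ is made large enough to overcome $(p^2+p+2)a+(d+1)c$ for $t$ arbitrarily close to $d$. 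The finite exponential-growth bounds on $\ol{G}_n$ and $\rho_n$, which keep $a$ and $c$ finite, are essential for this trade-off to be feasible; rigorously balancing these competing requirements is the technical heart of the proof.
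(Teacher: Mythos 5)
Your reduction to the case $\lim_n\ol{c}_n=0$ via Corollary~\ref{cor:BFforExpGrowth} is fine, and the intermediate inequality $\widetilde{Z}_n(t)\geq\const\cdot Z_n(t)\,\ul{G}_{n-1}^{t/d-1}\,\xi_{n-1}^{-1}\,\rho_n^{-t}\,\ul{d}_n^{\,t}$ is correct; the gap lies in how you pass from it to a positive lower bound. For Claim~1, the scalar consequence $\widetilde{P}(t)\geq\ul{P}(t)+(t/d-2)a-tc$ cannot deliver $\widetilde{P}(t)>0$ for small $t$: as $t\to 0^+$ the correction tends to $-2a$, not to $0$, and nothing forces $\ul{P}(t)$ to exceed $2a$ (you only have $\ul{P}(t)\geq\ul{P}(t_0)+(t_0-t)\log(1/\eta)$; "close to $\ul{P}(0)$" is unjustified, since $\ul{P}$ can jump at $0$). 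Concretely, take a stationary NCIFS on an interval which at every time has two similarities of ratio $1/4$, except at a sparse sequence of times $m_k$ (say $m_k=2^k$) where the alphabet consists of $e^{am_k}$ similarities all of ratio $e^{-2am_k}$; then $\rho_n\equiv1$ (so $c=0$), $\limsup_n\frac1n\log\ol{G}_n=a$, and a direct computation gives $\ul{P}(t)=\log 2+a-t(\log 4+2a)$ for $t<1/2$, so $B_\Phi=1/2>0$, while your bound reads $\log 2-a-t(\log 4+a)$, which is negative for every $t$ once $a\geq\log 2$. So the estimate is genuinely too lossy, even though $\widetilde{P}(t)>0$ does hold for small $t$ in this example. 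The paper's proof succeeds precisely because it avoids worst-casing $\ul{G}$, $\xi$ and $\ul{c}$ separately: it fixes $t_0<B_\Phi$, uses $Z_{n+1}(t_0)\leq Z_n(t_0)\ul{G}_n\xi_n\rho_{n+1}^{t_0}\ul{c}_{n+1}^{t_0}$ to control the \emph{combination} $\log Z_n(t_0)+t_0\log\ul{c}_{n+1}$ (see \eqref{eqn:ZnTildePressureIneq}), and then writes $\log\widetilde{Z}_{n+1}(t)$ as $\left(1-\frac{t}{t_0}\right)\log Z_n(t_0)$ plus $\frac{t}{t_0}$ times that controlled quantity, so that all lossy terms carry the coefficient $\frac{t}{t_0}\to 0$ and the bound tends to $\ul{P}(t_0)>0$.

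For Claim~2 you explicitly leave the decisive step open, and the route you sketch cannot close: verifying the hypothesis of Theorem~\ref{thm:LBforHDJv} through your bound would require $\ul{P}(t)>(p^2+p+2)a+tc$ for $t$ near $d$, but the monotonicity boost only yields $\ul{P}(t)\geq(d-t)\log(1/\eta)\to 0$, and $B_\Phi=d$ is perfectly compatible with $\ul{P}(t)\to 0$ as $t\to d^-$ while $a>0$ is a fixed constant; no trade-off between $t$ and $t'$ repairs this. The paper's argument for this half is of a different nature: it does not apply Theorem~\ref{thm:LBforHDJv} as a black box but re-runs its mass-distribution proof up to \eqref{eqn:mleqfrac}, and then uses the geometric relation $r\leq\diam(Y_\om)\leq\const\cdot\eta^{\absval{\om}}$ to replace $r^{t-t'}$ by $\eta^{n(t-t')}$. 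That extra exponential decay in the word length $n$, at rate $(t-t')\log(1/\eta)$ which stays bounded away from $0$ as $t\to d$ for fixed $t'$, is what absorbs $\left(\ul{G}_n\right)^{1-t/d}$ (whose rate $a(1-t/d)$ vanishes as $t\to d$), after which the exponential growth of $Z_{n+1}(t)$ finishes the estimate and yields $m(B)\leq\const\cdot r^{t'}$, hence $\HD(J_\Phi)\geq t'$ for every $t'<d$. This coupling of the radius $r$ to the word length $n$ is the ingredient missing from your plan.
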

\begin{proof}
	Recall that for any $t\geq 0$ we have the function 
	\begin{align*}
	\widetilde{P}(t)=\liminf_{n\to\infty}\frac{1}{n}\log \widetilde{Z}_n(t).
	\end{align*}
	Now, in light of \eqref{xi less than G up }, set 
	\begin{align*}
	\ul{a}=\liminf_{n\to\infty}\frac{\log\ul{G}_n}{n}\geq 0, \qquad \ol{a}=\limsup_{n\to\infty}\frac{\log\ol{G}_n}{n}<\infty, \qquad \ol{\xi}=\limsup_{n\to\infty}\frac{\log\xi_n}{n}.
	\end{align*}
	To see the first claim by way of contraposition, suppose that $B_\Phi>0$ and let 
	\begin{align*}
	\overline{\rho}=\limsup_{n\to\infty}\frac{\log\rho_n}{n}<\infty.
	\end{align*}
	Now fix $0<t_0<B_\Phi$, so that $\ul{P}(t_0)>0$, and recall that
	\begin{align*}
	Z_{n+1}(t_0)\leq Z_n(t_0)\cdot\ol{G}_{n}\cdot\ol{c}_{n+1}^{t_0}=Z_n(t_0)\cdot\ul{G}_{n}\cdot\xi_{n}\cdot\rho_{n+1}^{t_0}\cdot\ul{c}_{n+1}^{t_0}.
	\end{align*}
	Taking a logarithm, subtracting, and then dividing each side by $n$ leaves us with 
	\begin{align*}
	\frac{\log Z_{n+1}(t_0)-\log\ul{G}_{n}-\log\xi_{n}-t_0\log\rho_{n+1}}{n}\leq\frac{\log Z_n(t_0)+t_0\log\ul{c}_{n+1}}{n}.
	\end{align*}
	Taking a liminf of both sides, we see
	\begin{align}\label{eqn:ZnTildePressureIneq}
	-\infty<\ul{P}(t_0)-\ol{a}-t_0\ol{\rho}-\ol{\xi}\leq\liminf_{n\to\infty}\frac{\log Z_n(t_0)+t_0\log\ul{c}_{n+1}}{n}.
	\end{align}
	Now let $0<t<t_0$. Then we have 
	\begin{align*}
	\log\widetilde{Z}_{n+1}(t)&=\log Z_n(t)+\frac{t}{d}\log\ul{G}_{n}+t\log\ul{c}_{n+1}+t\log\ul{d}_{n+1}\\
	&>\log Z_n(t_0)+\frac{t}{d}\log\ul{G}_{n}+t\log\ul{c}_{n+1}+t\log\ul{d}_{n+1}\\
	&=\left(1-\frac{t}{t_0}\right)\log Z_n(t_0)+\frac{t}{d}\log\ul{G}_{n}+\frac{t}{t_0}\left(\log Z_n(t_0)+t_0\log\ul{c}_{n+1}+t_0\log\ul{d}_{n+1}\right).
	\end{align*}
	In light of \eqref{eqn:ZnTildePressureIneq} and the Diameter Condition, we have 
	\begin{align*}
	\widetilde{P}(t)\geq \left(1-\frac{t}{t_0}\right)\cdot\ul{P}(t_0)+\frac{t}{d}\cdot\ul{a}+\frac{t}{t_0}\cdot\left(\ul{P}(t_0)-\ol{a}-t_0\ol{\rho}-\ol{\xi}\right).
	\end{align*}
	Letting $t\to 0$, and recalling that $B_\Phi>0$, we see that the right hand side tends to $\ul{P}(t_0)$, but for sufficiently small $t>0$, we have 
	$\widetilde{P}(t)>0$. Hence, Lemma \ref{lem:LBbbs} gives us that $\HD(J_\Phi)\geq t>0$.
	
	For the second claim suppose that $B_\Phi=d$. Let $t'<d$ and $t\in(t',d)$ to be chosen sufficiently close to $d$ later. We recall the proof of Theorem \ref{thm:LBforHDJv}. Let $m$ be the measure constructed there and let $B$ be a ball of radius $r>0$. We continue the proof analogously up to \eqref{eqn:mleqfrac}, which says that
	\begin{align*}
	m\left(\union_{a\in C_\om}Y_{\om a}\right)\leq\frac{\const\cdot r^t}{Z_{n+1}(t)}\cdot\left(\ul{G}_{n}\right)^{1-\frac{t}{d}}\cdot e^{\ep' (n+1)}\cdot\ul{d}_{n+1}^{-t}.
	\end{align*}
	Recall that $\ul{G}_{n}$ grows at most exponentially fast and that we have $n=\absval{\om}$, where $\om$ was such that 
	\begin{align*}
	r\leq\diam(Y_\om)\leq\const\cdot\norm{D\phi_\om}\leq\const\cdot\eta^n.
	\end{align*}
	Thus we have
	\begin{align*}
	m\left(\union_{a\in C_\om}Y_{\om a}\right)&\leq\frac{\const\cdot r^{t'}}{Z_{n+1}(t)}\cdot r^{t-t'}\cdot\left(\ul{G}_{n}\right)^{1-\frac{t}{d}}\cdot e^{\ep' (n+1)}\cdot\ul{d}_{n+1}^{-t}\\
	&\leq \frac{\const\cdot r^{t'}}{Z_{n+1}(t)}\cdot\eta^{n(t-t')}\cdot\left(\ul{G}_{n}\right)^{1-\frac{t}{d}}\cdot e^{\ep'(n+1)}\cdot\ul{d}_{n+1}^{-t}.
	\end{align*}
	If $t$ is chosen sufficiently close to $d$ (depending on $\eta$, the exponential growth of $\ul{G}_{n}$, and $t'$), then $\eta^{n(t-t')}$ tends to 0 faster than $\left(\ul{G}_{n+1}\right)^{1-\frac{t}{d}}$ tends to $\infty$. Furthermore, the Diameter Condition implies that for sufficiently large $n$, we have $\ul{d}_{n+1}\leq e^{\ep'(n+1)}$. Hence, for large $n$, we have 
	\begin{align}
	m\left(\union_{a\in C_\om}Y_{\om a}\right)&\leq\frac{\const\cdot r^{t'}}{Z_{n+1}(t)}\cdot e^{(1-t)\ep'(n+1)}.\label{insert zn grows exp here}
	\end{align}
	Now let $\ut\in W'$ and $\om\in W(\ut)$. Since $\rho_n$ grows at most exponentially, and $Z_n(t)$ grows at least exponentially, we see that for sufficiently large $n$
	\begin{align}
		Z_{n+1}(t)\geq e^{(1-t)\ep'(n+1)}\left(1+\log\rho_{\absval{\ut}+1}+\sup_{k\geq 1}\log\frac{\ol{d}_{n+k}}{\ul{d}_{n+1}}\right)\label{zn grows exp rho not}.
	\end{align}
	Now inserting \eqref{zn grows exp rho not} into \eqref{insert zn grows exp here}, we see
	\begin{align*}
	m\left(\union_{a\in C_\om}Y_{\om a}\right)&\leq\frac{\const\cdot r^{t'}}{Z_{n+1}(t)}\cdot e^{(1-t)\ep'(n+1)}
	\leq \frac{\const\cdot r^{t'}\cdot e^{(1-t)\ep'(n+1)}}{e^{\ep'(n+1)}\left(1+\log\rho_{\absval{\ut}+1}+\sup_{k\geq 1}\log\frac{\ol{d}_{n+k}}{\ul{d}_{n+1}}\right)}\\
	&=\frac{\const\cdot r^{t'}}{1+\log\rho_{\absval{\ut}+1}+\sup_{k\geq 1}\log\frac{\ol{d}_{n+k}}{\ul{d}_{n+1}}}.
	\end{align*}
	Continuing as in the proof of Theorem \ref{thm:LBforHDJv}, we see that $\HD(J_\Phi)\geq t'$. As $t'<d$ was arbitrary we have that $\HD(J_\Phi)= B_\Phi=d$ as claimed.
\end{proof}

\section{Finite and Infinite Non-Stationary NCIFS}\label{section Rem-Urb NCIFS}

In \cite{rempe-gillen_non-autonomous_2016}, Rempe-Gillen and Urba\'nski were able to remove the balancing conditions and show that Bowen's formula holds for all subexponentially bounded and stationary NCIFS as well as for certain reasonable classes of infinite stationary NCIFS. In this short section, we generalize their results to the case of \textit{non-stationary} NCIFS. However, we do not include proofs as the corresponding proofs of their analogous stationary results from \cite{rempe-gillen_non-autonomous_2016} go through with perhaps only minor notational changes, if any. 

\begin{theorem}\label{thm: BF nonstat NCIFS}
	If $\Phi$ is a finite, non-stationary NCIFS which is subexponentially bounded, then Bowen's formula holds.  
\end{theorem}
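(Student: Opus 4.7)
The upper bound $\HD(J_\Phi)\leq B_\Phi$ is already given by Lemma \ref{lem:HDlessB}, so the content of the theorem is the reverse inequality $\HD(J_\Phi)\geq B_\Phi$. Since $\Phi$ is a NCIFS, it is $0$-finitely primitive (every letter is allowed to follow every letter) and each $V_n$ is a singleton, so $\ul{d}_n$ and $\ol{d}_n$ refer to the same space $X^{(n)}$. In particular $\ol{G}_n=\#I^{(n+1)}$, so subexponential boundedness translates directly into $\lim_n n^{-1}\log\ol{G}_n=0$. My plan is to run the mass-distribution argument of Rempe-Gillen and Urba\'nski, the only adjustment being that the constants $\diam(X^{(n)})$ change with $n$ and must be absorbed by invoking the Diameter Condition.

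Fix $t<B_\Phi$ and a small $\ep>0$ so that $Z_n(t)\geq e^{-\ep n}$ along a subsequence $n_k\to\infty$ (in fact, by definition of $B_\Phi$ one can take $Z_n(t)\to\infty$). On each level $j=n_k$ define a probability measure on $Y_j$ by
\begin{align*}
\mu_j(Y_\om)=\frac{\norm{D\phi_\om}^t}{Z_j(t)}, \qquad \om\in I^j,
\end{align*}
distributed uniformly inside each cylinder. Let $\mu$ be any weak-$*$ accumulation point of $(\mu_j)$; the goal is to prove that $\mu(B(x,r))\leq \const\cdot r^t$ for all sufficiently small $r>0$, and then apply the mass distribution principle to conclude $\HD(J_\Phi)\geq t$, which, as $t<B_\Phi$ was arbitrary, gives the theorem.

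For a ball $B=B(x,r)$, I would use the stopping-time construction from the proof of Theorem \ref{thm:LBforHDJv}: let $W$ be the collection of finite admissible $\om$ with $B\cap Y_\om\neq\emptyset$, $\diam(Y_\om)\geq r$, and $\diam(Y_{\om a})<r$ for some $a\in I^{(|\om|+1)}$, and let $W'\subset W$ consist of words that are not extensions of other words in $W$. The Open Set Condition together with the Uniform Cone Condition show that $\#W'$, and indeed $\#(I^k\cap W)$ for each $k$, is bounded by a constant $P$ independent of $r$, exactly as in the cited proof. For $\om\in W$ with $n=|\om|$, let $C_\om=\{a\in I^{(n+1)}: Y_{\om a}\cap B\neq\emptyset,\ \diam(Y_{\om a})\leq r\}$. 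Then
\begin{align*}
\mu_j\!\left(\bigcup_{a\in C_\om} Y_{\om a}\right)
\leq \frac{\norm{D\phi_\om}^t}{Z_j(t)}\cdot\sum_{a\in C_\om}\norm{D\phi_a^{(n+1)}}^t.
\end{align*}
The disjoint-interior covering estimate (together with \eqref{MV inv ineq}) gives $\sum_{a\in C_\om}\norm{D\phi_a^{(n+1)}}^d\leq \const\cdot r^d/(\norm{D\phi_\om}^d\ul{d}_{n+1}^d)$, and then H\"older's inequality with exponent $d/t$ yields
\begin{align*}
\sum_{a\in C_\om}\norm{D\phi_a^{(n+1)}}^t
\leq (\#I^{(n+1)})^{1-t/d}\cdot\left(\sum_{a\in C_\om}\norm{D\phi_a^{(n+1)}}^d\right)^{t/d}
\leq \const\cdot\frac{(\#I^{(n+1)})^{1-t/d}\,r^t}{\norm{D\phi_\om}^t\,\ul{d}_{n+1}^{\,t}}.
\end{align*}

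Combining these estimates, summing the (at most $P$) contributions from words in $W'$, and keeping track of the at most $P$ words per level yields
\begin{align*}
\mu_j(B)\leq \const\cdot r^t\cdot \sup_{n\leq \Delta_B}\frac{(\#I^{(n+1)})^{1-t/d}}{Z_j(t)\,\ul{d}_{n+1}^{\,t}},
\end{align*}
where $\Delta_B=\max_{\om\in W}|\om|$ is the maximal stopping level. Sending $j\to\infty$ along the subsequence on which $Z_j(t)$ stays bounded away from zero (or grows), the subexponential growth of $\#I^{(n)}$ together with \eqref{dn low sub exp bdd} from the Diameter Condition shows that $(\#I^{(n+1)})^{1-t/d}\ul{d}_{n+1}^{-t}$ grows only subexponentially in $\Delta_B$; a standard adjustment of the choice of $t$ (replacing $t$ by $t-\eta$ for arbitrarily small $\eta>0$ and absorbing subexponential errors into the exponential growth of $Z_n(t-\eta)$, exactly as in \eqref{ineq: butter knife}) gives $\mu(B)\leq \const\cdot r^{t-\eta}$ and hence $\HD(J_\Phi)\geq t-\eta$.

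The one genuinely new issue compared to the stationary setting of Rempe-Gillen and Urba\'nski is that the diameters $\diam(X^{(n)})$ are no longer constant, so factors such as $\ul{d}_{n+1}^{-t}$ and $\ol{d}_n^t$ appear in the estimates; the Diameter Condition is exactly what ensures these factors grow subexponentially and can therefore be absorbed into an arbitrarily small loss in the exponent $t$. This is the ``minor notational change'' referred to in the theorem statement, and with it the Rempe-Gillen--Urba\'nski argument carries over verbatim.
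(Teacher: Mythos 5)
Your route differs from the paper's: the paper does not argue directly, but invokes the Rempe-Gillen--Urba\'nski scheme (their Theorem 5.3), extracting from a subexponentially bounded system a weakly balanced subsystem whose pressure is arbitrarily close to that of the full system, applying the balanced-case result (as in Corollary \ref{cor:BFwbs}) to the subsystem, and using monotonicity of limit sets. Your plan to run the mass-distribution argument of Theorem \ref{thm:LBforHDJv} directly, without any subsystem, is viable in principle, but as written it has a genuine gap. The displayed inequality $\mu_j\bigl(\bigcup_{a\in C_\om}Y_{\om a}\bigr)\leq\frac{\norm{D\phi_\om}^t}{Z_j(t)}\sum_{a\in C_\om}\norm{D\phi_a^{(n+1)}}^t$ is false for $n+1<j$: the left-hand side equals $\sum_{a\in C_\om}\sum_{\gm\in I^{n+2,j}}\norm{D\phi_{\om a\gm}}^t/Z_j(t)$ and so carries the tail factor $Z_{n+2,j}(t)$, which your bound drops; indeed your final estimate, with $Z_j(t)\to\infty$ in the denominator, would force $\mu(B)=0$ for every ball, contradicting that the $\mu_j$ are probability measures. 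The correct step, as in \eqref{eqn:pq1lessthanfraction}--\eqref{mj of Yom} with $p=0$, uses $Z_j(t)\geq K^{-2t}Z_{n+1}(t)Z_{n+2,j}(t)$ to cancel the tail, leaving $Z_{n+1}(t)$ --- a partition function at the stopping level $n$, not at the normalizing level $j$ --- in the denominator.

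This matters because the growth input you invoke, that $Z_j(t)$ stays bounded away from zero along a subsequence, is then both about the wrong index and too weak. What actually closes the argument is that $t<B_\Phi$ forces $\ul{P}(t)>0$ (by strict monotonicity of the lower pressure), so $Z_{n+1}(t)$ grows exponentially in $n$ for all large $n$; this exponential growth is what beats the subexponential factors $(\#I^{(n+1)})^{1-t/d}\,\ul{d}_{n+1}^{-t}$ and, crucially, makes the per-level contributions summable over the stopping levels, so that one never needs the bound on $\#W_\ut$ in terms of $\log\rho_n$ which is precisely where the balancing hypotheses enter Theorem \ref{thm:LBforHDJv}. Your write-up never confronts this point: you take a supremum over $n\leq\Delta_B$ instead of summing over the stopping levels (of which there may be on the order of $\log(1/r)$), and you never explain why the lack of any balancing condition --- the genuinely new difficulty here, rather than the varying diameters $\diam(X^{(n)})$ --- is harmless in this setting. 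With the corrected denominator $Z_{n+1}(t)$ and the explicit use of $\ul{P}(t)>0$, your direct argument can be completed and would give an alternative to the paper's subsystem proof; as written, however, it does not establish the lower bound.
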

As mentioned, the proof of this theorem is the same as Theorem 5.3 of \cite{rempe-gillen_non-autonomous_2016}. Rempe-Gillen and Urba\'nski accomplished this by showing that any subexponentially bounded stationary NCIFS has a subsystem, that is weakly balanced, for which the pressure is arbitrarily close to the pressure of the whole system. As Bowen's formula holds, for the weakly balanced subsystem, they were able to show that Bowen's formula must also hold for the whole system. The change from stationary to non-stationary does not affect the existence of such a subsystem, however it is worth pointing out that this same technique becomes much more difficult when considering a NCGDMS in its full generality. In particular, without further assumptions on which strings of letters are admissible, we can not form a satisfactory subsystem for which the pressure remains close to that of the original. In the sequel we will give two classes of systems with such additional assumptions for which we are able to remove the balancing conditions by forming appropriate subsystem.

We now introduce the notion of an infinite non-stationary NCIFS. As is the case for stationary systems, we will assume that each of the alphabets $I^{(n)}$ are infinite, and in fact equal to $\NN$ for convenience.
\begin{definition}\label{def: class M}
	Suppose that $\Phi$ is an infinite NCIFS, not necessarily stationary, and that $I^{(n)}=\NN$ for each $n\in\NN$. We say that$\Phi$ belongs to \textit{class $\cM$} if the following conditions are satisfied for each $t\in (0,d)$ and each $\ep>0$. 
	\begin{enumerate}
		\item The sums $Z_{(n)}(t)$ are either infinite or finite for all $n\in\NN$.
		\item If $Z_{(1)}(t)<\infty$, then 
		\begin{align*}
		\limty{n}\frac{\sum_{k\leq e^{\ep n}}\norm{D\phi_k^{(n)}}^t}{Z_{(n)}(t)}=1.
		\end{align*}
		\item If $Z_{(1)}(t)=\infty$, then
		\begin{align*}
		\limty{n}\sum_{k\leq e^{\ep n}}\norm{D\phi_k^{(n)}}^t=\infty.
		\end{align*}
	\end{enumerate}
\end{definition}
\begin{theorem}\label{thm: BF nonstat class M}
	If $\Phi$ is in class $\cM$, then Bowen's formula holds. 
\end{theorem}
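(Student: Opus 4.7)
The plan is to reduce $\Phi$ to a finite, subexponentially bounded NCIFS and then invoke Theorem~\ref{thm: BF nonstat NCIFS}. Since Lemma~\ref{lem:HDlessB} already gives $\HD(J_\Phi)\leq B_\Phi$ for free, it suffices to show that for every $t<B_\Phi$ there is a finite subsystem $\Phi_N\sub\Phi$ with $J_{\Phi_N}\sub J_\Phi$ and $B_{\Phi_N}\geq t$. Applying Theorem~\ref{thm: BF nonstat NCIFS} to $\Phi_N$ would then yield $\HD(J_\Phi)\geq\HD(J_{\Phi_N})=B_{\Phi_N}\geq t$, and letting $t\nearrow B_\Phi$ would complete the argument.

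I would build $\Phi_N$ by restricting each alphabet $I^{(n)}=\NN$ to a finite initial segment $\{1,\dots,N_n\}$ with $\limty{n}\frac{1}{n}\log N_n=0$; this automatically makes $\Phi_N$ a subexponentially bounded non-stationary NCIFS. The choice of the thresholds $N_n$ splits along the dichotomy enforced by condition (1) of class $\cM$. In the case $Z_{(n)}(t)=\infty$ for every $n$, condition (3) supplies, for each $\ep>0$, $\sum_{k\leq e^{\ep n}}\norm{D\phi_k^{(n)}}^t\to\infty$; a diagonal extraction over $\ep_k\downarrow 0$ produces $N_n=e^{o(n)}$ with $Z_{(n)}^{\Phi_N}(t)\geq M$ for all large $n$, where $M>K^{2t}$ is fixed in advance. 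The product estimate
\begin{align*}
K^{-2(n-1)t}\prod_{j=1}^n Z_{(j)}^{\Phi_N}(t)\leq Z_n^{\Phi_N}(t)\leq \prod_{j=1}^n Z_{(j)}^{\Phi_N}(t),
\end{align*}
which follows from the Bounded Distortion Property together with the fact that an NCIFS carries no admissibility constraints, then forces $Z_n^{\Phi_N}(t)\geq\const\cdot(MK^{-2t})^n$, so $\ul{P}^{\Phi_N}(t)>0$ and $B_{\Phi_N}\geq t$.

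The remaining case $Z_{(n)}(t)<\infty$ is more delicate. Condition (2) guarantees, for every $\ep>0$, that $Z_{(n)}^{\Phi_N}(t)/Z_{(n)}^\Phi(t)\to 1$ when the truncations occur at level $e^{\ep n}$, so a diagonal extraction again produces $N_n=e^{o(n)}$ with these ratios tending to $1$; the task is then to show that the excluded mass does not destroy the exponential growth rate of $Z_n^{\Phi_N}(t)$ once the bounded-distortion correction is absorbed. This is precisely the quantitative matching carried out for the corresponding stationary result in \cite{rempe-gillen_non-autonomous_2016}, and as the paper remarks, only notational adjustments are required in the non-stationary setting because the class $\cM$ hypotheses are imposed alphabet by alphabet and the argument never interacts with a nontrivial long-range incidence structure. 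The main technical obstacle is therefore this second case, where one must simultaneously satisfy the subexponential growth bound on $N_n$ and a rate of convergence $Z_{(n)}^{\Phi_N}(t)/Z_{(n)}^\Phi(t)\to 1$ fast enough to compensate for the $K^{2(n-1)t}$ gap between $Z_n(t)$ and $\prod_j Z_{(j)}(t)$.
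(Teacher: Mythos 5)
Your overall strategy -- truncate each alphabet at subexponential thresholds to obtain a finite, subexponentially bounded non-stationary NCIFS $\Phi_N$ with $J_{\Phi_N}\sub J_\Phi$, apply Theorem \ref{thm: BF nonstat NCIFS} to it, and combine with Lemma \ref{lem:HDlessB} -- is the route the paper itself intends: the paper gives no proof but points to Corollary 8.3 of \cite{rempe-gillen_non-autonomous_2016}, whose argument produces a finite subsystem that is subexponentially bounded and weakly balanced with pressure arbitrarily close to that of the full system, and remarks that the stationary proof carries over with only notational changes. Your treatment of the case $Z_{(n)}(t)=\infty$ is correct: there condition (3) lets the truncated one-level sums exceed any fixed $M>K^{2t}$, so the distortion loss per letter is beaten outright and $\ul{P}^{\Phi_N}(t)>0$ follows from your product estimate.

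The gap is in the case $Z_{(n)}(t)<\infty$, and the way you propose to close it cannot work. Writing $\theta_j:=Z_{(j)}^{\Phi_N}(t)/Z_{(j)}(t)\leq 1$, the only chain available from single-letter truncation is $Z_n^{\Phi_N}(t)\geq K^{-2(n-1)t}\prod_j Z_{(j)}^{\Phi_N}(t)=K^{-2(n-1)t}\bigl(\prod_j\theta_j\bigr)\prod_j Z_{(j)}(t)\geq K^{-2(n-1)t}\bigl(\prod_j\theta_j\bigr)Z_n(t)$. Since every $\theta_j\leq 1$, no rate of convergence $\theta_j\to 1$ can offset the factor $K^{-2(n-1)t}$; the best conclusion is $\ul{P}^{\Phi_N}(t)\geq\ul{P}(t)-2t\log K$, which need not be positive, so $B_{\Phi_N}\geq t$ does not follow. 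Moreover, condition (2) of class $\cM$ is a bare limit and supplies no rate at all, so the "fast enough convergence'' you ask for is not available even in principle. The missing idea is to pay the distortion constant once per long block rather than once per letter: fix a large $\ell$, treat words of length $\ell$ as single letters, and truncate at the block level. For fixed $\ell$, condition (2) (with thresholds $e^{o(n)}$ obtained by your diagonal extraction) guarantees that for all large block indices the retained words capture a definite fraction, say one half, of each block sum $Z_{j\ell+1,(j+1)\ell}(t)$, while the blocked subsystem remains subexponentially bounded; its pressure per unit time is then at least $\ul{P}(t)-(2t\log K+\log 2)/\ell$, and letting $\ell\to\infty$ gives finite subexponentially bounded subsystems with Bowen parameter arbitrarily close to $t$. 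This block (equivalently, weakly balanced subsystem) construction is the actual content of the Rempe-Gillen--Urba\'nski argument, and it mirrors the manoeuvres in Sections \ref{Sec: Relax Subexp} and \ref{Ascending} of the paper; deferring to it is legitimate, but the step you describe as remaining -- a ratio convergence rate compensating $K^{2(n-1)t}$ -- is not a step that can be carried out, so as written your case-two argument does not close.
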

Corollary 8.3 of \cite{rempe-gillen_non-autonomous_2016} shows that Bowen's formula holds for all stationary systems belonging to class $\cM$. This is accomplished by again showing that each infinite system in $\cM$ has a finite subsystem which is subexponentially bounded and weakly balanced whose pressure is arbitrarily close to that of the original system. However, as the above conditions are often quite difficult to check, the following definition was introduced. 
\begin{definition}\label{def evenly varying}
	An infinite NCIFS is said to be \textit{evenly varying} if there is a sequence $\seq[i]{\eta_i}$ of positive real numbers and a constant $c\geq 1$ such that 
	\begin{align*}
	\frac{\eta_i}{c}\leq\norm{D\phi_i^{(n)}}\leq c\cdot\eta_i.
	\end{align*}
\end{definition}
\begin{theorem}\label{thm: BF nonstat EV}
	If a NCIFS $\Phi$ is evenly varying, then it is in class $\cM$.
\end{theorem}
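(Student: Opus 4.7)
The plan is to verify the three conditions of Definition \ref{def: class M} directly from the two-sided comparison $c^{-1}\eta_i\leq \norm{D\phi_i^{(n)}}\leq c\eta_i$ supplied by the evenly varying hypothesis. Raising this to the power $t$ and summing over $i\in\NN$ gives, for every $n\in\NN$ and every $t\in(0,d)$,
\begin{align*}
c^{-t}\sum_{i\in\NN}\eta_i^t \;\leq\; Z_{(n)}(t) \;\leq\; c^t\sum_{i\in\NN}\eta_i^t,
\end{align*}
so the finiteness of $Z_{(n)}(t)$ does not depend on $n$; this is precisely condition (1). The same comparison, applied to any partial sum, yields
\begin{align*}
c^{-t}\sum_{k\leq N}\eta_k^t \;\leq\; \sum_{k\leq N}\norm{D\phi_k^{(n)}}^t \;\leq\; c^t\sum_{k\leq N}\eta_k^t
\end{align*}
uniformly in $n$, which is the key tool for the remaining two conditions.

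For condition (2), suppose $Z_{(1)}(t)<\infty$, which is equivalent to $S:=\sum_k \eta_k^t<\infty$. First I would observe that $Z_{(n)}(t)\geq c^{-t}S$ is uniformly bounded below, and then estimate the tail
\begin{align*}
Z_{(n)}(t)-\sum_{k\leq e^{\ep n}}\norm{D\phi_k^{(n)}}^t \;=\; \sum_{k>e^{\ep n}}\norm{D\phi_k^{(n)}}^t \;\leq\; c^t\sum_{k>e^{\ep n}}\eta_k^t.
\end{align*}
Since $S$ is a convergent series and $e^{\ep n}\to\infty$, the right-hand side tends to zero, so dividing by the uniform lower bound for $Z_{(n)}(t)$ gives
\begin{align*}
\limty{n}\frac{\sum_{k\leq e^{\ep n}}\norm{D\phi_k^{(n)}}^t}{Z_{(n)}(t)}=1.
\end{align*}

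For condition (3), suppose $Z_{(1)}(t)=\infty$, equivalently $\sum_k \eta_k^t=\infty$. Then using only the lower side of the evenly varying comparison,
\begin{align*}
\sum_{k\leq e^{\ep n}}\norm{D\phi_k^{(n)}}^t \;\geq\; c^{-t}\sum_{k\leq e^{\ep n}}\eta_k^t,
\end{align*}
and the right-hand side tends to infinity as $n\to\infty$ because the truncation threshold $e^{\ep n}$ grows without bound and the series diverges; this is condition (3). The three conditions of class $\cM$ are thus all immediate consequences of the comparison, so there is no real obstacle beyond carefully keeping track of the uniformity of the constants in $n$.
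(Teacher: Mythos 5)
Your argument is correct and complete: the two-sided comparison with the fixed series $\sum_i\eta_i^t$ makes each of the three conditions of class $\cM$ an immediate, $n$-uniform consequence, and your handling of (2) via the tail estimate together with the uniform lower bound $Z_{(n)}(t)\geq c^{-t}\sum_k\eta_k^t>0$ is exactly what is needed. Note that the paper itself does not write out a proof of this theorem; it defers to Proposition 8.5 of Rempe-Gillen and Urba\'nski, remarking that the stationary proof carries over with only notational changes, and your direct verification is essentially that same comparison argument, written out in full. A small virtue of your write-up is that it makes explicit why the statement is insensitive to the stationary versus non-stationary distinction: the evenly varying hypothesis and the class-$\cM$ conditions refer only to the norms $\norm{D\phi_k^{(n)}}$, never to the underlying spaces, so uniformity in $n$ of the constant $c$ is the only thing that matters.
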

Proposition 8.5 of \cite{rempe-gillen_non-autonomous_2016} shows that if $\Phi$ is evenly varying, then it belongs to class $\cM$, hence Bowen's formula must hold. 

Again, we point out that Theorems \ref{thm: BF nonstat NCIFS}, \ref{thm: BF nonstat class M}, and \ref{thm: BF nonstat EV} only apply for NCIFS and not a general NCGDMS. Additionally, the proof techniques cannot be applied to NCGDMS as the existence of a suitable subsystem is not as intuitive as in the case of NCIFS. However, in Section \ref{Ascending} we introduce the class of ascending systems, which may be finite or infinite, for which we require no conditions on the derivatives nor conditions on the size of the alphabets.

\section{Hausdorff Measures}\label{Sec: Haus meas}
In this short section we present a result concerning the Hausdorff measure of the limit set for uniformly finite balanced systems. First we shall prove the following lemma.
\begin{lemma}\label{lem:UpperBoundHDforHMeasures}
	Let $\Phi$ be a NCGDMS. For each $n\in\NN$ let $\scr{U}_n$ be a covering of the limit set of the system $\Psi_n$ defined by $\Psi_n^{(j)}:=\Phi^{(n+j-1)}$ on the same sequence of spaces $\cX^{(j)}_{\Psi_n}=\cX_\Phi^{(n+j-1)}$ with the same sequence of incidence matrices $A_{\Psi_n}^{(j)}=A_\Phi^{(n+j-1)}$. Denote its Hausdorff sum by 
	\begin{align*}
		S( \scr{U}_n,t):=\sum_{U\in \scr{U}_n}\diam^t(U).
	\end{align*}
	Suppose that $t\geq 0$ is such that
	\begin{align*}
		\liminf_{n\to\infty}Z_{n-1}(t)\cdot S(\scr{U}_n,t)<\infty.
	\end{align*}
	Then $\HD(J_\Phi)\leq t$.
\end{lemma}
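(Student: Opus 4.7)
The plan is to turn the cover $\scr{U}_n$ of the shifted limit set $J_{\Psi_n}$ into a cover of $J_\Phi$ by applying the initial maps $\phi_\om$ with $\om\in I^{n-1}$, and then to estimate the resulting $t$-dimensional Hausdorff sum. The key observation, which follows directly from Proposition \ref{coding map limit set } applied to both $\Phi$ and $\Psi_n$, is that every admissible infinite word $\om\in I^\infty$ factors as $\om|_{n-1}\cdot\sigma^{n-1}\om$, where $\om|_{n-1}\in I^{n-1}$ is admissible for $\Phi$ and $\sigma^{n-1}\om$ is admissible for $\Psi_n$. Since $\pi_\Phi(\om) = \phi_{\om|_{n-1}}\bigl(\pi_{\Psi_n}(\sigma^{n-1}\om)\bigr)$ and $\pi_{\Psi_n}(\sigma^{n-1}\om)\in X^{(n-1)}_{t(\om|_{n-1})}$, we obtain the decomposition
\begin{align*}
J_\Phi \;=\; \bigcup_{\om\in I^{n-1}} \phi_\om\bigl(J_{\Psi_n}\cap X^{(n-1)}_{t(\om)}\bigr).
\end{align*}

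Consequently the collection
\begin{align*}
\scr{V}_n := \bigl\{\phi_\om(U\cap X^{(n-1)}_{t(\om)}) : \om\in I^{n-1},\ U\in \scr{U}_n\bigr\}
\end{align*}
is a cover of $J_\Phi$. Applying \eqref{MV like ineq} and noting that $\diam(U\cap X^{(n-1)}_{t(\om)})\leq \diam(U)$, we estimate
\begin{align*}
S(\scr{V}_n,t) \;\leq\; \sum_{\om\in I^{n-1}}\sum_{U\in\scr{U}_n} C^t\,\norm{D\phi_\om}^t\diam^t(U) \;=\; C^t\cdot Z_{n-1}(t)\cdot S(\scr{U}_n,t).
\end{align*}
Moreover, each member of $\scr{V}_n$ has diameter bounded by $C\eta^{n-1}\ol{d}_{n-1}$ thanks to the Uniform Contraction Principle and the trivial bound $\diam(U\cap X^{(n-1)}_{t(\om)})\leq \ol{d}_{n-1}$. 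By the Diameter Condition, picking any $\ep\in(0,-\log\eta)$ we have $\ol{d}_{n-1}\leq e^{\ep(n-1)}$ for all sufficiently large $n$, so the maximal diameter of $\scr{V}_n$ is at most $C(\eta e^\ep)^{n-1}\to 0$.

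Fix any $\dl>0$. For all sufficiently large $n$, the cover $\scr{V}_n$ has mesh below $\dl$, so
\begin{align*}
H^t_\dl(J_\Phi) \;\leq\; S(\scr{V}_n,t) \;\leq\; C^t\cdot Z_{n-1}(t)\cdot S(\scr{U}_n,t).
\end{align*}
Taking the liminf in $n$ yields $H^t_\dl(J_\Phi)\leq C^t\liminf_{n\to\infty}Z_{n-1}(t)\cdot S(\scr{U}_n,t)<\infty$, and letting $\dl\to 0$ gives $H^t(J_\Phi)<\infty$, whence $\HD(J_\Phi)\leq t$. The only substantive step is the initial decomposition of $J_\Phi$ in terms of $J_{\Psi_n}$; once this is available, the rest of the argument is essentially the same bounded-distortion calculation used in the proof of Lemma \ref{lem:HDlessB}, but applied to the flexible cover $\scr{U}_n$ of the shifted system rather than to $\{\phi_\om(X^{(n)}_{t(\om)})\}_{\om\in I^n}$.
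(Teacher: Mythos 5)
Your proof is correct and follows essentially the same route as the paper's: you pull back the cover $\scr{U}_n$ of $J_{\Psi_n}$ by the maps $\phi_\om$ with $\om\in I^{n-1}$, use \eqref{MV like ineq} to bound the Hausdorff sum of the resulting cover by $C^t Z_{n-1}(t)S(\scr{U}_n,t)$, and conclude via the liminf hypothesis. You add two small refinements the paper leaves implicit — the explicit decomposition $J_\Phi=\bigcup_{\om\in I^{n-1}}\phi_\om\bigl(J_{\Psi_n}\cap X^{(n-1)}_{t(\om)}\bigr)$ (justified via Proposition \ref{coding map limit set }) and the explicit mesh estimate using the Diameter Condition — and your constant $C^t$ is actually the correct one where the paper writes $C$, but these are polish rather than a different argument.
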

\begin{proof}
First note that $J_{\Psi_n}\sub\bigsqcup_{v\in V_{n-1}}X_v^{(n-1)}$. Now consider the collection
\begin{align*}
	\scr{V}_n=\set{\phi_\om(U):\om\in I^{n-1}, U\in\scr{U}_n}.	
\end{align*}
Then $\scr{V}_n$ is a covering of $J_\Phi$ and the diameters of the elements of $\scr{V}_n$ go to zero as $n$ tends toward infinity by the uniform contraction principle. 
Now each element of $\scr{V}_n$ is such that 
\begin{align*}
	\diam(\phi_\om(U))\leq C\cdot\norm{D\phi_\om}\cdot\diam(U),
\end{align*}
Thus we have 
\begin{align*}
	S(\scr{V}_n,t)&\leq C\cdot\left(\sum_{\om\in I^{n-1}}\norm{D\phi_\om}^t\right)\cdot\left(\sum_{U\in\scr{U}_n}\diam^t(U)\right)\\
	&=C\cdot Z_{n-1}(t)\cdot S(\scr{U}_n,t).
\end{align*} 
As we have $\liminf_{n\to\infty}Z_{n-1}(t)\cdot S(\scr{U}_n,t)<\infty$, we see that $H_t(J_\Phi)<\infty$, which implies that $\HD(J_\Phi)\leq t$, as claimed. 
\end{proof}
Recall that the function $\kp(t)$ is given by 
\begin{align*}
\kp(t)=\liminf_{n\to\infty} \frac{1}{n}\log\frac{\widetilde{Z}_{n}(t)}{1+\log\max_{j\leq n+1}\set{\rho_j}+\sup_{k\geq 0}\log(\ol{d}_{n+k}/\ul{d}_n)}, 
\end{align*}.
\begin{theorem}
	Suppose $\Phi$ is a balanced, uniformly finite, and $p$-finitely primitive NCGDMS such that there is some constant $0<\dl<\infty$ such that 
	\begin{align*}
		\dl^{-1}\leq \ul{d}_n\leq\ol{d}_n\leq \dl
	\end{align*}
	for each $n\in\NN$, and let $h=\HD(J_\Phi)=B_\Phi$. Then the $h$-dimensional Hausdorff measure $H_h(J_\Phi)$ is infinite, finite, or zero depending on whether 
	\begin{align*}
		\liminf_{n\to\infty}Z_n(h)
	\end{align*}
	is infinite, finite, or zero respectively. 	
\end{theorem}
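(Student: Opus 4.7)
The plan is to sandwich $H_h(J_\Phi)$ between two positive multiples of $\liminf_nZ_n(h)$, from which the trichotomy follows at once: if constants $c_1,c_2>0$ depending only on the system data satisfy $c_1\liminf_nZ_n(h)\leq H_h(J_\Phi)\leq c_2\liminf_nZ_n(h)$, then $H_h(J_\Phi)$ is zero, positive finite, or infinite exactly when $\liminf_nZ_n(h)$ is. The upper bound is the easy half: take $\sU_n:=\set{Y_\om:\om\in I^n}$ as a cover of $J_\Phi$, whose maximum diameter tends to zero by Lemma \ref{lem: diam go to zero}. Inequality \eqref{MV like ineq} together with the bounded-diameter hypothesis gives $\sum_{\om\in I^n}\diam(Y_\om)^h\leq(C\dl)^hZ_n(h)$, and taking $\liminf$ yields $H_h(J_\Phi)\leq(C\dl)^h\liminf_nZ_n(h)$. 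This already disposes of the regime $\liminf_nZ_n(h)=0$ and shows finiteness of $H_h(J_\Phi)$ when $\liminf_nZ_n(h)$ is finite.

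For the matching lower bound, fix $N\geq 1$ and exploit the essentially disjoint decomposition
\begin{align*}
J_\Phi=\bigcup_{\om\in I^N}\phi_\om\bigl(J_{\Psi_{N+1}}\intersect X^{(N)}_{t(\om)}\bigr),
\end{align*}
where $\Psi_{N+1}$ is the shifted tail system of Lemma \ref{lem:UpperBoundHDforHMeasures}. The Open Set Condition makes the images disjoint up to $h$-negligible boundary overlaps, and the Bounded Distortion Property furnishes
\begin{align*}
H_h(J_\Phi)\geq K^{-h}\sum_{\om\in I^N}\norm{D\phi_\om}^h\cdot H_h\bigl(J_{\Psi_{N+1}}\intersect X^{(N)}_{t(\om)}\bigr).
\end{align*}
Hence it suffices to produce a constant $c_0>0$, \emph{independent of $N$ and $v$}, such that $H_h\bigl(J_{\Psi_m}\intersect X^{(m-1)}_v\bigr)\geq c_0$ for every $m\geq 2$ and every $v\in V_{m-1}$. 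Once in hand, $H_h(J_\Phi)\geq K^{-h}c_0Z_N(h)$ for every $N$, and taking $\liminf$ closes the sandwich with $c_1:=K^{-h}c_0$.

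The uniform constant $c_0$ is produced by feeding each vertex-restricted tail subsystem $\Psi_{m,v}$ (the admissible words of $\Psi_m$ whose initial vertex is $v$) into the mass-distribution machinery of Theorem \ref{thm:LBforHDJv}. The restrictive hypotheses of our theorem trigger drastic simplifications: uniform finiteness bounds $\ol{G}_n\leq M$, so the quantity $\limsup_n\tfrac{1}{n}\log\ol{G}_n$ appearing in Theorem \ref{thm:LBforHDJv} vanishes and consequently $\ep'=0$; the balanced condition bounds $\rho_n\leq C$; and the two-sided diameter bound both controls the denominator of $\kp(h)$ and makes $\widetilde Z_n^{\Psi_{m,v}}(h)$ comparable to $Z_n^{\Psi_{m,v}}(h)$ up to universal multiplicative constants. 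The remaining content---and the principal obstacle---is arranging the bookkeeping so that \emph{no} constant secretly acquires a dependence on $m$ or $v$. Two comparisons are crucial. First, $B_{\Psi_m}=h$ for every $m$, which follows from finite primitivity together with the multiplicative comparison $Z_{n+m-1}^{\Phi}(t)$ bounded above and below by universal constants times $Z_{m-1}^{\Phi}(t)\cdot Z_n^{\Psi_m}(t)$. Second, $Z_n^{\Psi_{m,v}}(h)$ is universally comparable to $Z_n^{\Psi_m}(h)$ uniformly in $v\in V_{m-1}$, since finite primitivity allows one to reach any initial vertex from any other in at most $p$ steps with derivative cost bounded below by the universal constant $Q$ of the primitivity definition. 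Combining these, $\widetilde Z_n^{\Psi_{m,v}}(h)$ stays above a positive constant depending only on the system data, so the density estimate in the proof of Theorem \ref{thm:LBforHDJv} yields a probability measure on $J_{\Psi_{m,v}}$ with universal Frostman bound $\const\cdot r^h$; the mass-distribution principle then delivers the desired $c_0$ and completes the proof.
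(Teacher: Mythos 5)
Your upper bound is exactly the paper's (the covering estimate \eqref{ineq: butter knife} with $\ol{d}_n\leq\dl$), but the lower half of your sandwich has a genuine gap at its pivotal step: the uniform constant $c_0$ with $H_h\bigl(J_{\Psi_m}\cap X_v^{(m-1)}\bigr)\geq c_0>0$ for all $m$ and $v$. The justification you give --- that $B_{\Psi_m}=h$ together with vertex-to-vertex comparability forces $\widetilde{Z}_n^{\Psi_{m,v}}(h)$ to stay above a constant depending only on the system data --- is a non sequitur: equality of Bowen parameters only locates where the lower pressure changes sign, i.e.\ it controls exponential growth rates, and says nothing about the actual size of the partition sums at the critical parameter $t=h$ (at $t=h$ these sums may even tend to $0$; that is precisely the ``zero measure'' regime of the very theorem being proved). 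Quantitatively, finite primitivity and bounded distortion give $Z_n^{\Psi_m}(h)\asymp Z_{n+m-1}(h)/Z_{m-1}(h)$ up to constants involving $K,Q,p$, so $\liminf_{n\to\infty}Z_n^{\Psi_m}(h)\asymp\liminf_{n\to\infty}Z_n(h)/Z_{m-1}(h)$. Nothing in the hypotheses prevents $Z_m(h)$ from having an unbounded subsequence while $\liminf_{n\to\infty}Z_n(h)$ is finite (or zero); along such $m$ the tail sums at level $h$ are arbitrarily small, and your own upper-bound argument applied to $\Psi_m$ (which satisfies the same hypotheses) yields $H_h(J_{\Psi_m})\leq(C\dl)^h\liminf_{n\to\infty}Z_n^{\Psi_m}(h)\to 0$. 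Hence no uniform $c_0$ exists in general, and the claimed inequality $H_h(J_\Phi)\geq c_1\liminf_{n\to\infty}Z_n(h)$ is not established by this route. In the one case where your decomposition would actually be needed for the trichotomy, namely $\liminf_{n\to\infty}Z_n(h)=\infty$, every tail does have infinite $h$-measure, but proving that is exactly the mass-distribution estimate of Theorem \ref{thm:LBforHDJv} applied to a system with divergent critical sums, at which point the level-$N$ cylinder decomposition buys nothing.

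By contrast, the paper's proof runs the Frostman estimate from the proof of Theorem \ref{thm:LBforHDJv} directly on $\Phi$: since the system is balanced, uniformly finite, and has two-sided diameter bounds, the quantities $\rho_n$, $\ol{G}_n$, $\#I^{(n)}$ and $\ol{d}_n/\ul{d}_n$ are uniformly bounded, so the correction factors in that proof become genuine constants, the measure bound reads $m(B)\leq\const\cdot r^h/\widetilde{Z}(h)$ with $\widetilde{Z}_n(h)$ comparable to $Z_n(h)$, and the mass distribution principle gives positivity or infiniteness of $H_h(J_\Phi)$ according to $\liminf_{n\to\infty}Z_n(h)$, while the covering bound (which you reproduce) settles the zero and finite cases. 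Two smaller points: your assertion that uniform finiteness makes $\ep'=0$ is not literally what the paper's proof provides ($\ep'=\dl'(p^2+p+1)$ with $\dl'$ strictly positive); in the uniformly finite case one should redo those estimates with $\#I^{(n)},\ol{G}_n\leq M$ to replace the factors $e^{\ep' n}$ by constants --- repairable, unlike the $c_0$ issue. Likewise the ``$h$-negligible boundary overlaps'' in your decomposition deserve an argument (work inside the images of the interiors), though this too is fixable.
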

\begin{proof}
Taking \eqref{ineq: butter knife} from the proof of Lemma \ref{lem:HDlessB} within the context of our current system, we see that
\begin{align*}
	H_h(J(\Phi))\leq \liminf_{n\to\infty}(C\cdot\ol{d}_n)^hZ_{n}(h)\leq (C\cdot\dl)^h\liminfty{n} Z_n(h).
\end{align*}
Thus the case in which the liminf is equal to zero is clear, leaving us to prove the claim in the case where the liminf is finite or infinite.  

Now from the proof of Theorem \ref{thm:LBforHDJv}, for any ball $B$ of radius $r>0$ we have 
\begin{align*}
	m(B)\leq\const\cdot r^h
\end{align*}
under the assumption that 
\begin{align*}
	0\leq (p^2+p+1)\cdot\limsup_{n\to\infty}\frac{1}{n}\log\ol{G}_n<\kp(h).
\end{align*} 
The mass distribution principle then implies that $H_h(J(\Phi))$ is positive or infinite if $\kp(h)$ is positive or infinite respectively. 

Since the system is balanced we have that $\rho_n$ is uniformly bounded, and since the system is uniformly finite we have that both $\ol{G}_n$ and $\#I^{(n)}$ are both uniformly bounded. Furthermore, by assumption we have that the diameters of the spaces $X_v^{(n)}$ are uniformly bounded away from 0 and $\infty$. Thus, we have that 
\begin{align*}
	\widetilde{Z}_n(h)&=Z_n(h)\cdot\left(\ul{G}_{n+1}\right)^{\frac{h}{d}}\cdot\ul{c}_{n+1}^{h}\cdot\ul{d}_{n+1}^{h}\leq \const\cdot Z_n(h).
\end{align*} 
 Hence, $\kp(h)$ is comparable to $\liminfty{n} Z_n(h)$ which proves the claim. 
\end{proof}

\section{Relaxing Subexponential Boundedness and Finite Primitivity}\label{Sec: Relax Subexp}

In this section we show that Bowen's formula holds for two classes of NCGDMS for which we are able to relax or remove either of the hypotheses of finite primitivity and subexponential boundedness. 

\begin{theorem}\label{thm: BF for g bdd}
	Let $S$ be a finite NCGDMS that is $p$-finitely primitive. Additionally we assume that there is some constant $M>0$ such that for each $n\in\NN$ we have 
	\begin{align*}
		Z_{(n)}(t)=\sum_{i\in I^{(n)}}\norm{D\phi_i}^t\leq M.
	\end{align*}
	We also suppose that there is a function $g:\NN\to\NN$ such that the following hold for each $\ell\in\NN$:
	\begin{enumerate}
		\item $\#I^{(k(\ell+p)+1)}\leq g(\ell)$ for each $k\in\NN$,
		\item $\#I^{(k(\ell+p)-p)}\leq g(\ell)$ for each $k\in\NN$,
		\item $\lim_{\ell\to\infty}\frac{1}{\ell}\log g(\ell)=0$.
	\end{enumerate}
	Then Bowen's formula holds.
\end{theorem}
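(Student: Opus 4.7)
Setting $\ell=n$ and $k=1$ in hypothesis~(2) gives $\#I^{(n)}\leq g(n)$ for every $n\in\NN$, so by hypothesis~(3) the system $S$ is automatically subexponentially bounded. Thus Theorem~\ref{MainThm} would apply directly if $S$ were weakly balanced, and the content of the present theorem is to trade weak balance for the uniform partition bound $Z_{(n)}(t)\leq M$ together with the refined $g$-structure at the specific positions $k(\ell+p)+1$ and $k(\ell+p)-p$.

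To implement this trade, I would construct, for each $t<B_S$ and each sufficiently large $\ell\in\NN$, a subsystem $\hat{S}^{(\ell)}$ of $S$ satisfying the hypotheses of Theorem~\ref{MainThm} and whose Bowen dimension approximates $B_S$. Group time steps into super-blocks $B_K:=[(K-1)(\ell+p)+1,K(\ell+p)]$ of length $\ell+p$, indexed by super-time $K$. By hypotheses~(1) and~(2), the alphabets at positions $(K-1)(\ell+p)+1$ and $K(\ell+p)-p$ each have size at most $g(\ell)$ for $K\geq 2$. At each super-time $K$, define the super-alphabet $\hat{I}^{(K)}$ by a dyadic pigeonhole on $\norm{D\phi_\cdot}$ applied to the admissible words of length $\ell+p$ in $B_K$: since $Z_{(n)}(t)\leq M$ gives a total super-block partition weight of at most $\const\cdot M^{\ell+p}$, and the number of relevant dyadic classes is polynomial in $\ell$, the heaviest class carries weight at least a $1/O(\ell)$ fraction of the total.

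The subsystem $\hat{S}^{(\ell)}$ then enjoys three key properties: (a)~subexponential boundedness in super-time, since $\#\hat{I}^{(K)}$ is uniformly bounded in $K$ by a function depending only on $\ell$; (b)~weak balance, since super-letters within each $\hat{I}^{(K)}$ have derivatives comparable up to a factor of~$2$; and (c)~finite primitivity, inherited from the $p$-finite primitivity of $S$ after a possibly necessary, subexponential augmentation of the super-alphabet by the canonical bridging words from $\Lambda_n$. Applying Theorem~\ref{MainThm} therefore yields $\HD(J_{\hat{S}^{(\ell)}})=B_{\hat{S}^{(\ell)}}$. Using the pigeonhole estimate together with bounded distortion and $p$-finite primitivity of $S$, one obtains $\hat{Z}_K(t)\geq\const\cdot Z_{K(\ell+p)}(t)/[O(\ell)]^K$, so that $\hat{\ul{P}}(t)\geq(\ell+p)\ul{P}(t)-\log O(\ell)$. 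For $t<B_S$ we have $\ul{P}(t)>0$; hence, for $\ell$ sufficiently large, $\hat{\ul{P}}(t)>0$, giving $t\leq B_{\hat{S}^{(\ell)}}=\HD(J_{\hat{S}^{(\ell)}})\leq\HD(J_S)$, the last inequality because $J_{\hat{S}^{(\ell)}}\subseteq J_S$. Letting $t\nearrow B_S$ and combining with Lemma~\ref{lem:HDlessB} completes the proof.

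The main obstacle is to simultaneously arrange properties~(a)--(c) with a subsystem whose pressure at $t$ is comparable to that of $S$. The dyadic restriction at each super-time threatens $p$-finite primitivity because a connector $\lambda\in\Lambda_n$ can pass through intermediate letters outside the chosen dyadic class. Repairing this by adjoining the finitely many needed connectors is itself a subexponential adjustment, since each connector involves only letters with $\norm{D\phi_\lambda}\geq Q$ (part~(2) of the finite primitivity definition), controlling both their count and their derivative range. Verifying that this augmentation preserves~(a)--(c) while keeping the pressure loss subexponential in $\ell$ constitutes the principal technical work.
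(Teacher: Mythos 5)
Your opening observation is correct and actually shores up a point the paper glosses over: hypothesis (2) with $k=1$ already gives $\#I^{(\ell)}\leq g(\ell)$ for every $\ell$, so $S$ is subexponentially bounded. The first genuine gap comes in your pigeonhole step. You claim that within each super-block of length $\ell+p$ the number of relevant dyadic derivative classes is polynomial in $\ell$, so the heaviest class carries a $1/O(\ell)$ fraction of the block weight \emph{uniformly in the super-time $K$}. The bound $Z_{(n)}(t)\leq M$ only caps the total block weight; it gives no lower bound on derivative norms of block words and no uniform bound on how many words a block contains, so the number of classes needed to capture a definite fraction of the weight grows with $K$ (roughly like the logarithm of the number of words in block $K$, hence linearly in $K$ for fixed $\ell$). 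The cumulative loss over $K$ super-steps is then of order $\prod_{j\leq K}O(j)$, whose logarithm grows like $K\log K$, and your key estimate $\hat{\ul{P}}(t)\geq(\ell+p)\ul{P}(t)-\log O(\ell)$ collapses. (This particular defect is repairable, as in Rempe-Gillen--Urba\'nski, by using classes whose width grows subexponentially in $K$, but that is not what you wrote, and it is not the route the paper takes.)

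The more serious gap is the one you flag and defer as ``the principal technical work'': after the dyadic restriction the super-system is still graph directed, and Theorem \ref{MainThm} requires it to be finitely primitive. Adjoining bridging words does not obviously work: a super-connector must be a word of full block length, and chaining connectors from $\Lambda$ forces passage through intermediate single letters whose derivatives admit no lower bound uniform in $K$, so condition (2) of finite primitivity (the uniform constant $Q$) is not available at the super-level; moreover your blocks reserve no slots for connectors at all. This admissibility problem is precisely what hypotheses (1) and (2) are designed for, and your argument never uses them beyond extracting subexponential boundedness. The paper pigeonholes not on derivative size but on the pair of endpoint letters of each block: there are at most $g(\ell)^2$ such pairs (this is why the positions $k(\ell+p)+1$ and $k(\ell+p)-p$ appear), one fixes the maximizing pair $(a^*_n,b^*_n)$, reserves the final $p$ slots of each block for a connector $\lambda^{(n)}\in\Lambda$ whose cost is controlled by $Q$ and by $Z_{(n)}(t)\leq M$ (a factor $K^2M^p/Q^t$ per block), and because the endpoints are pinned the resulting subsystem $S_\ell$ is an honest NCIFS to which Theorem \ref{thm: BF nonstat NCIFS} applies; the per-block pressure loss $\frac{1}{\ell+p}\bigl(2\log g(\ell)+\log(K^2M^p/Q^t)\bigr)$ tends to $0$ as $\ell\to\infty$, and $J_{S_\ell}\subseteq J_S$ finishes the proof. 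Until you either pin the endpoints in this way or otherwise establish finite primitivity and a uniform-in-$K$ pressure comparison for your super-system, your argument does not go through.
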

\begin{remark}\label{rem: thm g bdd applies for subexp}
	Although the above conditions concerning derivatives may seem restrictive, it should be noted that this condition is met by all uniformly bounded alphabets, which means that this theorem generalizes all finite autonomous IFS and GDMS. Moreover, clearly we have that if $\Phi$ is subexponentially bounded then such a function $g$ exists.
\end{remark}

\begin{proof}

Let $\ell\in\NN$ and for any $m\in\NN$, $a\in I^{(m)}$, and $b\in I^{(m+\ell)}$ we define the partition function 
\begin{align*}
	Z_{m,m+\ell}(t;(a,b)):=\sum_{\mathclap{\substack{\om\in I^{m,m+\ell}\\ \om|_m=a\text{, }\om|_{m+\ell}=b}}}\norm{D\phi_\om}^t.
\end{align*}
Then clearly, we have 
\begin{align*}
	Z_{m,m+\ell}(t)=\sum_{\mathclap{\substack{a\in I^{(m)}\\ b\in I^{(m+\ell)}}}}Z_{m,m+\ell}(t;(a,b)).
\end{align*}

For each $\ell>1$ define the following alphabets
\begin{align*}
	B^{(1)}=B_\ell^{(1)}&=:I^{(1)}\times I^{(\ell)},\\
	B^{(2)}=B_\ell^{(2)}&=:I^{(\ell+p+1)}\times I^{(2\ell+p)},\\
	B^{(3)}=B_\ell^{(3)}&=:I^{(2\ell+2p+1)}\times I^{(3\ell+2p)},\\
	B^{(n)}=B_\ell^{(n)}&=:I^{([n-1]\ell+[n-1]p+1)}\times I^{(n\ell+[n-1]p)}.			
\end{align*} 

For $1\leq n\leq \infty$ we define $B^n=\prod_{j=1}^n B^{(j)}$. For finite $n$ we denote an element of $B^n$ by the pair $(a,b):=((a_1,b_1),(a_2,b_2),\dots,(a_n,b_n))$. We use similar notation to denote an element of $B^\infty$.
For each $n\in\NN$ we let $(a^*_n,b^*_n)\in B^{(n)}$ be the pair which maximizes the partition function $Z_{n,n+\ell}(t;(a,b))$. Notice that by finite primitivity for each $n\in\NN$ there is some $\lm^{(n)}\in\Lambda_{n\ell+(n-1)p}$ such that $b^*_n\lm^{(n)}a^*_{n+1}$ is admissible. 

Then for each $\ell\in\NN$ we can form a NCIFS
\begin{align*}
	S_\ell=\set{\phi_e^{(n)}: X_{t(e)}^{n(\ell+p)}\to X_{i(e)}^{(n-1)(\ell+p)+1}:n\in\NN, e\in E_\ell^{(n)}},
\end{align*} where the alphabets $E_\ell^{(j)}$ are given by 
\begin{align*}
	E_\ell^{(1)}&=\set{\gm\in I^{\ell+p}:\gm=\om\lm^{(1)},\om_1=a^*_1, \om_\ell=b^*_1},\\
	E_\ell^{(2)}&=\set{\gm\in I^{\ell+p+1,2(\ell+p)}: \gm=\om\lm^{(2)},\om_{\ell+p+1}=a^*_2, \om_{2\ell+p}=b^*_2},\\
	E_\ell^{(3)}&=\set{\gm\in I^{2(\ell+p)+1,3(\ell+p)}: \gm=\om\lm^{(3)},\om_{2(\ell+p)+1}=a^*_3, \om_{3(\ell+p)}=b^*_3},\\
	E_\ell^{(n)}&=\set{\gm\in I^{(n-1)(\ell+p)+1,n(\ell+p)}: \gm=\om\lm^{(n)},\om_{(n-1)(\ell+p)+1}=a^*_n, \om_{n(\ell+p)}=b^*_n}.
\end{align*}
In fact, $S_\ell$ is a subexponentially bounded NCIFS. To see this we note that since $S$ is subexponentially bounded we have 
\begin{align*}
	\#E_\ell^{(n)}&\leq \#I^{(n-1)(\ell+p)+1,n(\ell+p)}
	\leq \#I^{([n-1][\ell+p]+1)}\cdots\#I^{(n[\ell+p])}\\
	&\leq e^{\ep[(n-1)(\ell+p)+1]}\cdots e^{\ep[n[\ell+p]]}\\
	&=e^{\frac{\ep}{2}[(\ell+p)^2(2n-1)+\ell+p]}.
\end{align*}
Thus we have that 
\begin{align*}
	\frac{1}{n}\log\#E^{(n)}&\leq \frac{\ep((\ell+p)^2(2n-1)+\ell+p)}{2n}.
\end{align*}
Hence we see that for fixed $\ell$ we have
\begin{align*}
	\lim_{n\to\infty}\frac{1}{n}\log\#E^{(n)}\leq \ep(\ell+p)^2
\end{align*}
for every $\ep>0$. So, letting $\ep\to 0$ gives that $S_\ell$ is subexponentially bounded.

Now, for ease of notation, we define the following additional alphabets for each $\ell>1$ and $n>0$: 
\begin{align*}
	\Omega^{(1)}=\Omega_\ell^{(1)}&:=I^\ell, 				&	T^{(1)}=T_\ell^{(1)}&:=I^{\ell+1,\ell+p},			\\
	\Omega^{(2)}=\Omega_\ell^{(2)}&:=I^{\ell+p+1,2\ell+p},	&	T^{(2)}=T_\ell^{(2)}&:=I^{2\ell+p+1,2\ell+2p},		\\
	\Omega^{(3)}=\Omega_\ell^{(3)}&:=I^{2\ell+2p+1,3\ell+2p},&	T^{(3)}=T_\ell^{(3)}&:=I^{3\ell+2p+1,3\ell+3p},		\\
	\Omega^{(n)}=\Omega_\ell^{(n)}&:=I^{(n-1)\ell+(n-1)p+1,n\ell+(n-1)p},&	T^{(n)}=T_\ell^{(n)}&:=I^{n\ell+(n-1)p+1,n\ell+np}	.
\end{align*}
Then words in $\Om^{(j)}$ are of length $\ell$ and words in $T^{(j)}$ are of length $p$ for each $j$.
Using this notation we get 
\begin{align*}
	\sum_{\om\in \Omega^{(n)}}\norm{D\phi_\om}^t&=\sum_{(a,b)\in B^{(n)}}
	\,\,
	\sum_{\mathclap{\substack{\om\in\Omega^{(n)}\\ \om|_1=a\text{, }\om|_\ell=b}}}\norm{D\phi_\om}^t
	\leq \#B^{(n)}
	\cdot\sum_{\mathclap{\substack{\om\in\Omega^{(n)}\\ \om|_1=a^*_n\text{, }\om|_\ell=b^*_n}}}\norm{D\phi_\om}^t.
\end{align*}
And thus we can estimate the partition function 
\begin{align*}
	Z_{n(\ell+p)}(t)&\leq 
	\left(\sum_{\om\in \Omega^{(1)}}\norm{D\phi_\om}^t\right)\cdot\left(\sum_{\ut\in T^{(1)}}\norm{D\phi_\ut}^t\right)
	\dots\left(\sum_{\om\in \Omega^{(n)}}\norm{D\phi_\om}^t\right)\cdot
	\left(\sum_{\ut\in T^{(n)}}\norm{D\phi_\ut}^t\right)\\
	&\leq \prod_{j=1}^n\#B^{(j)}\sum_{\mathclap{\substack{\om\in\Omega^{(1)}\\ \om|_1=a^*_1\text{, }\om|_\ell=b^*_1}}}\norm{D\phi_\om}^t
	\left(\sum_{\ut\in T^{(1)}}\norm{D\phi_\ut}^t\right)\cdots
	\sum_{\mathclap{\substack{\om\in\Omega^{(n)}\\ \om|_1=a^*_n\text{, }\om|_\ell=b^*_n}}}\norm{D\phi_\om}^t
	\left(\sum_{\ut\in T^{(n)}}\norm{D\phi_\ut}^t\right)\\
	&\leq M^{np}\prod_{j=1}^n\#B^{(j)}\sum_{\mathclap{\substack{\om\in\Omega^{(1)}\\ \om|_1=a^*_1\text{, }\om|_\ell=b^*_1}}}\norm{D\phi_\om}^t\cdots
	\sum_{\mathclap{\substack{\om\in\Omega^{(n)}\\ \om|_1=a^*_n\text{, }\om|_\ell=b^*_n}}}\norm{D\phi_\om}^t\\
	&\leq \left(\frac{M^p}{Q^t}\right)^n\prod_{j=1}^n\#B^{(j)}\sum_{\mathclap{\substack{\om\in\Omega^{(1)}\\ \om|_1=a^*_1\text{, }\om|_\ell=b^*_1}}}\norm{D\phi_\om}^t\cdot\norm{D\phi_{\lm^{(1)}}}^t\cdots
	\sum_{\mathclap{\substack{\om\in\Omega^{(n)}\\ \om|_1=a^*_n\text{, }\om|_\ell=b^*_n}}}\norm{D\phi_\om}^t\cdot\norm{D\phi_{\lm^{(n)}}}^t\\
	&\leq\left(\frac{K^2M^p}{Q^t}\right)^n\prod_{j=1}^n\#B^{(j)}Z^{S_\ell}_n(t).
\end{align*}
Due to the properties of the bounding function $g(\ell)$ we can estimate the product in the last line above as
\begin{align*}	
	\prod_{j=1}^n\#B^{(j)}=\prod_{j=1}^n \#I^{([j-1]\ell+[j-1]p+1)}\cdot\# I^{(j\ell+[j-1]p)}
	\leq\prod_{j=1}^{n} g^2(\ell)
	=g^{2n}(\ell).
\end{align*}
Thus we see that 
\begin{align*}
	\frac{1}{n(\ell+p)}\log\prod_{j=1}^n\#B^{(j)}\leq \frac{1}{n(\ell+p)}\log g^{2n}(\ell)=\frac{2\log g(\ell)}{\ell+p}.
\end{align*}
Now note that we have 
\begin{align*}
	Z_{n(\ell+p)}(t)\geq Z_n^{S_\ell}(t),
\end{align*}
which implies that 
\begin{align*}
	\ul{P}(t)\geq\frac{1}{\ell+p}\ul{P}^{S_\ell}.
\end{align*}
Combining the above inequalities we are able to obtain the following estimate for the pressure: 
\begin{align*}
	\frac{1}{\ell+p}\ul{P}^{S_\ell}\leq\ul{P}(t)\leq \frac{1}{\ell+p}\log\left(\frac{K^2M^p}{Q^t}\right)+\frac{2\log g(\ell)}{\ell+p}+\frac{1}{\ell+p}\ul{P}^{S_\ell}(t).
\end{align*}
Clearly $B_{S_\ell}\leq B_S$ from the inequality on the left. Now if we let $t<B_S$, which implies that $\ul{P}(t)>0$. Thus for sufficiently large $\ell$, we must have that $\ul{P}^{S_\ell}(t)>0$, in fact, for fixed $t$, $\ul{P}^{S_\ell}(t)\to\infty$ as $\ell\to\infty$. Hence $t<B_{S_\ell}\leq B_S$, and so we have 
\begin{align*}
	\limit{\ell}{\infty}B_{S_\ell}=B_S.
\end{align*} 
As $S_\ell$ is a subexponentially bounded NCIFS, Theorem \ref{thm: BF nonstat NCIFS} gives that Bowen's formula holds, and thus, $\HD(J_{S_\ell})=B_{S_\ell}$. But since $J_{S_\ell}\sub J_S$ we have that $\HD(J_S)\geq\HD(J_{S_\ell})$. Thus we have that $\HD(J_S)\geq \HD(J_{S_\ell})=B_{S_\ell}$ for all $\ell$, and hence we have $\HD(J_S)\geq B_S$.

In light of the fact that we always have $\HD(J_S)\leq B_S$, we see that Bowen's formula holds for the system $S$, which finishes the proof.   
\end{proof}
\begin{remark}
	It is worth mentioning that we may weaken the hypothesis on $Z_{(n)}(t)$ in the previous theorem. In fact it suffices to know that there is a sequence $\seq[j]{M_j}$ of positive integers with
	\begin{align*}
		Z_{j+1,j+p}(t)\leq M_j
	\end{align*}
	such that
	\begin{align*}
		\limty{n}\frac{\sum_{k=1}^n\log M_{kj+(k-1)p}}{n}=0
	\end{align*}
	for each $j\in\NN$.
\end{remark}
The following corollary follows immediately from Theorem \ref{thm: BF for g bdd} in view of Remark \ref{rem: thm g bdd applies for subexp}.
\begin{corollary}
	Suppose $S$ is a finite NCGDMS that is $p$-finitely primitive and subexponentially bounded such that there is some constant $M>0$ such that for each $n\in\NN$ we have 
	\begin{align*}
	Z_{(n)}(t)=\sum_{i\in I^{(n)}}\norm{D\phi_i}^t\leq M.
	\end{align*}
	Then Bowen's formula holds.
\end{corollary}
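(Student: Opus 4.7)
The plan is to apply Theorem \ref{thm: BF for g bdd} directly to $S$. The hypotheses of the corollary---that $S$ is finite, $p$-finitely primitive, and satisfies $Z_{(n)}(t)\leq M$ for all $n\in\NN$---already include the derivative-sum hypothesis of Theorem \ref{thm: BF for g bdd} verbatim. Consequently all that remains is to produce a function $g\colon\NN\to\NN$ satisfying conditions (1)--(3) in the hypothesis of Theorem \ref{thm: BF for g bdd}.

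This is precisely the content of Remark \ref{rem: thm g bdd applies for subexp}. Since $S$ is subexponentially bounded, namely
\begin{align*}
\limty{n}\frac{1}{n}\log\#I^{(n)}=0,
\end{align*}
for every $\ep>0$ there is $N=N(\ep)$ with $\#I^{(n)}\leq e^{\ep n}$ for all $n\geq N$. Using the finiteness of each individual alphabet $I^{(n)}$ to absorb the finitely many initial indices, one can then select $g(\ell)$ to dominate the cardinalities $\#I^{(k(\ell+p)+1)}$ and $\#I^{(k(\ell+p)-p)}$ appearing in conditions (1) and (2), while retaining $\limty{\ell}\frac{1}{\ell}\log g(\ell)=0$ as required by condition (3).

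With such a $g$ in hand, Theorem \ref{thm: BF for g bdd} yields $\HD(J_S)=B_S$ without further work. There is no substantive obstacle: the corollary is really a repackaging of Theorem \ref{thm: BF for g bdd} under the stronger but more transparent hypothesis of subexponential boundedness, and the proof reduces to the bookkeeping already indicated in Remark \ref{rem: thm g bdd applies for subexp}.
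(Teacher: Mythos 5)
Your proposal is correct and matches the paper's own argument: the paper derives this corollary immediately from Theorem \ref{thm: BF for g bdd} together with Remark \ref{rem: thm g bdd applies for subexp}, which is exactly the route you take. The small amount of detail you add about constructing $g$ from the subexponential bound is just unwinding the ``clearly'' in the remark.
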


In the following theorem we are able to remove the balancing conditions from Section \ref{sec: balancing} by again appealing to Theorem \ref{thm: BF nonstat NCIFS}. We will accomplish this using a NCGDMS which is ``pinched" to a single vertex sufficiently often as to allow us to reimagine the given NCGDMS as a subexponentially bounded non-stationary NCIFS. It is precisely this pinching condition that allows us to remove the requirement of finite primitivity as we will consider the words in between the pinched points to be the letters of the generated NCIFS. 
\begin{theorem}
	Suppose that $\Phi$ is a finite subexponentially bounded NCGDMS and that there is a strictly increasing sequence $\set{\ell_j}_{j\geq 1}$ of natural numbers such that the following hold:
	\begin{enumerate}
		\item There is some constant $M>0$ such that
		\begin{align*}
		\limsup_{n\to\infty}\frac{\ell_n^2-\ell_{n-1}^2}{n}\leq M.
		\end{align*}
		\item For each $j\in\NN$ the set $V_{\ell_j}$ is a singleton and the incidence matrix $A^{(\ell_j)}_{ab}=1$ for all $a\in I^{(\ell_j)}$ and $b\in I^{(\ell_j+1)}$, in other words, that the matrices $A^{(\ell_j)}$ consists of all 1's.
	\end{enumerate} Then Bowen's formula holds.
\end{theorem}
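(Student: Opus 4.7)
My plan is to recode $\Phi$ as a non-stationary NCIFS $\hat{\Phi}$ whose alphabet at time $n$ consists of the admissible blocks of $\Phi$ spanning the time interval $(\ell_{n-1},\ell_n]$, and then invoke Theorem~\ref{thm: BF nonstat NCIFS}. Setting $\ell_0:=0$, I would take $\hat{I}^{(n)}:=I^{\ell_{n-1}+1,\ell_n}$ with maps $\hat{\phi}_\om^{(n)}:=\phi_\om^{\ell_{n-1}+1,\ell_n}$. Hypothesis~(2) is precisely what makes this work: because $V_{\ell_n}=\{v_n\}$ is a singleton, every $\om\in\hat{I}^{(n)}$ satisfies $i(\om)=v_{n-1}$ and $t(\om)=v_n$, so $\hat{\phi}_\om^{(n)}$ maps the single space $\hat{X}^{(n)}:=X_{v_n}^{(\ell_n)}$ into $\hat{X}^{(n-1)}$; and because $A^{(\ell_n)}$ consists entirely of ones, every letter in $\hat{I}^{(n+1)}$ is admissible after every letter in $\hat{I}^{(n)}$. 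Thus $\hat{\Phi}$ has singleton vertex sets and full incidence matrices at every time, i.e.\ it is a non-stationary NCIFS.

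The main obstacle, and the place where hypothesis~(1) enters essentially, is verifying that $\hat{\Phi}$ is subexponentially bounded. Given $\ep>0$, the subexponential boundedness of $\Phi$ gives $\#I^{(k)}\leq e^{\ep k}$ for all large $k$, so I would estimate
\begin{align*}
\#\hat{I}^{(n)}\leq\prod_{k=\ell_{n-1}+1}^{\ell_n}\#I^{(k)}\leq \exp\!\left(\tfrac{\ep}{2}\bigl(\ell_n^2-\ell_{n-1}^2+\ell_n-\ell_{n-1}\bigr)\right).
\end{align*}
Since $\ell_n-\ell_{n-1}\leq \ell_n^2-\ell_{n-1}^2$ for $\ell_{n-1}\geq 1$, hypothesis~(1) yields $\limsupty{n}\frac{1}{n}\log\#\hat{I}^{(n)}\leq \ep M$, and letting $\ep\downarrow 0$ gives subexponential boundedness of $\hat{\Phi}$. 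Hypothesis~(1) also forces $\ell_n=O(n)$ and, by the same telescoping argument combined with $\ell_{k+n}+\ell_k\geq 2k$, shows that $\sup_k(\ell_{k+n}-\ell_k)/n$ is bounded for large $n$. This is exactly what is needed to transfer the diameter condition from $\Phi$ to $\hat{\Phi}$: with the single diameter $\hat d_n:=\diam(X_{v_n}^{(\ell_n)})$, one has $\frac{1}{n}\log(\hat d_{k+n}/\hat d_k)\leq \frac{1}{n}\log(\ol{d}_{\ell_{k+n}}/\ul{d}_{\ell_k})$, which tends to $0$ uniformly in $k$ because $n\leq \ell_{k+n}-\ell_k=O(n)$. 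The remaining NCGDMS axioms (open set condition, conformality, uniform contraction, bounded distortion, geometry and cone) pass to $\hat{\Phi}$ routinely from $\Phi$ by composition.

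To conclude, I would compare the two systems. The pinching forces every admissible word of length $\ell_n$ in $\Phi$ to factor uniquely as a concatenation of blocks from $\hat{I}^{(1)},\dots,\hat{I}^{(n)}$, and conversely every such concatenation is admissible since the matrices $A^{(\ell_j)}$ are all ones at the pinch times. This derivative-preserving bijection gives $J_{\hat{\Phi}}=J_\Phi$ and $Z_n^{\hat{\Phi}}(t)=Z_{\ell_n}^\Phi(t)$. For any $t<B_\Phi$ one has $\ul{P}^\Phi(t)>0$, and since $\ell_n\geq n$ this forces $\ul{P}^{\hat{\Phi}}(t)>0$, so $B_{\hat{\Phi}}\geq B_\Phi$. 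Applying Theorem~\ref{thm: BF nonstat NCIFS} to $\hat{\Phi}$ now yields $\HD(J_\Phi)=\HD(J_{\hat{\Phi}})=B_{\hat{\Phi}}\geq B_\Phi$, and Lemma~\ref{lem:HDlessB} supplies the matching upper bound, establishing Bowen's formula.
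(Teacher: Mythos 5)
Your proof follows the same approach as the paper's: recode $\Phi$ by blocking along the pinch times $\ell_n$ into a non-stationary NCIFS $\hat{\Phi}$, verify subexponential boundedness of $\hat{\Phi}$ via the same telescoping estimate using hypothesis (1), and invoke Theorem \ref{thm: BF nonstat NCIFS} together with $J_{\hat{\Phi}}=J_\Phi$, $Z_n^{\hat{\Phi}}(t)=Z_{\ell_n}^\Phi(t)$, and $\ell_n\geq n$ to get $B_\Phi\leq B_{\hat{\Phi}}=\HD(J_\Phi)$. You are slightly more careful than the paper in verifying that the Diameter Condition transfers to $\hat{\Phi}$ (using $\ell_n=O(n)$ and the uniform bound on $(\ell_{k+n}-\ell_k)/n$), a point the paper passes over without comment; otherwise the arguments are identical.
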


\begin{proof}
We endeavor to create a NCIFS $S$ by creating the alphabets at time $n$ given by
\begin{align*}
	E^{(1)}:=I^{\ell_1}\spand
	E^{(n)}:=I^{\ell_{n-1}+1,\ell_n} \quad\text{ for } n\geq 2.
\end{align*}
Since the incidence matrix $A^{(\ell_j)}$ consists of all 1's for each $j\in\NN$, this construction does in fact form a NCIFS. In fact, we claim that the first hypothesis implies that this construction forms a subexponentially bounded NCIFS. To see this we note that for $\ep>0$ and $n$ sufficiently large we have
\begin{align*}
	\#E^{(n)}&=\#I^{\ell_{n-1}+1,\ell_n}
	\leq \#I^{(\ell_{n-1}+1)}\cdots\#I^{(\ell_n)}\\
	&\leq e^{\ep(\ell_{n-1}+1)}\cdots e^{\ep\ell_n}
	=e^{\ep(\ell_{n-1}+1+\dots+\ell_n)}\\
	&=e^{\frac{\ep}{2}(\ell_n^2-\ell_{n-1}^2+\ell_n-\ell_{n-1})}.
\end{align*}
Thus taking a logarithm and dividing by $n$, we have
\begin{align*}
	\frac{1}{n}\log\#E^{(n)}&\leq\frac{\ep(\ell_n^2-\ell_{n-1}^2+\ell_n-\ell_{n-1})}{2n}
	=\frac{\ep(\ell_n^2-\ell_{n-1}^2)}{2n}+\frac{\ep(\ell_n-\ell_{n-1})}{2n}\\
	&\leq\frac{\ep(\ell_n^2-\ell_{n-1}^2)}{2n}+\frac{\ep(\ell_n^2-\ell_{n-1}^2)}{2n}
	\leq M\ep.
\end{align*}
Letting $\ep\to 0$ finishes the claim. Now we note that for each $n\in\NN$ we have
\begin{align*}	
	Z^{\Phi}_{\ell_n}(t)=Z^S_n(t).
\end{align*}
Thus we see that 
\begin{align*}
	\ul{P}^\Phi(t)\leq\ul{P}^S(t).
\end{align*}
Now we simply note that, since $J_\Phi=J_S$, we have that $\HD(J_\Phi)=\HD(J_S)$. Since $S$ is a subexponentially bounded system, Bowen's formula holds and so we have the inequality
\begin{align*}
	B_\Phi\leq B_S=\HD(J_S)=\HD(J_\Phi).
\end{align*}
As we already know that $B_\Phi\geq\HD(J_\Phi)$, we must in fact have that $B_\Phi=\HD(J_\Phi)$ as desired.
\end{proof}

\section{Bowen's Formula for Ascending NCGDMS}\label{Ascending}
In this section we introduce ascending NCGDMS, and show that Bowen's formula holds. Unlike in the previous section, we are able to remove both the subexponential boundedness and balancing conditions, however we still require that the system be finitely primitive. We also note that this class of systems extends all known results for finite and infinite NCIFS as we shall not require any restrictions on the alphabet size whatsoever, nor do we require that infinite systems be in class $\cM$. Furthermore, we will not even require that infinite systems be truly infinite, in that we allow for systems whose alphabets may be finite for some time and then infinite afterwards. 
\begin{definition}
	We say that a NCGDMS $\Phi$ is \emph{ascending} if the following hold.
	\begin{enumerate}
		\item $\cX^{(n)}\sub\cX^{(n+1)}$ for each $n\geq 0$.
		\item $I^{(n)}\sub I^{(n+1)}$ for each  $n\in\NN$.
		\item $\phi_i^{(n)}=\phi_{i}^{(n+1)}$ for each $i\in I^{(n)}$ and for each $n\in\NN$.
		\item $A^{(n)}_{ij}=A^{(n+1)}_{ij}$ for each $i,j\in I^{(n)}$ and for each $n\in\NN$.
	\end{enumerate}
\end{definition}
\begin{remark}
	Notice that if a $p$-finitely primitive NCGDMS is ascending then for any $n\in\NN$ and any $a\in I^{(n)}$ and $b\in I^{(n+j)}$ for any $j>p$ there is some word $\om$ such that $a\om b$ is admissible.
	
	In particular, since $I^{(1)}\sub I^{(j)}$ for every $j\in\NN$ we have that for each $a\in I^{(1)}$ and for each $n\geq p+1$ there is a word $\om$  with $\absval{\om}=n$ such that $a\om a$ is admissible. This property will allow us to construct an autonomous IFS.  
\end{remark}

\begin{remark}\label{rem: ascending GDMS implies ascending NIFS}
	Note that if $\Phi$ is an ascending NCGDMS such that the collections $\cX^{(n)}$ are a singleton for each $n\in\NN$ with $\cX^{(n)}=\cX^{(m)}$ for all $n,m\in\NN$ and each of the matrices $A^{(n)}$ consists only of ones, then we say that $\Phi$ is an ascending NCIFS. Unlike the graph directed setting, ascending NCIFS are, by definition, necessarily stationary. 
\end{remark}

\begin{theorem}\label{thm: BF for ascending NCGDMS}
	Suppose $\Phi$ is a finite ascending, $p$-finitely primitive NCGDMS. Then Bowen's formula holds.
\end{theorem}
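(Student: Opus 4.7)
The plan is to show the matching lower bound $\HD(J_\Phi)\geq B_\Phi$ by constructing, for each $n\in\NN$, an autonomous CGDMS subsystem $\Phi^{[n]}$ of $\Phi$, applying the autonomous version of Bowen's formula to each, and then comparing pressures across the sequence. For each $n$ define $\Phi^{[n]}$ to be the autonomous CGDMS with alphabet $I^{(n)}$, maps $\{\phi_i^{(n)}\}_{i\in I^{(n)}}$, and incidence matrix $A^{(n)}$ restricted to $I^{(n)}\times I^{(n)}$. Ascending conditions $(3)$ and $(4)$ make this well-defined and consistent with $\Phi$ at any later time (for letters drawn from $I^{(n)}$). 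Moreover $\Phi^{[n]}$ inherits the Bounded Distortion, Uniform Contraction, Open Set, Geometry, and Uniform Cone Conditions from $\Phi$, so Bowen's formula for finite autonomous CGDMS \cite{mauldin_graph_2003} yields $\HD(J_{\Phi^{[n]}})=B_{\Phi^{[n]}}$.

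The lower bound $\HD(J_\Phi)\geq B_{\Phi^{[n]}}$ for each $n$ follows because, by the ascending property, any admissible infinite word of $\Phi^{[n]}$ (letters in $I^{(n)}$, matrix $A^{(n)}$) reads as an admissible infinite word of $\Phi$ starting at time $n$ under coincident maps. Thus $J_{\Phi^{[n]}}\subseteq J_n$, the limit set of $\Phi$ beginning at time $n$. Using $p$-finite primitivity of $\Phi$ to prepend an admissible word $\om\in I^{n-1}$ ending at each relevant vertex $v$ gives $\phi_\om(J_{\Phi^{[n]}}\cap X^{(n-1)}_v)\subseteq J_\Phi$; bi-Lipschitz invariance of Hausdorff dimension and the finiteness of the family of prefixes $I^{n-1}$ then yield $\HD(J_\Phi)\geq \HD(J_{\Phi^{[n]}})=B_{\Phi^{[n]}}$.

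For the matching inequality $\sup_n B_{\Phi^{[n]}}\geq B_\Phi$, fix $s<B_\Phi$, so $\ul{P}(s)>0$ and $Z_k(s)\geq e^{ck}$ for some $c>0$ and all $k$ large. By ascending, every admissible $\om\in I^k$ has all letters in $I^{(k)}$, coincident maps, and admissibility in $\Phi^{[k]}$ (the latter via iterated matrix consistency from condition $(4)$), so $Z_k(s)\leq Z^{[k]}_k(s)$ and therefore $Z^{[k]}_k(s)\geq e^{ck}$. Invoking $p$-finite primitivity of $\Phi$ at time $k$ together with Bounded Distortion, exactly as in the supermultiplicativity calculation at the start of the proof of Theorem \ref{thm:LBforHDJv}, yields
\[
	Z^{[k]}_{m+k+p}(s)\;\geq\; K^{-4s}Q^s\, Z^{[k]}_m(s)\, Z^{[k]}_k(s)\;\geq\; K^{-4s}Q^s e^{ck}\, Z^{[k]}_m(s)
\]
for every $m\in\NN$. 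Choosing $k$ large enough that $K^{-4s}Q^s e^{ck}>1$ and iterating, we conclude that $Z^{[k]}_{j(k+p)}(s)\to\infty$ as $j\to\infty$, whence $P_{\Phi^{[k]}}(s)>0$ and $s<B_{\Phi^{[k]}}$. Since $s<B_\Phi$ was arbitrary, $\sup_n B_{\Phi^{[n]}}\geq B_\Phi$.

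Combining with Lemma \ref{lem:HDlessB} gives $B_\Phi\leq \sup_n B_{\Phi^{[n]}}\leq \HD(J_\Phi)\leq B_\Phi$, so Bowen's formula holds. The main obstacle in carrying this out rigorously lies in the pressure comparison: one must verify both the admissibility transfer $Z_k(s)\leq Z^{[k]}_k(s)$ and the supermultiplicativity estimate for $\Phi^{[k]}$, both of which require carefully unpacking how ascending condition $(4)$ makes matrix entries consistent across time and how the $p$-finite primitivity of $\Phi$ survives when restricted to the autonomous subsystem on $I^{(k)}$. The virtue of the ascending hypothesis is precisely that these transfers go through without any balancing condition or growth-rate hypothesis on the alphabets or derivatives.
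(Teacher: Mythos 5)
Your reduction to autonomous subsystems is the right general instinct, and your lower-bound step (prepending a finite admissible prefix to embed a bi-Lipschitz copy of $J_{\Phi^{[n]}}$ into $J_\Phi$) is fine. The genuine gap is in the pressure comparison, at exactly the point you flag as "the main obstacle": $p$-finite primitivity of $\Phi$ does \emph{not} survive restriction to the frozen autonomous system $\Phi^{[k]}$ on the alphabet $I^{(k)}$. The connecting family $\Lambda_k$ is a subset of $I^{k+1,k+p}$, so the connectors joining $a,b\in I^{(k)}$ use letters from $I^{(k+1)},\dots,I^{(k+p)}$, which in an ascending system are strictly larger alphabets; nothing forces these connectors to have letters in $I^{(k)}$ or to be admissible for $A^{(k)}$. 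Consequently the claimed supermultiplicativity $Z^{[k]}_{m+k+p}(s)\geq K^{-4s}Q^s Z^{[k]}_m(s)Z^{[k]}_k(s)$ is unjustified, and it is precisely what you need: for an autonomous system the sequence $\log Z^{[k]}_n(s)$ is subadditive, so the pressure is an infimum over $n$, and the single estimate $Z^{[k]}_k(s)\geq e^{ck}$ (which is what the transfer from $Z_k(s)$ buys you) gives no lower bound on $P_{\Phi^{[k]}}(s)$. Without internal connectors, $\Phi^{[k]}$ need not be finitely irreducible at all — its transition structure on $I^{(k)}$ could fail to support long admissible words, in which case $B_{\Phi^{[k]}}$ tells you nothing about $B_\Phi$ — and for the same reason the citation of Bowen's formula for $\Phi^{[k]}$ from \cite{mauldin_graph_2003} (which assumes finite irreducibility) is not directly available. (A smaller shared caveat: your transfer $Z_k(s)\leq Z^{[k]}_k(s)$ uses persistence of transitions whose second letter is brand new at its time step, which the literal statement of ascending condition $(4)$ does not cover; the paper's own argument needs the same reading, so I do not count this against you.)

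The paper's proof of Theorem \ref{thm: BF for ascending NCGDMS} avoids this trap by not freezing the alphabet at a fixed time. Instead, for each $\ell>p$ it builds an autonomous \emph{IFS} $S_\ell$ whose letters are admissible blocks $\om\lm\in I^{\ell+p}$ of the non-autonomous system: $\om\in I^{\ell}$ begins with a single letter $a_\ell\in I^{(1)}$ chosen to carry a definite proportion (at least $1/\#I^{(1)}$) of $Z_\ell(t)$, and $\lm\in\Lambda_{a_\ell}^*\sub\Lambda_\ell$ is a finite-primitivity connector closing the block up so that it can be followed by $a_\ell$ again. The ascending property is used only to shift such blocks forward in time (maps and allowed transitions among already-present letters persist), so arbitrary concatenations of blocks are admissible in $\Phi$, giving $J_{S_\ell}\sub J_\Phi$, while the block structure yields two-sided pressure estimates sandwiching $\frac{1}{\ell+p}P^{S_\ell}(t)$ against $\ul{P}(t)$ up to errors of order $\ep+\frac{t\log K}{\ell+p}$. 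If you want to salvage your argument, replacing "restrict to $I^{(k)}$" by this block construction is the natural fix; as written, the step from $s<B_\Phi$ to $s<B_{\Phi^{[k]}}$ does not go through.
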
 
\begin{proof}
	Let $\ep> 0$ and $\ell\in\NN$ with $\ell>p$. Since the system is $p$-finitely primitive, for each $a\in I^{(1)}$, the set 
	\begin{align*}
		\Lambda_a^*=\set{\lm\in\Lambda_\ell:\om\lm a\in I^{\ell+p+1}, \om\in I^\ell, \om_1=a}
	\end{align*} 
	is nonempty. Then we are able to estimate the partition function so that we have 
	\begin{align*}
	Z_\ell(t)&=\sum_{\om\in I^\ell}\norm{D\phi_\om}^t
	=\sum_{a\in I^{(1)}}\sum_{\mathclap{\substack{\om\in I^{\ell}\\\om_1=a}}}\norm{D\phi_\om}^t
	\leq \#I^{(1)}\sum_{\mathclap{\substack{\om\in I^\ell\\\om_1=a_\ell}}}\norm{D\phi_\om}^t
	\end{align*}
	where $a_\ell\in I^{(1)}$, which, depending on $\ell$, maximizes the sum 
	\begin{align*}
	Z_\ell(t;(a,\spot)):=\sum_{a\in I^{(1)}}\sum_{\mathclap{\substack{\om\in I^{\ell}\\\om_1=a}}}\norm{D\phi_\om}^t.
	\end{align*}	
	Continuing our estimate we have 
	\begin{align*}
	Z_\ell(t)&\leq \frac{\#I^{(1)}}{\#\Lambda_{a_\ell}^*\cdot Q^t}\cdot \sum_{\mathclap{\substack{\om\in I^{\ell}\\\om_1=a_\ell}}}\norm{D\phi_\om}^t\cdot\sum_{\lm\in \Lambda^*_{a_\ell}}\norm{D\phi_{\lm}}^t
	= \frac{\#I^{(1)}}{\#\Lambda_{a_\ell}^*\cdot Q^t}\cdot \sum_{\mathclap{\substack{\om\in I^\ell\\ \om_1=a_\ell\\ \lm\in \Lambda_{a_\ell}^*}}} \norm{D\phi_{\om}}^t\cdot\norm{D\phi_{\lm}}^t\\
	&\leq\frac{\#I^{(1)}K^t}{\#\Lambda_{a_\ell}^*\cdot Q^t}\cdot \sum_{\mathclap{\substack{\om\in I^\ell\\\om_1=a_\ell\\ \lm\in \Lambda_{a_\ell}^*}}} \norm{D\phi_{\om\lm}}^t
	\leq \frac{\#I^{(1)}K^d}{Q^d}\cdot \sum_{\mathclap{\substack{\om\in I^\ell\\ \om_1=a_\ell\\ \lm\in \Lambda_{a_\ell}^*}}} \norm{D\phi_{\om\lm}}^t.
	\end{align*}
	As $\frac{\#I^{(1)}K^d}{Q^d}$ is some constant greater than 1 we can choose $\ell$, depending on $\ep$, sufficiently large such that the following hold.
	\begin{itemize}
		\item $Z_\ell(t)\geq e^{\ell(\ul{P}(t)-\frac{\ep}{2})}$.
		\item $\frac{\#I^{(1)}K^d}{Q^d}\leq e^{(\frac{\ell\ep}{2})}$.
	\end{itemize}
	In that case we have that
	\begin{align*}	
	\sum_{\mathclap{\substack{\om\in I^\ell\\ \om_1=a_\ell\\ \lm\in \Lambda^*}}} \norm{D\phi_{\om\lm}}^t&\geq \frac{Q^d}{\#I^{(1)}K^d} Z_\ell(t)\geq e^{\ell(\ul{P}(t)-\ep)}.
	\end{align*}
	Now for each $\ell\in\NN$, we define the alphabet 
	\begin{align*}
		E_\ell=\set{\gm\in I^{\ell+p}:\gm=\om\lm, \om\in I^\ell,\om_1=a_\ell,\lm\in \Lambda_{a_\ell}^*}.	
	\end{align*}
	Noting that $X_{i(a_\ell)}^{(0)}=X^{(0)}_{i(\gm)}=X_{t(\gm)}^{(\ell+p)}=X_{i(a_\ell)}^{(\ell+p)}$ for each $\gm\in E_\ell$, we see that 
	\begin{align*}
		S_\ell=\set{\phi_e: X_{i(a_\ell)}^{(0)}\to X_{i(a_\ell)}^{(0)}}_{e\in E_\ell}	
	\end{align*}
	forms an autonomous IFS for each $\ell>p$. Denote the partition and pressure functions associated with $S_\ell$ by $Z_m^{S_\ell}(t)$ and $P^{S_\ell}(t)$ respectively. 
	Then clearly we have
	\begin{align*}
	Z_m^{S_\ell}(t)&\leq Z_{m(\ell+p)}(t). 
	\end{align*}
	For $\gm\in E_\ell^m:=E_\ell\times\dots\times E_\ell$, the $m$-fold Cartesian product of $E_\ell$, we write $\gm=e_1e_2\dots e_m$, and thus we have  
	\begin{align*}
	Z_m^{S_\ell}(t)&=\sum_{\gm\in E_\ell^m}\norm{D\phi_\om}^t\\
	&\geq K^{-mt}\cdot\sum_{e_1,\dots,e_m\in E_\ell}\norm{D\phi_{e_1}}^t\cdots \norm{D\phi_{e_m}}^t\\
	&=K^{-mt}\cdot \sum_{e_1\in E_\ell}\norm{D\phi_{e_1}}^t\cdots \sum_{e_m\in E_\ell}\norm{D\phi_{e_m}}^t\\
	&=K^{-mt}\cdot\left(\sum_{e\in E_\ell}\norm{D\phi_e}^t\right)^m
	=K^{-mt}\cdot\left(\,\sum_{\mathclap{\substack{\om\in I^\ell\\ \om_1=a_\ell \\ \lm\in \Lambda_{a_\ell}^*}}}\norm{D\phi_{\om\lm}}^t\right)^m\\
	&\geq K^{-mt}\cdot\left(e^{\ell(\ul{P}(t)-\ep)}\right)^m
	=K^{-mt}\cdot e^{m\ell(\ul{P}(t)-\ep)}.
	\end{align*}
	Thus we have the combined inequality 
	\begin{align*}
	K^{-mt}\cdot e^{m\ell(\ul{P}(t)-\ep)}\leq Z^{S_\ell}_m(t)\leq Z_{m(\ell+p)}(t).
	\end{align*}
	As this inequality holds for all $m$ and all $\ell$ sufficiently large, taking logarithms and dividing by $m(\ell+p)$ gives
	\begin{align*}
	\frac{\ell}{\ell+p}\ul{P}(t)-\frac{\ell\ep}{\ell+p}-\frac{t\log K}{\ell+p}\leq \frac{1}{m(\ell+p)}\log Z^{S_\ell}_m(t)\leq \frac{1}{m(\ell+p)}\log Z_{m(\ell+p)}(t).
	\end{align*}
	Hence we must have
	\begin{align*}
		\frac{\ell}{\ell+p}\ul{P}(t)-\frac{\ell\ep}{\ell+p}-\frac{t\log K}{\ell+p}\leq \frac{1}{\ell+p}P^{S_\ell}(t)\leq \ul{P}(t).
	\end{align*}
	Noting that $\frac{\ell\ep}{\ell+p}<\ep$ we must also have
	\begin{align*}
		\frac{\ell}{\ell+p}\ul{P}(t)-\ep-\frac{t\log K}{\ell+p}\leq \frac{1}{\ell+p}P^{S_\ell}(t)\leq \ul{P}(t).
	\end{align*} 
	Since $S_\ell$ is an autonomous IFS, Bowen's formula holds, and so, $\HD(J_{S_\ell})=B_{S_\ell}$. Denote Bowen's parameter, $B_{S_\ell}$, for the system $S_\ell$ by $B_\ell$.
	
	Notice that letting $\ep\to 0$ we have that $\ell\to\infty$, thus for $t<B_\Phi$ and $\ep$ sufficiently small, we have $\frac{\ell}{\ell+p}\ul{P}(t)-\ep-\frac{t\log K}{\ell}>0$ and hence we have that $P^{S_\ell}(t)>0$ as well. Thus $t<B_\ell$, and as the right hand inequality gives that $B_\ell\leq B_\Phi$, we must have that $t<B_\ell\leq B_\Phi$. Letting $t\to B_\Phi$, and simultaneously $\ep\to 0$ and $\ell\to\infty$, we see that  
	\begin{align*}
		\limit{\ell}{\infty}B_\ell=B_\Phi.
	\end{align*} 
	Since $J_{S_\ell}\sub J_\Phi$ we have that 
	\begin{align*}
		\limit{\ell}{\infty}\HD(J_{S_\ell})\leq \HD(J_\Phi).
	\end{align*}
	Thus we have 
	\begin{align*}
		B_\Phi=\limit{\ell}{\infty}B_\ell=\limit{\ell}{\infty}\HD(J_{S_\ell})\leq \HD(J_\Phi),
	\end{align*}
	which, in light of the fact that $\HD(J_\Phi)\leq B_\Phi$, finishes the proof.	
\end{proof}

\begin{remark}
	It is worth pointing out that the technique used to create an autonomous IFS subsystem of the ascending NCGDMS can also be modified to create a NCIFS subsystem, $S$, of a subexponentially bounded and $p$-finitely primitive NCGDMS, $\Phi$, that is not necessarily ascending.  In this case we have that 
	\begin{align*}
	\HD(J_S)\leq\HD(J_\Phi).
	\end{align*}
	In fact, letting $\cA$$=\set{(a_n)_{n\in\NN}:a_j\in I^{(jp+1)}\text{ for }j=0,1,\dots}$ and letting $S_a$ be the NCIFS created along the sequence $a=(a_n)_{n\in\NN}$ we have that 
	\begin{align*}
	\sup_{a\in\cA}\HD(J_{S_a})\leq\HD(J_\Phi).
	\end{align*} 
	
\end{remark}

\begin{corollary}\label{cor: BF for ascending NIFS by NGDMS}
	In light of Remark \ref{rem: ascending GDMS implies ascending NIFS} we see that Bowen's formula holds for all finite ascending NCIFS as a corollary to Theorem \ref{thm: BF for ascending NCGDMS}. 
\end{corollary}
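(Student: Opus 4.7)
The plan is to observe that the corollary is essentially a direct specialization of Theorem \ref{thm: BF for ascending NCGDMS} to the NCIFS setting, so there is almost nothing to do beyond unpacking definitions. By Remark \ref{rem: ascending GDMS implies ascending NIFS}, a finite ascending NCIFS $\Phi$ is precisely a finite ascending NCGDMS whose vertex sets $V_n$ are all singletons with a common compact space $X$, and whose incidence matrices $A^{(n)}$ consist entirely of $1$'s. In particular every letter at time $n+1$ is allowed to follow every letter at time $n$, so $\Phi$ is automatically $0$-finitely primitive (as noted in the remark immediately after the definition of finite primitivity).

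With these observations in hand, I would simply invoke Theorem \ref{thm: BF for ascending NCGDMS} with $p = 0$, since all of its hypotheses are satisfied: $\Phi$ is finite, $\Phi$ is ascending, and $\Phi$ is $p$-finitely primitive. The conclusion of the theorem is exactly that $\HD(J_\Phi) = B_\Phi$, which is Bowen's formula. Thus the proof is a one-line deduction and there is no genuine obstacle, the work having already been done inside Theorem \ref{thm: BF for ascending NCGDMS}.

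If one wanted to record what the theorem's construction produces in this special case, the autonomous IFS subsystem $S_\ell$ built in the proof of Theorem \ref{thm: BF for ascending NCGDMS} simplifies: since $p=0$, the sets $\Lambda_{a_\ell}^*$ may be taken to be trivial, the alphabet $E_\ell$ becomes the set of admissible words of length $\ell$ beginning with a fixed symbol $a_\ell \in I^{(1)}$, and $S_\ell$ is an autonomous IFS on the single space $X$. The convergence $B_\ell \to B_\Phi$ and the inclusions $J_{S_\ell} \subset J_\Phi$ then yield $B_\Phi \leq \HD(J_\Phi)$, and Lemma \ref{lem:HDlessB} supplies the opposite inequality. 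Hence $\HD(J_\Phi) = B_\Phi$, as claimed.
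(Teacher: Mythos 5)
Your proposal is correct and matches the paper's intent exactly: the corollary is stated without a separate proof precisely because, via Remark \ref{rem: ascending GDMS implies ascending NIFS}, a finite ascending NCIFS is a finite ascending NCGDMS with full incidence matrices, hence $0$-finitely primitive, so Theorem \ref{thm: BF for ascending NCGDMS} applies verbatim. Your additional remarks about how the construction of $S_\ell$ degenerates when $p=0$ are accurate but not needed.
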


\begin{remark}	
	Notice that neither of the previous results mentioned the size nor the growth rates of the alphabets $I^{(n)}$. So in particular, these theorems apply for all finite NCGDMS and NCIFS which are not necessarily subexponentially bounded, extending the results of \cite{rempe-gillen_non-autonomous_2016}. Previously, Bowen's formula was only known for finite NCIFS which were subexponentially bounded and for infinite systems in class $\cM$. 
\end{remark}
In the proof of Theorem \ref{thm: BF for ascending NCGDMS} above, for each $\ell$ we were able to generate a finite autonomous iterated function system of size $\ell$. But we can generate an autonomous iterated function system from an ascending NCIFS in a different way. Consider the alphabet 
\begin{align*}
I_\infty:=\union_{n\geq 1} I^{(n)}.
\end{align*} 
Note that this alphabet may be either finite or infinite, depending on the number of unique functions in the system. Then $\Phi_\infty=\left(\phi_i\right)_{i\in I_\infty}$ is an autonomous conformal iterated function system generated by the ascending NCIFS $\Phi$.

Using this fact we have an alternate proof of Corollary \ref{cor: BF for ascending NIFS by NGDMS} for finite NCIFS. However, by modifying the main idea of the proof of the previous theorem in view of the previous comment about generating an infinite autonomous IFS from a NCIFS, we are able to get an extension to infinite NCIFS which are not necessarily of class \cM.
\begin{corollary}\label{cor: HD of ascending = to infinite generated}
	Suppose $\Phi$ is an ascending NCIFS, finite or infinite, which is not necessarily subexponentially bounded nor an element of class \cM. If $S_\infty$ is the autonomous IFS generated by $\Phi$ then 
	\begin{align*}
	\HD(J_\Phi)=\HD(J_{S_\infty}).
	\end{align*}
\end{corollary}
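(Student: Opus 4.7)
The plan is to establish the equality by proving two separate inclusions, each of which is essentially a direct consequence of the ascending structure together with a standard fact about infinite autonomous conformal IFS.

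For the easy direction $\HD(J_\Phi)\le\HD(J_{S_\infty})$, I would show directly that $J_\Phi\sub J_{S_\infty}$. Since $\Phi$ is an NCIFS, every infinite word $\om=(\om_n)_{n\ge 1}$ with $\om_n\in I^{(n)}$ is admissible, and the ascending hypothesis $I^{(n)}\sub I_\infty$ guarantees that every such $\om$ is \emph{a fortiori} admissible for the autonomous system $S_\infty$. Because $\phi_i^{(n)}$ is independent of $n$ once $i\in I^{(n)}$, the two projection maps $\pi_\Phi$ and $\pi_{S_\infty}$ agree on every such sequence, and the inclusion $J_\Phi\sub J_{S_\infty}$ follows at once, giving the desired inequality of dimensions.

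For the reverse inequality $\HD(J_{S_\infty})\le\HD(J_\Phi)$, I would appeal to the standard fact from the theory of infinite autonomous conformal iterated function systems (see \cite{mauldin_dimensions_1996}) that
\[\HD(J_{S_\infty})=\sup\set{\HD(J_{S_F}):F\sub I_\infty\text{ finite}},\]
so it suffices to show $\HD(J_{S_F})\le\HD(J_\Phi)$ for every finite $F\sub I_\infty$. Given such $F$, choose $N\in\NN$ so large that $F\sub I^{(N)}$; such $N$ exists because $F$ is finite and the alphabets are ascending. Fix any $a\in I^{(1)}$, so that $a\in I^{(j)}$ for every $j\ge 1$. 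Then for each $\om\in F^\infty$ the word $\hat\om:=(\underbrace{a,\dots,a}_{N-1},\om_1,\om_2,\dots)$ is admissible for $\Phi$, since $\hat\om_j=a\in I^{(j)}$ for $j<N$ and $\hat\om_j=\om_{j-N+1}\in F\sub I^{(N)}\sub I^{(j)}$ for $j\ge N$.

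Because the maps agree across time steps on their common alphabets, a short computation with the defining limit for the coding map yields
\[\pi_\Phi(\hat\om)=\phi_a^{N-1}\bigl(\pi_{S_F}(\om)\bigr),\]
where $\phi_a^{N-1}$ denotes the $(N-1)$-fold iterate of the autonomous map $\phi_a$. Therefore $\phi_a^{N-1}(J_{S_F})\sub J_\Phi$. Since $\phi_a$ is a $C^1$ conformal diffeomorphism on a neighborhood of the compact set $J_{S_F}$, its iterate $\phi_a^{N-1}$ is bi-Lipschitz there and hence preserves Hausdorff dimension, yielding $\HD(J_{S_F})=\HD(\phi_a^{N-1}(J_{S_F}))\le\HD(J_\Phi)$. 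Taking the supremum over all finite $F\sub I_\infty$ completes the proof. There is essentially no obstacle beyond invoking the finite-subsystem approximation theorem, which holds unconditionally for infinite autonomous conformal IFS and does not require subexponential growth or membership in class $\cM$; this is exactly why the corollary sidesteps the restrictions of Theorems \ref{thm: BF nonstat NCIFS}--\ref{thm: BF nonstat EV}, and everything else is just the bookkeeping by which the ascending structure allows one to interchange $\Phi$-words and $S_\infty$-words.
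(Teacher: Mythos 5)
Your proof is correct and, while it shares the same overall skeleton as the paper's argument (both directions follow from $J_\Phi\sub J_{S_\infty}$ plus the Mauldin--Urba\'nski finite-subsystem approximation $\HD(J_{S_\infty})=\sup_F\HD(J_{S_F})$ from Theorem 3.15 of \cite{mauldin_dimensions_1996}), the way you dispose of the reverse inequality $\HD(J_{S_F})\le\HD(J_\Phi)$ is genuinely different and more self-contained. The paper routes this step through the auxiliary autonomous systems $S_\ell$ built in the proof of Theorem~\ref{thm: BF for ascending NCGDMS} and asserts $J_{S_F}\sub J_{S_\ell}$ for $\ell$ large, then invokes $\limsup_\ell\HD(J_{S_\ell})=\HD(J_\Phi)$; that inclusion is not obvious, since the $S_\ell$-limit set is constrained by the recurring letter $a_\ell$ at fixed positions, so some care (or a bi-Lipschitz comparison similar to yours) is really needed there. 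Your argument avoids this entirely: you choose $N$ with $F\sub I^{(N)}$, prepend $N-1$ copies of a fixed $a\in I^{(1)}$ to each $\om\in F^\infty$, observe that the resulting word is $\Phi$-admissible by the ascending property, and deduce $\pi_\Phi(\hat\om)=\phi_a^{N-1}(\pi_{S_F}(\om))$ so that $\phi_a^{N-1}(J_{S_F})\sub J_\Phi$; since $\phi_a^{N-1}$ is a conformal diffeomorphism on a neighborhood of the compact set $X$ and hence bi-Lipschitz there, Hausdorff dimension is preserved and $\HD(J_{S_F})\le\HD(J_\Phi)$ follows. This is cleaner, does not lean on the pressure estimates from the preceding theorem, and makes completely explicit where the ascending hypothesis is used. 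Everything in your write-up checks out.
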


\begin{proof}
	
	Since $J_\Phi\sub J_{S_\infty}$ we have that $\HD(J_\Phi)\leq \HD(J_{S_\infty})$. Now to see the opposite inequality we recall that from Theorem 3.15 of \cite{mauldin_dimensions_1996} we have 
	\begin{align*}
	\HD(J_{S_\infty})=\sup\set{\HD(J_{S_F}):F\sub I_\infty, F \text{ finite}}
	\end{align*} 
	where $S_F$ is the finite autonomous IFS subsystem of $S_\infty$ considering only some finite alphabet $F\sub I_\infty$. If $I_\infty$ is finite then the above $\sup$ is actually the $\max$.
	
	But for each $F\sub I_\infty$ we can find $N_F\in\NN$ large enough such that $F\sub\cup_{1\leq j\leq n} I^{(j)}$ for each $n\geq N_F$. Thus taking $\ell>N_F$ and constructing the finite autonomous IFS $S_\ell$ as in the proof of the previous theorem we have that $J_{S_F}\sub J_{S_\ell}$, which of course implies that $\HD(J_{S_F})\leq\HD(J_{S_\ell})$. 
	
	Since we can find $\ell$ sufficiently large such that this holds for any finite $F\sub I_\infty$ we then must have that 
	\begin{align*}
	\HD(J_{S_\infty})=\sup\set{\HD(J_{S_F}):F\sub I_\infty, F \text{ finite}} \leq \limsup_{\ell\to\infty}\HD(J_{S_\ell})=\HD(J_\Phi),
	\end{align*}
	which completes the proof.
\end{proof}

\section{Examples and an Application to Conformal Dynamics}\label{Sec: Examples}
While our construction may seem a bit abstract, we assert that natural examples are abound. One such example comes from the theory of continued fractions. 
\begin{example}
	For each $n\in\NN$ let $I^{(n)}\sub\NN$ and consider the maps 
	\begin{align*}
		\phi_{b}^{(n)}:[0,1]\to [0,1]
	\end{align*} 
	defined by 
	\begin{align*}
		\phi_b^{(n)}(x)=\frac{1}{b+x}
	\end{align*}
	for each $b\in I^{(n)}$ and $n\in\NN$. Then these maps form a stationary NCIFS. By restricting which entries at time $n$ are allowed to follow which entries at time $n-1$, the system become a stationary NCGDMS.  
	
	For more on the uses of autonomous iterated function systems and graph directed Markov systems in the theory of continued fraction expansions see, for example, \cite{mauldin_dimensions_1996}, \cite{mauldin_conformal_1999},  \cite{mauldin_graph_2003}, etc.
\end{example}

\begin{example}
	As stated, Corollary \ref{cor: HD of ascending = to infinite generated} provides a method for finding the dimension of an ascending NCIFS, but from an alternate perspective, Corollary \ref{cor: HD of ascending = to infinite generated} shows that one can always create a whole family of examples of NCIFS for which the dimension is already known. Indeed, given any infinite autonomous IFS, one can select alphabets $I^{(n)}$ to form a stationary NCIFS. Furthermore, if they are chosen in such a way as to make the non-autonomous system ascending, then the dimension of the autonomous limit set is the same as for the non-autonomous system.  	
\end{example}

We now provide an application to conformal dynamics. In particular we provide an application of NCGDMS in their simplest form, stationary NCIFS, in order to find a general lower bound for the Hausdorff dimension of the Julia set of a non-autonomous dynamical system composed of elliptic functions. The following is based on the work of Kotus and Urba\'nski in \cite{kotus_hausdorff_2003}. A similar treatment for the random setting appears in \cite{roy_random_2011}.

Let $f_0:\CC\to\ol{\CC}$ be an elliptic function, i.e. a doubly periodic meromorphic function, which precisely means that there are two periods $\rho_1,\rho_2\in\CC$ such that for all $z\in\CC$ and $s,t\in\ZZ$ we have 
\begin{align*}
f_0(z)=f_0(z+s\rho_1+t\rho_2).
\end{align*}
As such, the periodic points of $f_0$ form a lattice in $\CC$ and thus there is a parallelogram $\cR$, which tessellates the plane, for which $f_0(\cR)=\hat{\CC}$. This parallelogram is then called a \textit{fundamental parallelogram} of $f_0$. Of course $\cR$ is not unique, and there are in fact infinitely many fundamental parallelograms of $f_0$. One such parallelogram is given by
\begin{align*}
\cR=\set{t_1\rho_1+t_2\rho_2:0\leq t_1,t_2\leq 1}.
\end{align*}
Now for each pole $b$ of $f_0$ we let $q_b$ denote its multiplicity and define 
\begin{align*}
q:=\max\set{q_b:b\in f_0^{-1}(\infty)}=\max\set{q_b:b\in f_0^{-1}(\infty)\intersect\cR}.
\end{align*}
We let $\Crit(f_0)$ denote the set of critical points of $f_0$. Now since $f_0$ is an elliptic function, $f_0$ has only finitely many critical points, and as such there is $R_0>0$ such that $\Crit(f_0)\sub B(0,R_0)$.
Now for given sequences $\seq{\lm_n},\seq{c_n}\sub\CC$, we let $f_n$ be the affine perturbation of $f_0$ defined by 
\begin{align*}
	f_n(z)=\lm_n\cdot f_0(z)+c_n.
\end{align*}
Define the non-autonomous map $F:=F_{\lm,c}:\CC\to\ol{\CC}$ whose iterates are given by 
\begin{align*}
	F^n_{\lm,c}(z)=f_n\circ\dots\circ f_1(z)
\end{align*}  
for each $n\in\NN$. 
We define the non-autonomous Fatou and Julia sets, respectively, in the usual way by taking $\cF(F)$ to be the set of points in $\CC$ for which the iterates $(F^n)$ form a normal family on some neighborhood, and $\jl(F)$ to be the complement of $\cF(F)$. 
Note that the functions $f_n$ share the same collections of periodic points, poles, and critical points as the  function $f_0$.  
\begin{theorem}
	Let $f_0$ be an elliptic function. Then there are $\ep,\dl>0$ such that if $\lm_n, \lm_n^{-1}\in B(1,\dl)$ and $c_n\in B(0,\ep)$ for all $n\in\NN$ then 
	\begin{align*}
		\HD(\jl(F))\geq \frac{2q}{q+1}.
	\end{align*}
\end{theorem}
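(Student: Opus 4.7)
The plan is to adapt the autonomous argument of Kotus-Urba\'nski \cite{kotus_hausdorff_2003} to the non-autonomous setting by constructing a stationary NCIFS $\Phi$ whose limit set lies in $\jl(F_{\lm,c})$ and whose single-step partition functions $Z_{(n)}(t)$ diverge for every $t<\frac{2q}{q+1}$, and then applying the lower-bound inequality $\theta_N\le \HD(J_\Phi)$ from Section \ref{Sec: Pressure}.

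First, fix a pole $b\in\cR$ of $f_0$ of maximum multiplicity $q$, so that $f_0(z)=A(z-b)^{-q}+O((z-b)^{-q+1})$ for some $A\in\CC\setminus\{0\}$; by double periodicity the same expansion holds at every lattice translate $b+\ell$ with $\ell\in\Lambda:=\ZZ\rho_1+\ZZ\rho_2$. I would choose $\dl,\ep$ small enough that for every admissible $(\lm_n,c_n)$ the perturbed $f_n=\lm_n f_0+c_n$ keeps $b+\ell$ as an order-$q$ pole with leading coefficient $\lm_nA$ uniformly comparable to $A$, and the critical values of $f_n$ remain in a small fixed neighborhood of $f_0(\Crit(f_0))$. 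Next, fix a compact disc $X\sub\CC$ disjoint from these critical values, and form a stationary NCIFS $\Phi$ whose alphabet $I^{(n)}$ at time $n$ indexes holomorphic inverse branches of a suitable iterate $F^N_{\lm,c}$ mapping $X$ into itself. Concretely, near each pole $b+\ell$ the map $f_n$ is a $q$-to-$1$ covering of a neighborhood of $\infty$, so composing $N$ such local inverse branches produces, for each tuple of outermost lattice direction $\ell\in\Lambda$ and local root choices indexed by $j$, an inverse branch $\phi_{\ell,j}^{(n)}\colon X\to U_{\ell,j}^{(n)}\sub X$. The NCGDMS axioms---conformality, bounded distortion via Koebe, uniform contraction, OSC from disjointness of preimage regions, geometry, cone, and diameter conditions---are routine for holomorphic contractions on a disc, and finite primitivity is automatic for an NCIFS. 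Forward invariance combined with non-normality of $\{F^n_{\lm,c}\}$ at each $z\in J_\Phi$ then yields $J_\Phi\sub\jl(F_{\lm,c})$.

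For the partition function, differentiating the pole expansion gives
\begin{align*}
|(\phi_{\ell,j}^{(n)})'(y)|\;\asymp\;|y|^{-(q+1)/q}
\end{align*}
for a single-step inverse branch at a point $y$ near $b+\ell$; telescoping this through the $N$-fold composition, the outermost lattice direction dominates and yields $\|D\phi_{\ell,j}^{(n)}\|\asymp|\ell|^{-(q+1)/q}$ up to uniformly bounded factors in $n$. Hence
\begin{align*}
Z_{(n)}(t)\;\ge\;\const\cdot\sum_{\ell\in\Lambda,\;|\ell|>L}|\ell|^{-t(q+1)/q},
\end{align*}
and this planar-lattice sum diverges exactly when $t(q+1)/q\le 2$, i.e., $t\le \frac{2q}{q+1}$. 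Thus $Z_{(n)}(t)=\infty$ uniformly in $n$ for every $t<\frac{2q}{q+1}$, so $\theta_N\ge \frac{2q}{q+1}$, and the lower-bound inequality of Section \ref{Sec: Pressure} together with $J_\Phi\sub\jl(F_{\lm,c})$ yields $\HD(\jl(F_{\lm,c}))\ge\frac{2q}{q+1}$.

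The main obstacle is the NCIFS construction itself: the double periodicity of $f_0$ forces single-step inverse branches at different lattice translates to have essentially equal derivatives, so no naive one-step construction can yield divergence at the correct exponent. Passing to an $N$-fold composition lets the derivative inherit the lattice position of an intermediate preimage; verifying OSC for this enlarged alphabet, ensuring that enough composed branches have images inside the fixed compact $X$, and checking that the resulting limit set genuinely lies in the non-autonomous Julia set are the delicate technical points. The perturbation parameters $\dl,\ep$ must then be chosen small enough that all the implied constants---leading coefficients at the poles, critical-value locations, contraction and distortion bounds---are uniform in $n$.
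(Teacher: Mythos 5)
Your construction and pole-counting are in the spirit of the paper's proof (and of Kotus--Urba\'nski), but the final inference is where the argument breaks. You pass to an \emph{infinite} stationary NCIFS indexed by lattice translates, show $Z_{(n)}(t)=\infty$ for $t<\frac{2q}{q+1}$, and then invoke ``$\theta_N\le \HD(J_\Phi)$ from Section \ref{Sec: Pressure}.'' No such inequality is established there: \eqref{thetaN lower bound ineq} only gives $\theta_N\le\theta_\Phi\le B_\Phi$, and by Lemma \ref{lem:HDlessB} the Bowen dimension bounds $\HD(J_\Phi)$ from \emph{above}, so divergence of the single-step partition functions by itself says nothing about the Hausdorff dimension of your limit set. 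To convert $\theta_N\ge\frac{2q}{q+1}$ into a dimension bound you need the lower half of Bowen's formula for the system you built, and for infinite NCIFS the paper only provides this for systems in class $\cM$ or evenly varying systems (Theorems \ref{thm: BF nonstat class M} and \ref{thm: BF nonstat EV}), neither of which you verify or even cite. Your uniform comparability claim $\norm{D\phi_{\ell,j}^{(n)}}\asymp|\ell|^{-(q+1)/q}$ with constants independent of $n$ would in fact make the system evenly varying, so the gap is patchable, but as written the decisive step is unjustified.

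The paper sidesteps infinite-system theory altogether, and this is the cleaner route: since the lattice sum $\sum_b|b|^{-t\frac{q+1}{q}}$ diverges for $t\le\frac{2q}{q+1}$, one may choose a \emph{finite} set $I$ of poles of maximal multiplicity with $\sum_{b\in I}|b|^{-t\frac{q+1}{q}}\ge 2K^2$, and build the stationary NCIFS from two-fold inverse branches $\phi_b^{(n)}=f_{2n-1,a,b,1}^{-1}\circ f_{2n,b,a,1}^{-1}$ anchored at one fixed pole $a$ (this also resolves the ``equal derivatives at lattice translates'' issue you raise: the outer branch based near $b$ contributes the factor $|b|^{-\frac{q+1}{q}}$, so no longer compositions are needed). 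The resulting system is uniformly finite, hence subexponentially bounded, so Bowen's formula holds by Theorem \ref{thm: BF nonstat NCIFS}; then $Z_n(t)\gtrsim K^{-nt}Z_{(1)}(t)\cdots Z_{(n)}(t)\gtrsim 2^n$ gives $\ul{P}(t)>0$, hence $\HD(\jl(F))\ge\HD(J_\Phi)\ge t$ for every $t\le\frac{2q}{q+1}$. Either fix---citing the evenly varying/class $\cM$ results for your infinite system, or extracting a finite subsystem as above---is needed to close your argument.
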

\begin{proof}

	Let $B_R=\set{z\in\ol{\CC}:|z|>R}$, and for each pole $b$ of $f_0$ let $B_b(R)$ denote the connected component of $f_0^{-1}(B_R)$ which contains $b$. Recall that there exists $R_0>0$ such that $\Crit(f_0)\sub B(0,R_0)$. Now if $R>R_0$, then $B_R$ contains no critical values of $f_0$ which implies that the sets $B_b(R)$ are simply connected and mutually disjoint. For each pole $b$, there is a holomorphic function $G_b:B_b(R)\to \CC$ taking values in a neighborhood of 0 such that 
	\begin{align*}
	f_0(z)=\frac{G_b(z)}{(z-b)^{q_b}}.
	\end{align*}
	This fact immediately implies that there is some constant $L>0$ such that 
	\begin{align}\label{diam less than LR}
		\diam(B_b(R))\leq LR^{-\frac{1}{q_b}}\leq LR^{-\frac{1}{q}}
	\end{align}
	for each pole $b$. Choose $S<R_0$, and let $R_1>R_0$ sufficiently large such that 
	\begin{align}\label{f_0 diam leq S}
		LR^{-\frac{1}{q}}<S
	\end{align}
	for all $R\geq R_1$. For $U\sub B_R\bs\set{\infty}$ open and simply connected, the holomorphic inverse branches
	\begin{align*}
	f_{0,b,U,1}^{-1},\dots,f_{0,b,U,q_b}^{-1}
	\end{align*}  
	of $f_0$ are all well-defined on $U$ for each $1\leq j\leq q_b$. For all $z\in U$ we have 
	\begin{align*}
	\absval{(f_{0,b,U,j}^{-1})'(z)}\comp\absval{z}^{-\frac{q_b+1}{q_b}}.
	\end{align*}
	In light of \eqref{diam less than LR} and \eqref{f_0 diam leq S}, we see that for any two poles $b_1,b_2\in B_{2R_2}$ we have 
	\begin{align*}
		f_{0,b_2,b_1,j}^{-1}(B(b_1,R_0))\sub B_{b_2}(2R_2-R_0)\sub B_{b_2}(R_2)\sub B(b_2,S)\sub B(b_2,R_0).
	\end{align*}
	Consider the series 
	\begin{align*}
		\sum_{b\in B_{2R_2}\cap f_0^{-1}(\infty)}\absval{b}^{-r}.
	\end{align*}
	This series converges if and only if $r>2$. Inserting $r=t\frac{q+1}{q}$, we see that the series
	\begin{align*}
		\sum_{b\in B_{2R_2}\cap f_0^{-1}(\infty)}\absval{b}^{-t\frac{q+1}{q}}
	\end{align*}
	converges if and only if $t>2q/(q+1)$. Thus, for each $t\leq 2q/(q+1)$ there is some $N_t\in\NN$ such that 
	\begin{align}\label{sum over I'}
		\sum_{b\in I'}\absval{b}^{-t\frac{q+1}{q}}\geq 2K^2,
	\end{align} 
	where $I'\sub B_{2R_2}\intersect f_0^{-1}(\infty)$ finite with $\#I'=N_t$. 
	
	Fix a pole $a\in B_{2R_2}$ with $q_a=q$ and a finite subset $I\sub B_{2R_2}\intersect f_0^{-1}(\infty)$ with $q_b=q$ for each $b\in I$ and $\#I=N_t$ such that \eqref{sum over I'} holds summing over $I$. Now choose $\ep,\dl>0$ such that the following hold for all $b\in I$.
	\begin{itemize}
		\item $\dl<\frac{R_0-2S}{2S}$.
		\item $(1+\dl)\left(\dl(1+\absval{b})+ S\right)<\frac{R_0}{2}$.
		\item $\ep<\dl$.
	\end{itemize}
	To see that such a $\dl>0$ exists, we note that $\dl<\frac{R_0-2S}{2S}$ implies that $(1+\dl)S<R_0/2$. So, in particular, we have 
	\begin{align*}
		(1+\dl)\left(\dl(1+\absval{b})+ S\right)<\dl^2(1+\absval{b})+\dl(1+\absval{b})+\frac{R_0}{2}<R_0.
	\end{align*}
	Thus we see that we simply need to solve the quadratic inequality 
	\begin{align*}
		\dl^2+\dl<\frac{R_0}{2(1+M)}, 
	\end{align*} 
	where $M:=\max_{b\in I}\absval{b}$. However, we see that 
	\begin{align*}
		0<\dl<\frac{-1+\sqrt{1+2R_0(1+M)^{-1}}}{2}
	\end{align*}
	satisfies such an inequality. Now let $\lm:=\seq{\lm_n}$ and $c:=\seq{c_n}$ be sequences in $\CC$ such that $0\neq\lm_n,\lm_n^{-1}\in B(1,\dl)$ and $\absval{c_n}\leq \ep<\dl$ for each $n\in\NN$. Furthermore, for each $n\in\NN$ we define the functions
	\begin{align*}
		f_n:=\lm_n\cdot f_0+c_n.
	\end{align*}  
	In particular, by our choice of $\dl$ we have that for each $b\in I$ and $w\in B(b,S)$
	\begin{align}\label{S ball contained in R_0 ball}
		\frac{w-c_n}{\lm_n}\in B(b, R_0)	
	\end{align}
	for each $n\in \NN$. Indeed, we have that 
	\begin{align*}
		\absval{\frac{w-c_n}{\lm_n}-b}&\leq\absval{\lm_n^{-1}}\left(\absval{c_n}+\absval{w-b}+\absval{1-\lm_n}\absval{b}\right)\\
		&\leq(1+\dl)\left(\ep+S+\dl\absval{b}\right)\\
		&\leq(1+\dl)\left(\dl(1+\absval{b})+S\right),
	\end{align*}
	which is less than $R_0$ by our choice of $\ep$ and $\dl$. Now for each $b\in I$, fix inverse branches 
	\begin{align*}
	f_{0,b,a,1}^{-1}:\ol{B}(a,R_0)\to \ol{B}(b,S) \spand f_{0,a,b,1}^{-1}:\ol{B}(b,R_0)\to \ol{B}(a,S)
	\end{align*} 
	of $f_0$. 
	In view of \eqref{S ball contained in R_0 ball}, we see that for each $n\in\NN$ the inverse branches
	\begin{align*}
		f_{n,b,a,1}^{-1}:\ol{B}(a,S)\to \ol{B}(b,S) \spand f_{n,a,b,1}^{-1}:\ol{B}(b,S)\to \ol{B}(a,S)
	\end{align*}
	are well defined and given by 
	\begin{align*}
		f_{n,\spot,\spot,1}^{-1}(w)=f_{0,\spot,\spot,1}^{-1}\left(\frac{w-c_n}{\lm_n}\right).
	\end{align*}
	Now let 
	\begin{align*}
		\phi_b^{(n)}:=f_{2n-1,a,b,1}^{-1}\circ f_{2n,b,a,1}^{-1}:\ol{B}(a,S)\to\ol{B}(a,S)	
	\end{align*}
	for each $b\in I$. Setting $\Phi^{(n)}=\set{\phi_b^{(n)}:b\in I}$, then $\Phi=\set{\Phi^{(n)}}_{n\in\NN}$ forms a stationary NCIFS. 		
	Let $J_\Phi$ be the limit set of the NCIFS $\Phi$ given by
	\begin{align*}
	J_\Phi=	\intersect_{n\geq 0}\union_{\om\in I^n}\phi_\om\left(\ol{B}(a, S)\right).
	\end{align*}
	Then by construction we have that $J_\Phi\sub\jl(F)$ as for each $z\in J_\Phi$ and $\om\in I^\infty$
	\begin{align*}
	\absval{D\phi_{\om\rvert_n}^n(z)}\to 0 \quad \text{ as } n\to\infty.
	\end{align*}
	As $\Phi$ is uniformly finite, we have that Bowen's formula holds, and thus that 
	\begin{align*}
		\HD(J_\Phi)\leq\HD(\jl(F)).
	\end{align*}
	Thus in order to estimate a lower bound of the value on the right hand side it suffices to estimate a lower bound for $\HD(J_\Phi)$. In view of \eqref{thetaN lower bound ineq} it suffices to estimate $\ta_\Phi$. Recall that 
	\begin{align*}
	Z_{(n)}(t)=\sum_{b\in I}\norm{D\phi_b^{(n)}}^t.
	\end{align*} 
	Now we notice that 
	\begin{align*}
	Z_{(n)}(t)
	\comp \sum_{b\in I}\absval{a}^{-\frac{q+1}{q}t}\absval{b}^{-\frac{q+1}{q}t}
	\comp\sum_{b\in I}\absval{b}^{-\frac{q+1}{q}t}>2K^2.
	\end{align*}
	Now given that 
	\begin{align*}
		Z_n(t)\gtrsim K^{-nt}Z_{(1)}(t)\cdots Z_{(n)}(t)\gtrsim K^{n(2-t)}2^n\gtrsim 2^n,
	\end{align*}
	we see that $\ul{P}(t)>0$ and thus $\HD(J_\Phi)\geq t$. But as $t\leq \frac{2q}{q+1}$ was arbitrary, we have that 
	\begin{align*}
		\HD(J_\Phi)\geq \frac{2q}{q+1},
	\end{align*}
	which completes the proof.
\end{proof}
\begin{remark}
	In fact, the proof actually shows that the hyperbolic dimension of $F$ is at least $\frac{2q}{q+1}$. 
\end{remark}
In a forthcoming paper we present more general applications of NCGDMS, in their full generality to non-autonomous conformal dynamics, as well as results that generalize the previous theorem.

\bibliographystyle{jabbrv_plain}
\bibliography{mybib} 

\begin{thebibliography}{10}

\bibitem{bowen_hausdorff_1979}
Rufus Bowen.
\newblock Hausdorff dimension of quasi-circles.
\newblock {\em\JournalTitle{Publications Math\'ematiques de l'IH\'ES}},
  50:11--25, 1979.

\bibitem{chousionis_conformal_2016}
Vasilis Chousionis, Jeremy~T. Tyson, and Mariusz Urba\'nski.
\newblock Conformal graph directed {Markov} systems on {Carnot} groups.
\newblock {\em\JournalTitle{arXiv preprint arXiv:1605.01127}}, 2016.

\bibitem{edgar_multifractal_1992}
G.~A. Edgar and R.~Daniel Mauldin.
\newblock Multifractal decompositions of digraph recursive fractals.
\newblock {\em\JournalTitle{Proceedings of the London Mathematical Society.
  Third Series}}, 65(3):604--628, 1992.

\bibitem{falconer_fractal_1990}
Kenneth Falconer.
\newblock {\em Fractal geometry}.
\newblock John Wiley \& Sons, Ltd., Chichester, 1990.

\bibitem{hutchinson_fractals_1981}
John Hutchinson.
\newblock Fractals and self similarity.
\newblock {\em\JournalTitle{Indiana University Mathematics Journal}}, 30, 1981.

\bibitem{kotus_hausdorff_2003}
Janina Kotus and Mariusz Urba\'nski.
\newblock Hausdorff dimension and {H}ausdorff measures of {J}ulia sets of
  elliptic functions.
\newblock {\em\JournalTitle{Bulletin of the London Mathematical Society}},
  35(02):269--275, March 2003.

\bibitem{mauldin_dimensions_1996}
R.~Daniel Mauldin and Mariusz Urba\'nski.
\newblock Dimensions and measures in infinite iterated function systems.
\newblock {\em\JournalTitle{Proceedings of the London Mathematical Society}},
  3(1):105--154, 1996.

\bibitem{mauldin_conformal_1999}
R.~Daniel Mauldin and Mariusz Urba\'nski.
\newblock Conformal {Iterated} {Function} {Systems} with {Applications} to the
  {Geometry} of {Continued} {Fractions}.
\newblock {\em\JournalTitle{Transactions of the American Mathematical
  Society}}, 351(12):4995--5025, 1999.

\bibitem{mauldin_graph_2003}
R.~Daniel Mauldin and Mariusz Urba\'nski.
\newblock {\em Graph directed {Markov} systems geometry and dynamics of limit
  sets}.
\newblock Cambridge University Press, Cambridge, 2003.

\bibitem{mauldin_hausdorff_1988}
R.~Daniel Mauldin and S.~C. Williams.
\newblock Hausdorff dimension in graph directed constructions.
\newblock {\em\JournalTitle{Transactions of the American Mathematical
  Society}}, 309(2):811--829, 1988.

\bibitem{moran_additive_1946}
P.~A.~P. Moran.
\newblock Additive functions of intervals and {Hausdorff} measure.
\newblock {\em\JournalTitle{Proceedings of the Cambridge Philosophical
  Society}}, 42:15, 1946.

\bibitem{rempe-gillen_non-autonomous_2016}
Lasse Rempe-Gillen and Mariusz Urba\'nski.
\newblock Non-autonomous conformal iterated function systems and {Moran}-set
  constructions.
\newblock {\em\JournalTitle{Transactions of the American Mathematical
  Society}}, 368(3):1979--2017, 2016.

\bibitem{roy_random_2011}
Mario Roy and Mariusz Urba\'nski.
\newblock Random graph directed {Markov} systems.
\newblock {\em\JournalTitle{Discrete Contin. Dyn. Syst}}, 30(1):261--298, 2011.

\end{thebibliography}

\end{document}